\documentclass[preprint,nopreprintline,10.5pt]{elsarticle}

\usepackage{setspace}

\usepackage[top=1in, bottom=1in, left=1in, right=1in]{geometry}

\usepackage{graphicx}
\usepackage{epstopdf}
\usepackage{amsmath}
\usepackage{booktabs}
\usepackage{multirow}
\usepackage{natbib}
\usepackage{amssymb}
\usepackage{algorithm}
\usepackage{algcompatible}
\usepackage{algpseudocode}
\usepackage{amsthm}
\usepackage{enumitem}

\usepackage{subcaption}
\usepackage[flushleft]{threeparttable}

\usepackage{lineno}
\usepackage{xcolor}
\usepackage[colorinlistoftodos]{todonotes}

\usepackage{enumitem}
\newlist{todolist}{itemize}{2}
\setlist[todolist]{label=$\square$}
\usepackage{pifont}

\usepackage{xfrac}
\usepackage{acronym}
\acrodef{gan}[GAN]{\emph{GAN}}
\acrodef{bc}[BC]{\emph{BC}}
\acrodef{rbc}[RBC]{\emph{RBC}}

\biboptions{sort,compress} 

\usepackage{xcolor}

\newcommand\norm[1]{\left\lVert#1\right\rVert}

\newcommand*\patchAmsMathEnvironmentForLineno[1]{%
  \expandafter\let\csname old#1\expandafter\endcsname\csname #1\endcsname
  \expandafter\let\csname oldend#1\expandafter\endcsname\csname end#1\endcsname
  \renewenvironment{#1}%
     {\linenomath\csname old#1\endcsname}%
     {\csname oldend#1\endcsname\endlinenomath}}%
\newcommand*\patchBothAmsMathEnvironmentsForLineno[1]{%
  \patchAmsMathEnvironmentForLineno{#1}%
  \patchAmsMathEnvironmentForLineno{#1*}}%
\AtBeginDocument{%
\patchBothAmsMathEnvironmentsForLineno{equation}%
\patchBothAmsMathEnvironmentsForLineno{align}%
\patchBothAmsMathEnvironmentsForLineno{flalign}%
\patchBothAmsMathEnvironmentsForLineno{alignat}%
\patchBothAmsMathEnvironmentsForLineno{gather}%
\patchBothAmsMathEnvironmentsForLineno{multline}%
}

\usepackage{graphicx}
\usepackage{amssymb}
\usepackage{amsthm}
\usepackage{bbm}
\usepackage{bm}
\usepackage{lineno}
\usepackage{url}
\usepackage{listings}
\usepackage[colorlinks=true]{hyperref}
\newtheorem{theorem}{Theorem}[]
\newtheorem{remark}{Remark}[section]
\newtheorem{lemma}[theorem]{Lemma}
\newtheorem{proposition}[theorem]{Proposition}
\newtheorem{Observation}{Key Observation}[section]

\usepackage{color,soul}
\usepackage{mathtools}

\definecolor{lightblue}{rgb}{.90,.95,1}
\definecolor{darkgreen}{rgb}{0,.5,0.5}

\definecolor{lightgreen}{rgb}{.90,1,0.90}

\usepackage{gensymb}
\usepackage{array}
\usepackage{changes}
\usepackage{multirow}
\usepackage{enumerate}
\newcolumntype{P}[1]{>{\centering\arraybackslash}m{#1}}

\newcolumntype{L}[1]{>{\raggedright\let\newline\\\arraybackslash\hspace{0pt}}m{#1}}
\newcolumntype{C}[1]{>{\centering\let\newline\\\arraybackslash\hspace{0pt}}m{#1}}
\newcolumntype{R}[1]{>{\raggedleft\let\newline\\\arraybackslash\hspace{0pt}}m{#1}}

\newcommand{\vect}[1]{\boldsymbol{#1}}

\graphicspath{ {./figs/} }

\usepackage{changes}
\definechangesauthor[name={Reviewer 1}, color = blue]{R1}
\definechangesauthor[name={Reviewer 2}, color = red]{R2}
\definechangesauthor[name={All reviewers}, color = brown]{All}
\definechangesauthor[name={Editor}, color = purple]{Editor}
\definechangesauthor[name={Author}, color = olive]{Author}

\usepackage{accents}

\makeatletter
  \renewcommand{\ALG@name}{Key Procedures}
\makeatother

\linespread{1.05}

\begin{document}

\begin{frontmatter}

\title{High-order Nodal Space-time Flux Reconstruction Methods for Hyperbolic Conservation Laws on Curvilinear Moving Grids}
\author[umbc]{Meilin Yu\corref{mycor}}

\cortext[mycor]{Corresponding author}
\ead{mlyu@umbc.edu}

\address[umbc]{Department of Mechanical Engineering, \\
University of Maryland, Baltimore County (UMBC), Baltimore, MD 21250}

\begin{abstract}
High-order nodal space-time flux reconstruction (STFR) methods have been developed to solve hyperbolic conservation laws on curvilinear moving grids. Unlike the method-of-lines approach for moving domain simulation, the grid velocity is implicitly embedded into the curvilinear geometric representation of space-time elements. Several key issues in moving domain simulation, including the discrete geometric conservation law (GCL), solution and flux approximation, and aliasing error control, are discussed in the context of the nodal STFR framework. Conditions and the corresponding numerical strategies to reduce aliasing errors due to the curvilinear space-time representation of moving domain problems, including the discrete GCL errors (i.e. one type of aliasing errors in the space-time framework), are then explained and examined. Since a space-time tensor product is used to construct the FR formulation in this study, all space-time schemes show the temporal superconvergence property, similar to that presented by the implicit Runge–Kutta discontinuous Galerkin (IRK-DG) schemes, in moving domain simulation. Specifically, a nominal $k$th order scheme can achieve a ($2k-1$)th order superconvergence rate when solutions on $k$ Gauss--Legendre points are used to construct polynomials in the time dimension.  The robustness of temporal superconvergence in the existence of aliasing errors induced by the curvilinear space-time representation, and upon de-aliasing operations based on polynomial filtering, has been examined with numerical experiments.  
      
\end{abstract}

\begin{keyword}
Space-time Method \sep Flux Reconstruction \sep Moving Domain \sep Curvilinear Space-time Grid \sep Superconvergence \sep Projection-based Polynomial Filtering
\end{keyword}
\end{frontmatter}

\setcounter{page}{1}


    
\section{Introduction}
High-order computational fluid dynamics (CFD) methods~\cite{Wang_EtAl2013} have achieved remarkable progress in recent years, enabling accurate simulation of multi-scale and multi-physics flow phenomena across a wide range of applications, including atmospheric dynamics, turbomachinery, plasma flows, and bio-inspired aerodynamics. 
Numerical simulation of fluid flows over moving domains presents significant challenges in the development of high-order CFD methods based on the arbitrary Lagrangian-Eulerian (ALE) formulation~\cite{DONEA_EtAl_CMAME_1982}. 
Extensive research has been conducted to improve high-order CFD methods for moving domain simulations~\cite{FARHAT_CMAME_2004,PerssonEtAl_CMAME_2009,MAVRIPLIS_JCP_2011,Yu2011,LiangEtAl_JCP2014,KOPRIVA_EtAl_CF_2016,Yu2016,ABE_EtAl_CF_2016}. A central focus in these efforts is to enforce the discrete geometric conservation law (GCL) in mapping from the time-dependent domain to a fixed reference domain in the ALE method. 
Therein, high-order accuracy is typically limited to spatial discretization, and the method-of-lines approach is then adopted for time integration. 


The space-time formulation provides a consistent numerical treatment of both spatial and temporal discretizations, and can resolve GCL automatically to the level of the numerical resolution of the schemes used in the simulation. 
Although some research has been devoted to advancing space-time methods, such as the space-time discontinuous Galerkin (DG) method~\cite{VANDERVEGT_JCP_2002,KLAIJ_EtAl_JCP_2006,Petersen_EtAl_IJNME_2009,Feistauer_Cesenek_2011,Wang_Persson_CF_2015,Corrigan_EtAl_IJNMF_2019,LUO_EtAl_JCP_2021} and the space-time streamline-upwind/Petrov-Galerkin (SUPG) method~\cite{HUGHES_CMAME_1988, TEZDUYAR_EtAl_CMAME_1992, KocherBause_JSC_2014}, further work is required to fully exploit the numerical potential of space-time methods for moving-domain simulations. As is well known, the efficiency of explicit Runge–Kutta time-discretization schemes suffers from Butcher barriers (see \textbf{Theorem} 324A--C in~\cite{Butcher2002}) when the order of accuracy exceeds four. Even some popular implicit Runge–Kutta (IRK) schemes, such as the explicit first stage, single diagonally implicit
Runge–Kutta (ESDIRK) method \cite{BIJL_EtAl_JCP_2002}, need to use more stages than the order in their design. Moreover, non-trivial efforts are needed to preserve their formal order of accuracy when extending time integration methods established on stationary grids to moving domain simulation~\cite{FARHAT_CMAME_2004}. Thus, understanding the convergence rate property of space-time methods, especially in the time dimension for moving domain simulation with large grid deformation, has practical importance. 
Another interesting feature of the space-time method for moving domain simulation is that the grid velocity, which is needed in the ALE formulation, does not explicitly show up in space-time schemes. Instead, it becomes the slope of the space-time element along its time dimension. Thus, the moving domain simulation problem becomes that of space-time simulation using general curvilinear elements, and discrete GCL errors become aliasing errors associated with the curvilinear space-time geometric representation.
Therefore, we need to understand how the general curvilinear space-time element representation of moving grids affects simulation accuracy, and how to effectively control the associated numerical errors, especially aliasing errors. Some previous works~\cite{PerssonEtAl_CMAME_2009,YU201470,AbeEtAl_JCP_2015,CICCHINO_EtAl_JCP_2022} on high-order simulation with curvilinear grids in the context of the method of lines have provided insights on possible solutions.  
 
In this study, a  high-order nodal tensor-product space-time flux reconstruction (STFR) method is developed for moving domain simulation. 
We mention that an implicit space-time method with right Radau points in the time dimension has been developed by Huynh~\cite{Huynh_AIAA_2013} for conservation laws. Recently, Huynh~\cite{Huynh_JSC_2023} showed the equivalence between the DG-type discretization methods of ordinary differential equations (ODEs) and several IRK methods, including Radau IA, Radau IIA~\cite{Ehle_SJMA_1973}, and DG-Gauss~\cite{Bottasso_ANM_97,Tang_Sun_AMC_12} with the assistance of the flux reconstruction (FR) concept. Note that FR was originally developed by Huynh~\cite{huynh2007,huynh2009} for compact spatial discretization of partial differential equations (PDEs), and is a generalization of various types of discontinuous finite/spectral element methods, such as DG~\cite{Cockburn_DG_1989,bassi1997,Hesthaven_Warburton_08}, spectral difference~\cite{Kopriva:1996,Wang_SD_2006}, and spectral volume~\cite{Wang_SV_2002}. The FR method for spatial discretization has been well established; see some fundamental contributions in~\cite{wang2009,vincent2011} and a review~\cite{Huynh_EtAl_CF_14} that summarized FR-related numerical algorithm developments and applications by 2014. In~\cite{Yu_Space_Time_2017}, we developed a nodal STFR method based on Gauss--Legendre points to solve hyperbolic conservation laws on stationary grids. Temporal superconvergence with a convergence rate of $2k-1$ for a degree $k-1$ (i.e. a nominal $k$th local order of accuracy) polynomial construction in the time dimension was observed. 
According to~\cite{Huynh_JSC_2023}, in the time dimension the STFR scheme based on Gauss--Legendre points is equivalent to the IRK DG-Gauss scheme, which also has a superconvergence rate of $2k-1$ for a $k$ stage construction. In~\cite{Yu_Space_Time_Moving_2024}, we extended the STFR method for moving domain simulation with linear space-time grids, and discussed its connection with the dynamic grid formulation based on the method of lines. In this work, we developed a rigorous mathematical foundation for the nodal tensor-product STFR method for moving domain simulation with general curvilinear space-time grids.

The remainder of the paper is organized as follows. In \textbf{Sect.}~\ref{sec:num_method}, the basic idea of the nodal tensor-product STFR method on general curvilinear moving grids is explained. In \textbf{Sect.}~\ref{sec:GCL}, the connection between the general curvilinear space-time element representation of moving/deformable grids and GCL is discussed. It reveals the space-time numerical resolution requirement to enforce GCL automatically. Following that, requirements on the solution and flux approximation for curvilinear space-time grids are discussed in \textbf{Sect.}~\ref{sec:SFA}. Therein, different solution and flux approximation methods are discussed, and a new prospective is presented on how to design numerical schemes that satisfy the discrete GCL on general curvilinear space-time grids. Space-time projection-based polynomial filtering is then introduced to control aliasing errors associated with the  STFR schemes. In \textbf{Sect.}~\ref{sec:results}, numerical experiments are conducted to examine key numerical features of the nodal tensor-product STFR method for moving domain simulation with curvilinear space-time grids. Finally, \textbf{Sect.}~\ref{sec:con_FW} concludes this study, and points out potential future research directions.

\section{Numerical Methods}
\label{sec:num_method}
Consider the general hyperbolic conservation law in conservation form,
\begin{equation} \label{eq:GovEq}
   \frac{\partial \vect{Q}}{\partial t} + \nabla \cdot \mathbf{F} (\vect{Q}) = 0,
\end{equation}
defined on $\Omega \times [0,T)$ with the spatial domain $\Omega$ bounded by $\partial \Omega$, and $T$ as the physical time covered by numerical simulation. 
Herein,
$\vect{Q}$ is the vector of conserved variables, $\mathbf{F}$ is the spatial flux tensor, and $\nabla$ is the gradient operator in the physical domain. For a $d$-dimensional spatial system with $d=1, 2, \text{or} \ 3$, $\vect{Q}$ is represented as $\vect{Q} = \left(Q_1, \ldots, Q_{N_v} \right)$, where $N_v$ is the number of solution variables (i.e. conserved variables $Q_i$, $i=1,\ldots,N_v$, in this study).
The flux tensor is written as 
$ \mathbf{F} = \left[\vect{F}_1 \ldots \ \vect{F}_d \right]$, where $\vect{F}_i$, $i=1, 
\ldots, d$, is the spatial flux in the $i$-th dimension.
%
%
%
Let $\vect{x}=(x_1, \ldots, x_d)$ be the spatial coordinates. 
Now we introduce the space-time domain $\Omega^{\text{st}}=\Omega \times [0,T)$ with the boundary $\partial \Omega^{\text{st}}$,  and the space-time gradient operator $\nabla^{\text{st}}=\left(
\partial_t, \partial_{x_1}, \ldots, \partial_{x_d} 
\right)$. Eq.~\eqref{eq:GovEq} can then be written in the space-time domain as
\begin{equation} \label{eq:GovEq_ST}
     \nabla^{\text{st}} \cdot \mathbf{F}^{\text{st}} (\vect{Q}) = 0,
\end{equation}
where $\mathbf{F}^{\text{st}}$ is the space-time flux tensor, defined as
$ \mathbf{F}^{\text{st}} = \left[\vect{Q} \ \vect{F}_1 \ldots \ \vect{F}_d \right]$. Note that as a naming convention, bold non-italic symbols, such as $\mathbf{F}^{\text{st}}$, denote tensors of dimension greater than one, bold italic symbols, such as $\vect{Q}$, denote vectors, and italic non-bold symbols, such as $Q_i$, denote scalars.

To numerically solve Eq.~\eqref{eq:GovEq_ST}, the space-time domain $\Omega^{\text{st}}$ can be divided into $N_e^{\text{st}}$ non-overlapping space-time elements $\Omega_e^{\text{st}}$, $e=1,  \ldots, N_e^{\text{st}}$, with the element boundary denoted as $\partial \Omega_e^{\text{st}}$. 
Two approaches, namely the so-called physical domain approach and reference domain approach (see a complete discussion in \textbf{Sect.}~\ref{subsec:Implementation}), can then been used to implement Eq.~\eqref{eq:GovEq_ST}. 
Next we use the reference domain approach to further explain the FR formulation for three-dimensional (3D) tensor-product space-time elements with two dimensions in space and one dimension in time. 

\subsection{Tensor-product space-time FR} \label{subsubsec:STFR}
Consider the hyperbolic conservation law in 2D Cartesian space with $\vect{F}_1 = \vect{F}$ and $\vect{F}_2 = \vect{G}$, 
\begin{equation} \label{eq:GovEq_2D}
    \frac{\partial \vect{Q}}{\partial t} + \frac{\partial \vect{F}}{\partial x} + \frac{\partial \vect{G}}{\partial y} = 0,
\end{equation}
and transform the governing equation from the physical domain $(t,x,y)$ to the reference (computational) domain $(\tau,\xi,\eta)$ with $\tau, \xi,\eta \in [-1,1]$ as
\begin{equation} \label{eq:GovEq_2D_Comp}
    \frac{\partial \widetilde{\vect{Q}}}{\partial \tau} + 
    \frac{\partial \widetilde{\vect{F}}}{\partial \xi} 
    +
    \frac{\partial \widetilde{\vect{G}}}{\partial \eta} = 0,
\end{equation}
with
\begin{equation}
\Biggl\{
\begin{array}{lcl}
\widetilde{\vect{Q}} = |J| \tau_t \vect{Q} + |J| \tau_x \vect{F} + |J| \tau_y \vect{G} \\
\widetilde{\vect{F}} = |J| \xi_t \vect{Q} + |J| \xi_x \vect{F} + |J| \xi_y \vect{G} \\
\widetilde{\vect{G}} = |J| \eta_t \vect{Q} + |J| \eta_x \vect{F} + |J| \eta_y \vect{G}
\end{array}. 
\label{eq:ST_Var_ref}
\end{equation}
Herein, $J$ is the Jacobian matrix for the space-time coordinate transformation, which takes the form:
\begin{eqnarray}
J=\frac{\partial(t,x,y)}{\partial(\tau,\xi,\eta)} =
\left (
\begin{array}{ccc}
      t_\tau & t_\xi & t_\eta  \\
      x_\tau & x_\xi & x_\eta \\
      y_\tau & y_\xi & y_\eta 
\end{array} \right ).
\label{eq:Jacob}
\end{eqnarray}
The so-called Jacobian $|J|$ is the determinant of the Jacobian matrix $J$.
The inverse transformation of Eq.~\eqref{eq:Jacob} must exist for a non-singular transformation, which can be related to the Jacobian matrix $J$ as:
\begin{eqnarray}
J^{-1}=\frac{\partial(\tau,\xi,\eta)}{\partial(t,x,y)} =
\left (
\begin{array}{ccc}
      \tau_t & \tau_x & \tau_y  \\
      \xi_t  & \xi_x  & \xi_y \\
      \eta_t & \eta_x & \eta_y 
\end{array} \right ).
\label{eq:Jacob_inv}
\end{eqnarray}
The metrics $t_\tau$, $t_\xi$, $t_\eta$, $x_\tau$, $x_\xi$, $x_\eta$, $y_\tau$, $y_\xi$, and $y_\eta$ within each space-time element $\Omega_e^{\text{st}}$ can be calculated when a specific coordinate transformation $t(\tau,\xi,\eta)$, $x(\tau,\xi,\eta)$, and $y(\tau,\xi,\eta)$ is given, and $\tau_t$, $\tau_x$, $\tau_y$, $\xi_t$, $\xi_x$, $\xi_y$, $\eta_t$, $\eta_x$, and $\eta_y$ can then be determined by evaluating $J^{-1}$ correspondingly. 
Two examples of high-order curvilinear tensor-product space-time elements for grids with a general motion and the associated coordinate transformations are presented in Figure~\ref{fig:ST_Elements}. Note that the curvilinear space-time elements are represented with polynomials along each dimension. To facilitate implementation, it is required that the polynomial degrees along each spatial dimension are the same, and they can be different from the polynomial degree along the temporal dimension. Since the geometric features of the space-time elements are always known (e.g. grid locations at any time are known when a prescribed motion is used  or grid motion needs to be specified and iterated when two-way coupled fluid-structure interaction is performed), Gauss-Lobatto points along each dimension are used to describe the geometry of the high-order curvilinear space-time elements. 

\begin{figure}[!htbp]
  \centering
  \subfloat[2D Space-Time Element]{\includegraphics[width=0.45\textwidth]{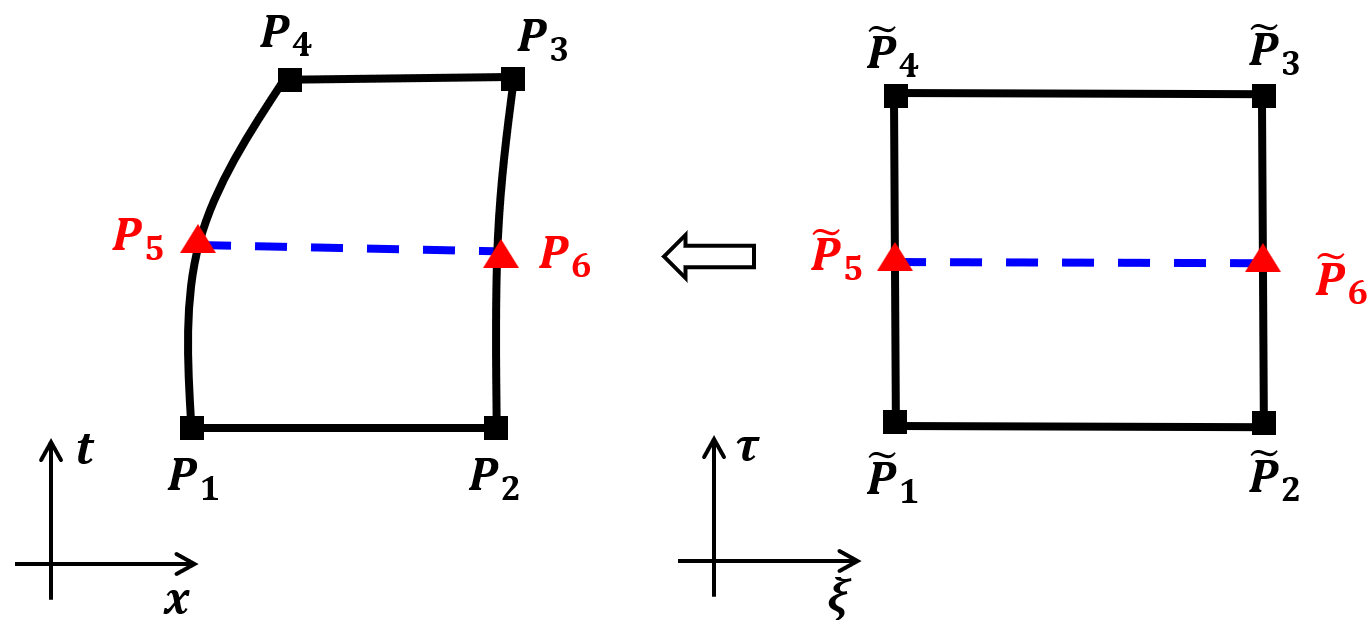}
  \label{fig:ST_Elements_2D}
  }\hspace{0.2em}
\hspace{0.2em}
  \subfloat[3D Space-Time Element]{\includegraphics[width=0.45\textwidth]{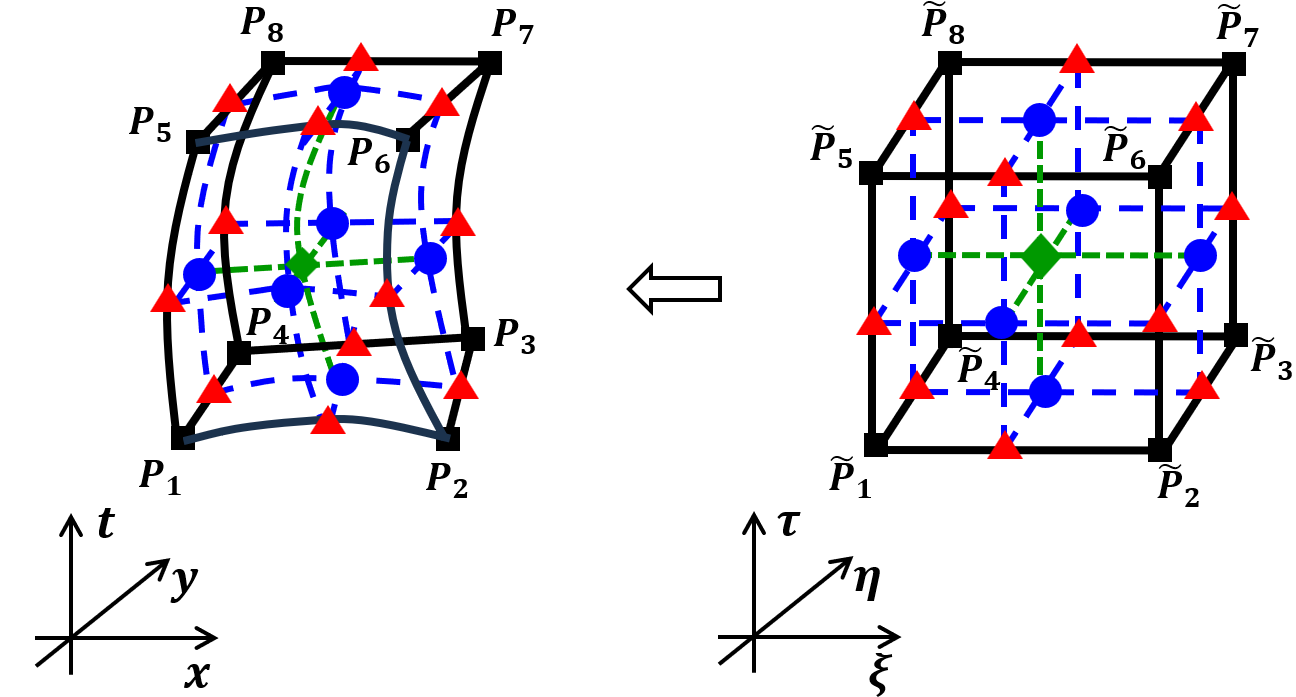}}
    \caption{High-order curvilinear (a) 2D and (b) 3D tensor-product space-time elements. For the 2D space-time element in (a), the $P^1$ polynomial is used to construct the 1D spatial element, and the $P^2$ polynomial is used to construct the curve along the time dimension. For the 3D space-time element in (b), $P^2$ polynomials are used to construct the spatial and temporal curves along each dimension. Note that the edge, surface, and volume points for the 3D space-time element are not numbered in (b).}
  \label{fig:ST_Elements}
\end{figure}

In space-time FR, each flux vector within a space-time element $\Omega_e^{\text{st}}$ is decomposed into a local component $(*)^{\text{loc}}$, constructed from the solution inside the element, and a correction component $(*)^{\text{cor}}$, formed using information from the element interfaces. The correction fluxes account for the discrepancies between the local fluxes and the common (numerical) fluxes imposed on the element boundaries. 
The total space-time flux is thus given by
\begin{equation} 
\Biggl\{
\begin{array}{lcl}
\widetilde{\vect{Q}} (\tau,\xi,\eta) = \widetilde{\vect{Q}}^{\text{loc}} (\tau,\xi,\eta) +
\widetilde{\vect{Q}}^{\text{cor}} (\tau,\xi,\eta)\\
\widetilde{\vect{F}} (\tau,\xi,\eta) =  
\widetilde{\vect{F}}^{\text{loc}} (\tau,\xi,\eta) +
\widetilde{\vect{F}}^{\text{cor}} (\tau,\xi,\eta) \\
\widetilde{\vect{G}} (\tau,\xi,\eta) = 
\widetilde{\vect{G}}^{\text{loc}} (\tau,\xi,\eta) +
\widetilde{\vect{G}}^{\text{cor}} (\tau,\xi,\eta)
\end{array}. 
\label{eq:flux_total}
\end{equation}
For a tensor-product space consisting of $(\tau,\xi,\eta)$, we have
\begin{equation}
\begin{array}{l}
\widetilde{\vect{Q}}^{\text{cor}}  (\tau, \xi, \eta) =
\left(
    \widetilde{\vect{Q}}_L^{\text{num}}  (\xi, \eta)
    - \widetilde{\vect{Q}}^{\text{loc}} (-1, \xi, \eta)
\right) g_L (\tau) +
\left(
    \widetilde{\vect{Q}}_R^{\text{num}}  (\xi, \eta)
    - \widetilde{\vect{Q}}^{\text{loc}} (1, \xi, \eta) 
\right) g_R (\tau), \\
\widetilde{\vect{F}}^{\text{cor}}  (\tau, \xi, \eta) =
\left(
    \widetilde{\vect{F}}_L^{\text{num}}  (\tau, \eta)
    - \widetilde{\vect{F}}^{\text{loc}}  (\tau, -1, \eta)
\right) g_L (\xi) +
\left(
    \widetilde{\vect{F}}_R^{\text{num}}  (\tau, \eta)
    - \widetilde{\vect{F}}^{\text{loc}}  (\tau, 1, \eta)
\right) g_R (\xi),\\
\widetilde{\vect{G}}^{\text{cor}}  (\tau, \xi, \eta) =
\left(
    \widetilde{\vect{G}}_L^{\text{num}}  (\tau, \xi)
    - \widetilde{\vect{G}}^{\text{loc}}  (\tau, \xi, -1)
\right)  g_L (\eta) +
\left(
    \widetilde{\vect{G}}_R^{\text{num}}  (\tau, \xi)
    - \widetilde{\vect{G}}^{\text{loc}}  (\tau, \xi, 1)
 \right)  g_R (\eta).
\end{array}
\label{eq:flux_cor}
\end{equation}
Herein, 
$g_{\rm L/R} (*)$ are the correction functions, which are required to satisfy the following conditions:
$g_L(-1) = g_R(1) = 1$ and $g_L(1) = g_R(-1) = 0$.
It is clear that the correction functions map the differences between the numerical fluxes $(*)^{\text{num}}$  and local fluxes $(*)^{\text{loc}}$ on the space-time element boundaries to the entire element. 
Different correction functions can recover different numerical schemes~\cite{huynh2007}. For example,
the right and left Radau polynomials are used as correction functions to recover the DG scheme in this study.
In Eq.~\eqref{eq:flux_cor},
$\widetilde{\vect{Q}}_{L/R}^{\text{num}}$, $\widetilde{\vect{F}}_{L/R}^{\text{num}}$, and $\widetilde{\vect{G}}_{L/R}^{\text{num}}$  are the numerical fluxes on the left or right boundaries of the standard space-time element in the $\tau$-, $\xi$- and $\eta$-direction, respectively. They are evaluated from the common normal fluxes $\vect{F}_n^{\text{st,com}}$ on the corresponding element surfaces in the physical domain as follows:
\begin{equation} \label{eq:ST_Flux_Com}
\Biggl \{
\begin{array}{l}
\widetilde{\vect{Q}}^{\text{num}} = |J| \norm{\nabla^{\text{st}} \tau}_2 \ \text{sign} (\vect{n}^{\text{st}} \cdot \nabla^{\text{st}} \tau) \vect{F}_n^{\text{st,com}} \\
\widetilde{\vect{F}}^{\text{num}} = |J| \norm{\nabla^{\text{\text{st}}} \xi}_2 \ \text{sign} (\vect{n}^{\text{st}} \cdot \nabla^{\text{st}} \xi) \vect{F}_n^{\text{st,com}} \\
\widetilde{\vect{G}}^{\text{num}} = |J| \norm{\nabla^{\text{st}} \eta}_2 \ \text{sign} (\vect{n}^{\text{st}} \cdot \nabla^{\text{st}} \eta) \vect{F}_n^{\text{st,com}}
\end{array},
\end{equation}
where the function $\text{sign} (*)$ takes the sign of the input, and $\norm{*}_2$ takes the $\ell^2$-norm of a vector input. 
Note that the normal space-time flux vector $\vect{F}_n^{\text{st}}$ is defined as $\vect{F}_n^{\text{st}} = \vect{n}^{\text{st}} \cdot \mathbf{F}^{\text{st}}$,
where $\vect{n}^{\text{st}}$ is the outward-going unit normal vector of the space-time boundary $\partial \Omega_e^{\text{st}}$ for any space-time element $\Omega_e^{\text{st}}$. On any surface of the space-time element, there exist two sets of values of $\vect{F}_n^{\text{st}}$ constructed separately from the two elements (including ghost elements over the domain boundaries) sharing the surface. Therefore, common normal fluxes $\vect{F}_n^{\text{st,com}}$ are calculated from the two sets of of values of $\vect{F}_n^{\text{st}}$ to enforce the mass, momentum, and energy conservation laws at the element boundaries.
In this study, the common inviscid fluxes at the element interfaces in the physical domain are calculated using the local Lax-Friedrichs (or Rusanov) approximate Riemann solver~\cite{RUSANOV1962}. 

%
After substituting Eq.~\eqref{eq:flux_total} into Eq.~\eqref{eq:GovEq_2D_Comp}, the governing equations become
\begin{equation}
\frac{\partial \widetilde{\vect{Q}}^{\text{loc}}}{\partial \tau} +
\frac{\partial
\widetilde{\vect{F}}^{\text{loc}}}{\partial \xi} +
\frac{\partial
\widetilde{\vect{G}}^{\text{loc}}}{\partial \eta}+ 
\widetilde{\vect{\delta}}^{\text{cor,st}} = 0,
\label{eq:GovEq_2D_Comp_FR}
\end{equation}
where
\begin{equation}
\widetilde{\vect{\delta}}^{\text{cor,st}} =
\frac{\partial \widetilde{\vect{Q}}^{\text{cor}}}{\partial \tau} +
\frac{\partial
\widetilde{\vect{F}}^{\text{cor}}}{\partial \xi} +
\frac{\partial
\widetilde{\vect{G}}^{\text{cor}}}{\partial \eta}.
\label{eq:Correction_Field_comp}
\end{equation}
Note that $\widetilde{\vect{\delta}}^{\text{cor}}$ is called the correction field in standard FR, and with the superscript `st', it is the space-time correction field. $\widetilde{\vect{\delta}}^{\text{cor,st}}$ can be directly evaluated with the derivative of the correction functions, that is, $g^{\prime}_{L/R} (*)$, due to the special structure of the correction fluxes $(*)^{\text{cor}}$ shown in Eq.~\eqref{eq:flux_cor}. 
%

Typical distributions of solution and flux points in a standard space-time element used to formulate STFR are shown in Figure~\ref{fig:ST_SF}. Therein, Gauss--Legendre points are used as solution points in each dimension, and flux points are defined as the intersections of the element boundaries with the coordinate-aligned lines drawn from each solution point. This should be distinguished from the Gauss-Lobatto points used to represent the curvilinear geometry of the space-time element displayed in Figure~\ref{fig:ST_Elements}. We mention that the accuracy features of different treatments of solutions and fluxes can vary much, especially for curvilinear elements.  A formal discussion of the polynomial approximation of solutions and fluxes, and their order of accuracy measure will be presented in \textbf{Sect.}~\ref{sec:SFA}.

\begin{figure}
  \centering
  \subfloat[2D Space-Time Solution and Flux Points]{\includegraphics[width=0.35\textwidth]{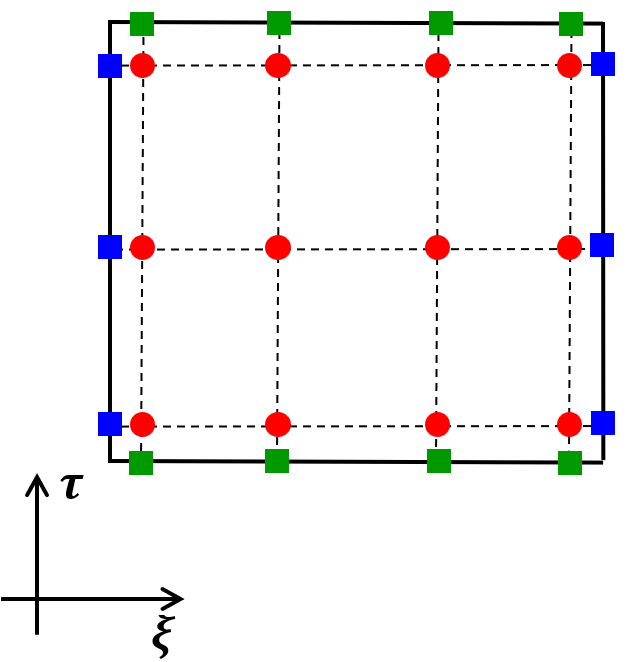}
  \label{fig:ST_SF_2D}
  }\hspace{0.2em}
\hspace{0.2em}
  \subfloat[3D Space-Time Solution and Flux Points]{\includegraphics[width=0.35\textwidth]{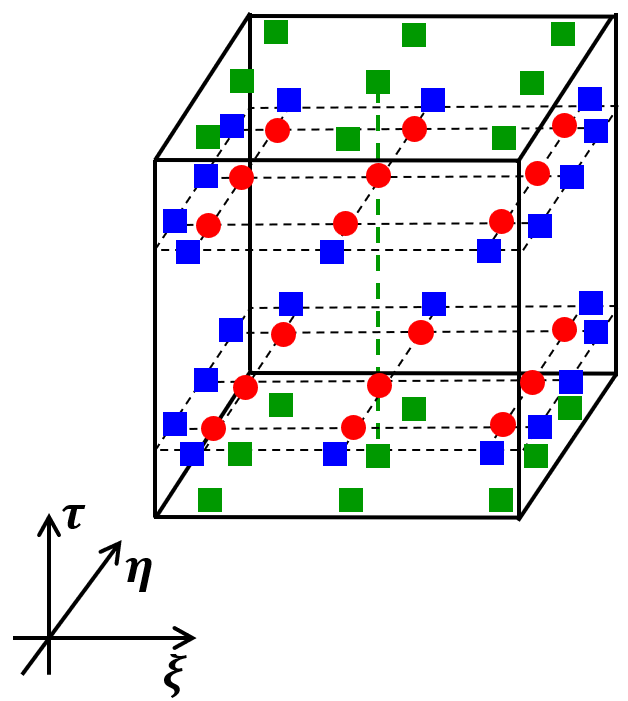}}
    \caption{Distribution of solution points (circle) and flux points (square) in (a) 2D and (b) 3D tensor-product space-time elements. Gauss-Legendre points are used in each dimension to construct solution and flux polynomials. For the 2D space-time element in (a), the $P^3$ polynomial is used to construct the solution along the spatial dimension, and the $P^2$ polynomial is used to construct the solution along the time dimension. For the 3D space-time element in (b), $P^2$ polynomials are used to construct the solution along each (i.e., $\xi$- and $\eta$-) spatial dimension, and the $P^1$ polynomial is used to construct the solution along the time dimension.}
  \label{fig:ST_SF}
\end{figure}



 
To solve the STFR formulation~\eqref{eq:GovEq_2D_Comp_FR}, we augment it with a pseudo-time derivative term $\partial \widetilde{\vect{Q}} / \partial \widetilde{t}$, where $\widetilde{t}$ is the pseudo-time. Define the residual 
vector $\widetilde{\vect{R}}(\widetilde{\vect{Q}})$ as the oppostite of the left-hand side of Eq.~\eqref{eq:GovEq_2D_Comp_FR}.
Thus, Eq.~\eqref{eq:GovEq_2D_Comp_FR} can be written as
\begin{equation}
\frac{\partial \widetilde{\vect{Q}}}{\partial \widetilde{t}} =\widetilde{\vect{R}}(\widetilde{\vect{Q}}).
\label{eq:GovEq_Semi}
\end{equation}

Both explicit and implicit pseudo-time marching methods can be used to solve Eq.~\eqref{eq:GovEq_Semi}. In this study, local pseudo-time stepping with the explicit two-stage, second-order strong stability preserving Runge-Kutta (SSPRK) method~\cite{Gottlieb_EtAl_SIAM_2001} is used to accelerate convergence. We mention that implicit pseudo-time marching methods~\cite{Wang_Yu_JSC_2020}, such as the backward Euler and second-order backward differentiation formula
(BDF2), can also be used to solve Eq.~\eqref{eq:GovEq_Semi} when the physical time step is sufficiently large. However, this is beyond the scope of the current work.


\section{Discussions of Moving Grid Simulations} \label{sec:GCL}
In the space-time coordinate transformation from Eq.~\eqref{eq:GovEq_2D} to Eq.~\eqref{eq:GovEq_2D_Comp}, the following geometric conservation laws are used:
\begin{equation}
\frac{\partial }{\partial \tau}
\left (
\begin{array}{c}
      |J| \tau_t \\
      |J| \tau_x \\
      |J| \tau_y
\end{array} \right )  +
\frac{\partial }{\partial \xi}
\left (
\begin{array}{c}
      |J| \xi_t \\
      |J| \xi_x \\
      |J| \xi_y
\end{array} \right )  +
\frac{\partial }{\partial \eta}
\left (
\begin{array}{c}
      |J| \eta_t \\
      |J| \eta_x \\
      |J| \eta_y
\end{array} \right ) = 0.
\label{eq:GCL}
\end{equation}

\subsection{Method of lines}
When the method of lines is used for moving grid simulation, the metrics $\tau_x$ and $\tau_y$ are always zeros, i.e. $\tau (t)$ is only a function of $t$. Therefore, only the first equation of the GCL~\eqref{eq:GCL} is time-dependent. Moreover, the grid velocity $\vect{v}_g \equiv d \vect{x}/d t$ can be expressed with the metrics $\tau_t$, $x_\tau$, and $y_\tau$ through $(v_{g,x}, v_{g,y}) = (x_\tau \tau_t, y_\tau \tau_t)$, where $v_{g,x}$ and $v_{g,y}$ are the $x$- and $y$-component of $\vect{v}_g$, respectively. Therefore, the metrics $\xi_t$ and $\eta_t$ in Eq.~\eqref{eq:GCL} can be calculated from the grid velocity as:
\begin{equation}
\biggl\{
\begin{array}{l}
\xi_t = - \vect{v}_g \cdot \nabla \xi\\
\eta_t = - \vect{v}_g \cdot \nabla \eta
\end{array}.
\end{equation}

Since the temporal discretization of $|J| \tau_t$  is usually not consistent with the spatial discretization of $|J| \xi_t$ and $|J| \eta_t$ in the first equation of the GCL~\eqref{eq:GCL}, the so-called discrete GCL errors can contaminate moving grid simulation. In general, there are two common approaches to control GCL errors in high-order numerical simulations with the method of lines. In the work by Persson \textit{et al.}~\cite{PerssonEtAl_CMAME_2009}, the first GCL formula in Eq.~\eqref{eq:GCL} was solved using the same numerical time integration scheme as that for the physical laws~\eqref{eq:GovEq_2D} or~\eqref{eq:GovEq_2D_Comp}. The analytical $|J|$ in $\widetilde{\vect{Q}} $ is then substituted with the numerical $|J|$ calculated from the first GCL to cancel the GCL error. Another approach to eliminate the GCL error is to replace the term $\partial (|J| \tau_t) / \partial \tau$ in Eq.~\eqref{eq:GovEq_2D_Comp} directly with $- \left( \partial (|J| \xi_t) / \partial \xi + \partial (|J| \eta_t) / \partial \eta \right)$ in the scheme implementation. As a result, all GCL-related terms only have errors related to spatial discretization, which is high-order and consistent between relevant terms. A local source term $-\vect{v}_g \cdot \nabla \vect{Q}$ is thus added to the governing equation to cancel the GCL error~\cite{Yu2011,Yu2016}.

To summarize, when the method of lines is used for moving grid simulation, the grid velocity needs to be calculated, and the time-dependent GCL needs to be enforced to ensure accuracy.

\subsection{Space-time method} \label{subsec:GCL_ST}
In the space-time method, the GCL~\eqref{eq:GCL} is automatically enforced, to the level of the numerical resolution, by the geometry of the space-time elements as shown in Figure~\ref{fig:ST_Elements}. 
As will be discussed in \textbf{Sect.}~\ref{sec:SFA}, GCL can be either enforced by using special metric construction strategies~\cite{Kopriva_JSC_2006,AbeEtAl_JCP_2015} or satisfied by embedding them exactly into the numerical schemes (see \textbf{Sect.}~\ref{subsec:new_FR}).

A special attention should be paid to the metrics $t_{\xi}$ and $t_{\eta}$ when using the tensor-product space-time curvilinear elements. In general, they are function of space and time; this indicates that the spatial grid points are allowed to move freely in the time dimension (see the space-time unstructured grid setup in~\cite{Behr_IJNMF_2008,Nishikawa_Padway_AIAAAva_2020,FRONTIN_EtAl_2021_ANM}). Here we use the concept of space-time slabs $[t_j, t_j + \Delta t_j]$ with piece-wise constant time steps $\Delta t_j$, and all spatial grid points are only allowed to march with the same distance in the temporal direction within any space-time curvilinear element. As a result, the physical time $t$ is only a (nonlinear) function of the reference time $\tau$. The proof is straightforward as for a full set of Lagrange polynomials $\phi_i (\xi)$, $i=1, \ldots, i_{max}$, we have $ \sum_{i=1}^{i_{max}} \phi_i (\xi) \equiv 1 $. Therefore, $\widetilde{\vect{Q}}$ in Eq.~\eqref{eq:ST_Var_ref} is expressed as $\widetilde{\vect{Q}} = |J| \tau_t \vect{Q}$. The inverse of the Jacobian matrix $J^{-1}$ (2D space plus time) in Eq.~\eqref{eq:Jacob_inv} can then be calculated from the so-called non-conservative formulation:
\begin{eqnarray}
|J| J^{-1} =
|J| \left (
\begin{array}{ccc}
      \tau_t & \tau_x & \tau_y  \\
      \xi_t  & \xi_x  & \xi_y \\
      \eta_t & \eta_x & \eta_y 
\end{array} \right )
=
\left (
\begin{array}{ccc}
      x_\xi y_\eta - x_\eta y_\xi & 0 & 0  \\
      -x_\tau y_\eta + y_\tau x_\eta  & t_\tau y_\eta  & -t_\tau x_\eta \\
      x_\tau y_\xi - y_\tau x_\xi  & -t_\tau y_\xi & t_\tau x_\xi 
\end{array} \right ).
\label{eq:Jacob_inv_2D}
\end{eqnarray}

We then have the following conclusion:
\begin{lemma}
The GCL~\eqref{eq:GCL} is automatically satisfied numerically when the geometric features of a curvilinear element can be completely represented by polynomials in a numerical scheme.
\label{lem:GCL_num}
\end{lemma}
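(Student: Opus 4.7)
The plan is to substitute the non-conservative representation of $|J|J^{-1}$ from Eq.~\eqref{eq:Jacob_inv_2D} directly into the three scalar equations contained in the GCL~\eqref{eq:GCL}, expand each derivative with the product rule, and verify that the resulting terms cancel pairwise by commutativity of mixed partial derivatives. The polynomial hypothesis on the geometric map enters at the discrete level: when $t$, $x$, and $y$ lie in the scheme's polynomial basis, all first and second derivatives, as well as the pairwise products that build Eq.~\eqref{eq:Jacob_inv_2D}, are representable within that basis, and the tensor-product collocation operators used by the scheme commute exactly on it.

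First I would dispatch the second and third components of Eq.~\eqref{eq:GCL}. The space-time slab assumption gives $t=t(\tau)$, so the $(2,1)$ and $(3,1)$ entries of $|J|J^{-1}$ vanish and $t_\tau$ depends only on $\tau$. The second component of the GCL then reduces to
\begin{equation*}
\partial_\xi(t_\tau y_\eta) + \partial_\eta(-t_\tau y_\xi) \;=\; t_\tau\bigl(y_{\eta\xi}-y_{\xi\eta}\bigr) \;=\; 0,
\end{equation*}
where the bracket vanishes by equality of mixed partials of the polynomial $y(\tau,\xi,\eta)$; an identical argument with $y$ replaced by $x$ handles the third component. The first component is the main algebraic step: substituting its three entries from Eq.~\eqref{eq:Jacob_inv_2D} and expanding with the product rule produces eight terms of the form $x_{ab}y_c$ and $y_{ab}x_c$ with $a,b,c\in\{\tau,\xi,\eta\}$, which pair so that each $x_{ab}y_c$ cancels its $-x_{ba}y_c$ counterpart, and likewise for the $y$-indexed terms. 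This is essentially the identity $\nabla\!\cdot\!(\nabla a\times\nabla b)\equiv 0$ specialized to the rows of $|J|J^{-1}$, and it collapses under the same commutativity of mixed partials.

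The hard part is giving the word \emph{numerically} its proper weight. In a collocation scheme a pointwise product such as $x_\xi y_\eta$ in general lives in a polynomial space strictly larger than the basis, so $D_\tau(x_\xi y_\eta)$ at the nodes need not equal $(D_\tau x_\xi)y_\eta + x_\xi(D_\tau y_\eta)$ once aliasing sets in, and the pointwise Leibniz rule cannot be invoked naively. What rescues the argument is that the non-conservative formula in Eq.~\eqref{eq:Jacob_inv_2D} is itself a polynomial expression in the geometric nodal data, and the scheme's tensor-product derivative operators satisfy $D_\xi D_\eta = D_\eta D_\xi$ exactly on the nodal basis regardless of degree; the pairwise cancellations of the preceding paragraph therefore survive when each metric quantity is evaluated and differentiated within the scheme's own polynomial space. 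Pinning down which projection keeps the metrics inside the basis, and showing that the cancellation commutes with that projection, is where most of the care has to go, and it is precisely the regime in which the polynomial representability hypothesis cannot be weakened without reintroducing a discrete GCL error.
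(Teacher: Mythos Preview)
Your proposal is correct and follows essentially the same route as the paper: substitute the non-conservative metrics from Eq.~\eqref{eq:Jacob_inv_2D}, expand, and cancel via equality of mixed partials of the polynomial coordinate map. The paper works out only the second component explicitly (writing $x,y,t$ in a tensor-product Lagrange basis and computing $\partial_\xi(|J|\xi_x)=t_\tau y_{\xi\eta}=-\partial_\eta(|J|\eta_x)$ analytically) and defers the remaining components to ``similar approaches,'' whereas you treat all three and spend more effort on the discrete-versus-analytic distinction; but the underlying mechanism is identical.
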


\begin{proof}
Take the second formula, i.e.
\[
\frac{\partial }{\partial \tau}
\left(|J| \tau_x \right) +
\frac{\partial }{\partial \xi}
\left(|J| \xi_x \right) +
\frac{\partial }{\partial \eta}
\left(|J| \eta_x  \right) = 0
\]
as an example.
Similar approaches can be used to prove other formulas in Eq.~\eqref{eq:GCL}.

In the space-time slab setup, we have $\tau_x \equiv 0$. The coordinates $x(\xi,\eta,\tau)$, $y(\xi,\eta,\tau)$, and $ t(\xi,\eta,\tau) \equiv t(\tau)$ of a curvilinear element can, in general, be written as
\begin{equation*}
\biggl\{
\begin{array}{l}
x(\xi,\eta,\tau) = 
\sum_{i_1} \sum_{i_2} \sum_{i_3} L_{i_1,i_2,i_3}(\xi,\eta,\tau) x_{i_1,i_2,i_3} = 
\sum_{i_1}  \sum_{i_2} \sum_{i_3} \phi_{i_1}(\xi) \phi_{i_2}(\eta) \phi_{i_3}(\tau)
x_{i_1,i_2,i_3}\\
y(\xi,\eta,\tau) = 
\sum_{i_1} \sum_{i_2} \sum_{i_3} L_{i_1,i_2,i_3}(\xi,\eta,\tau) y_{i_1,i_2,i_3} = 
\sum_{i_1}  \sum_{i_2} \sum_{i_3} \phi_{i_1}(\xi) \phi_{i_2}(\eta) \phi_{i_3}(\tau) y_{i_1,i_2,i_3}\\
t(\tau) = \sum_{i_3} \phi_{i_3}(\tau) t_{i_3} 
\end{array},
\end{equation*}
where $\phi_i (*)$, $* = \xi, \eta, \text{or}~\tau \in [-1,1]$ are 1D Lagrange polynomials, and $L_{i_1,i_2,i_3}(\xi,\eta,\tau)$ is their tensor product.

Since the coordinates are fully represented by polynomials, the term $\partial (|J| \xi_x)/\partial \xi$ can be calculated analytically from
\begin{equation*}
\frac{\partial |J| \xi_x}{\partial \xi} 
\stackrel{t_{\xi *} \equiv 0}{=}  t_\tau y_{\xi \eta}
= \left(\sum_{i_3} \frac{d \phi_{i_3} (\tau)}{d \tau} t_{i_3} \right)
  \left(\sum_{i_1} \sum_{i_2} \sum_{i_3} 
  \frac{d \phi_{i_1} (\xi)}{d \xi}
  \frac{d \phi_{i_2} (\eta)}{d \eta}
  \phi_{i_3} (\tau)
   y_{i_1,i_2,i_3} \right).
\end{equation*}
Following the same procedure, it can be shown that $\partial (|J| \eta_x)/\partial \eta$  is exactly $- \partial (|J| \xi_x)/\partial \xi$. This ensures that the second formula in Eq.~\eqref{eq:GCL} always holds numerically. 
\end{proof}

\section{Discussions of Solution and Flux Approximation} \label{sec:SFA}

In this section, we first introduce two popular ways to construct solution and flux polynomials for curvilinear elements in static domains, and discuss their numerical features. To facilitate discussion, only the spatial dimensions are considered. Based on that understanding, we then 
develop a new approach to approximate solutions and fluxes as polynomials over general curvilinear space-time elements.

\subsection{Two popular solution and flux approximation approaches} \label{subsec:Implementation}

\subsubsection{Physical domain approach} \label{subsubsec:phys_dom}
Divide the spatial domain $\Omega$ into $N_e$ non-overlapping elements $\Omega_e$, $e=1,  \ldots, N_e$, with the element boundary denoted as $\partial \Omega_e$. In many DG-based constructions~\cite{YOU_EtAl_CPC_23}, each component $Q_i$, $i=1,\ldots,N_v$, of the solution vector $\vect{Q}$ is approximated as a polynomial $Q_{h,i} (\vect{x},t)$, which sits in the degree $k$ polynomial space $P^k (\Omega_e)$ based on the physical element $\Omega_e$.
Let $w_{j}(\vect{x})$, $j=1, \ldots, N(k;\Omega_e)$, denote the orthogonal bases of the polynomial space.   
We have
\begin{equation*}
Q_{h,i} (\vect{x},t) = \sum_j^{N(k)} q_{h,i,j}(t) w_{j}(\vect{x}),
\end{equation*}
where $N(k)$ depends on the element type and the dimension of the polynomial space. For example, for a tetrahedral element, $N(k) = (k+1)(k+2)(k+3)/6$; and for a $d$-dimensional tensor-product element, $N(k) = (k+1)^d$. 
The strong form of Eq.~\eqref{eq:GovEq} for each spatial element $\Omega_e$ bounded by $\partial \Omega_e$ finally becomes
\begin{equation} \label{eq:WRes_Strong}
 \int_{\Omega_e} \frac{\partial \vect{Q_h}}{\partial t} \otimes \vect{w} dV  + 
 \int_{\Omega_e}  \nabla \cdot \mathbf{F}^{\text{loc}} \otimes \vect{w} dV + 
 \int_{\partial \Omega_e} \left( \vect{F}_n^{\text{com}} - \vect{F}_n^{\text{loc}} \right) \otimes \vect{w} dS = 0.
\end{equation}
Herein, $\vect{Q_h}$ is the solution vector consisting of all $Q_{h,i}(\vect{x},t)$, $\vect{w}$ is a vector consisting of all bases $w_{j}(\vect{x})$, $\vect{F}_n^{\text{loc}}$ and $\vect{F}_n^{\text{com}}$ are the local and common normal fluxes with $\vect{F}_n = \vect{n} \cdot \mathbf{F}$, where $\vect{n}$ is the outward-going unit normal vector of the physical element boundary $\partial \Omega_e$, and $\otimes$ is the tensor product operator.
Numerical quadrature can then be performed to calculate each component of the mass matrix, and to carry out the volumetric and surface integration for flux-related terms. Over-integration can be used to enhance the integration accuracy by reducing aliasing errors from the flux-related terms.

To facilitate implementation, $Q_i$ can be approximated as a polynomial $Q_{h,i}^r (\vect{\xi},t) \in P^k (\Omega^r)$, where $\Omega^r$ is the standard element in the reference domain, and $\vect{\xi} = (\xi_1, \ldots, \xi_d)$, e.g. $\vect{\xi} = (\xi,\eta)$ for the two-dimensional governing equations. Since the mapping between the reference and physical domains is one-to-one within any physical element $\Omega_e$ (see Figure~\ref{fig:ST_Elements}), the inverse map exists and $\vect{\xi} = \vect{\xi}(\vect{x})$.  Analogous to the nomenclature used for the physical element, we can define the solution vector $\vect{Q_h}^r (\vect{\xi})$ for the reference domain, and the vector $\vect{w}^r (\vect{\xi})$ consisting of all bases of the polynomial space $P^k (\Omega^r)$. 
Thus, the integration of Eq.~\eqref{eq:GovEq} in the physical element $\Omega_e$ can be calculated from the reference element $\Omega^r$ as

\begin{equation} \label{eq:Transform}
 \begin{gathered}
 \int_{\Omega_e}  \left( 
 \frac{\partial \vect{Q_h}^{r}(\vect{\xi} (\vect{x}),t)}{\partial t} 
 + \nabla \cdot \mathbf{F}^{r}(\vect{\xi} (\vect{x}),t)
\right) \otimes \vect{w}^{r} (\vect{\xi} (\vect{x})) dV  = \\
 \int_{\Omega^r}  \left( 
 \frac{\partial \vect{Q_h}^r(\vect{\xi},t)}{\partial t}
 +\sum_{i=1}^d \nabla \xi_{i} \cdot \frac{\partial \mathbf{F}^r(\vect{\xi},t)}{\partial \xi_{i}}
 \right) \otimes \vect{w}^r(\vect{\xi}) |J|(\vect{\xi};\Omega_e) dV^r.
 \end{gathered}
\end{equation} 
Note that although the original integration concept (i.e. the left-hand side of the equation) is based on the physical domain, the implementation (i.e. the right-hand side of the equation), including the approximation of solutions and fluxes, Jacobian and metric calculation, and numerical integration, is based on the reference domain. 
Therefore, this approach can  be termed as a hybrid reference-physics domain approach, which will be further discussed and developed in following sections.

\begin{remark}
In the analytical governing equations, the value of $\vect{Q}^r(\vect{\xi},t)$ is equal to that of $\vect{Q}(\vect{x},t)$ at the point pair $\vect{\xi}$ and $\vect{x}$. However, the polynomial approximations $\vect{Q_h} (\vect{x},t)$ in the physical domain do not equal to the polynomial approximations $\vect{Q_h}^r (\vect{\xi},t)$ in the reference domain at the point pair $\vect{\xi}$ and $\vect{x}$ when the transformation between $\vect{x}$ and $\vect{\xi}$ is nonlinear.  
\end{remark}

\begin{remark}
Over-integration is usually needed to enhance the integration accuracy of the flux-related terms in Eq.~\eqref{eq:Transform} for curvilinear elements.  
\end{remark}

\subsubsection{Reference domain approach} \label{subsubsec:RefD}
Without loss of generality, we use the 2D governing equations to explain solution and flux approximations here. When the method of lines is used to solve Eq.~\eqref{eq:GovEq_2D} in a stationary domain, the solution and flux vectors take the following forms in the reference domain:
\begin{equation} \label{eq:Var_trans_2D}
\Biggl\{
\begin{array}{l}
\widetilde{\vect{Q}} = |J| \vect{Q} \\
\widetilde{\vect{F}} = |J| \xi_x \vect{F} + |J| \xi_y \vect{G} \\
\widetilde{\vect{G}} = |J| \eta_x \vect{F} + |J| \eta_y \vect{G}
\end{array}, 
\end{equation}
with $\tau \equiv t$, and the governing equations read like Eq.~\eqref{eq:GovEq_2D_Comp} but with different definitions of $\widetilde{\vect{Q}}$, $\widetilde{\vect{F}}$, and $\widetilde{\vect{G}}$. 

In the reference domain approach, each component $\widetilde{Q}_i$ of the solution vector $\widetilde{\vect{Q}}$ is approximated as a polynomial $\widetilde{Q}_{h,i} (\vect{\xi},t)$, which sits in the degree $k$ polynomial space $P^k (\Omega^r)$ based on the standard element $\Omega^r$.
Let $w_{j}^r(\vect{\xi})$, $j=1, \ldots, N(k;\Omega^r)$, denote the orthogonal bases of the polynomial space $P^k (\Omega^r)$.
As a result, $\widetilde{Q}_{h,i} (\vect{\xi},t)$ can be expressed as
\begin{equation*}
\widetilde{Q}_{h,i} (\vect{\xi},t) = \sum_j^{N(k)} \widetilde{q}_{h,i,j}(t) w_{j}^r(\vect{\xi}).
\end{equation*}

Let $\widetilde{\vect{Q}}_{\vect{h}}$ be the solution vector consisting of all $\widetilde{Q}_{h,i} (\vect{\xi},t)$, and $\vect{w}^r$ be the vector consisting of all bases $w_{j}^r(\vect{\xi})$. The DG scheme in the strong form for the governing equation on a stationary grid in the reference domain $\Omega^r$ bounded by $\partial \Omega^r$ can then be written as
\begin{equation} \label{eq:DG_ref}
 \int_{\Omega^r} \frac{\partial \widetilde{\vect{Q}}_{\vect{h}}}{\partial t} \otimes \vect{w}^r dV^r  + 
 \int_{\Omega^r}  \nabla^r \cdot \widetilde{\mathbf{F}}^{\text{loc}} \otimes \vect{w}^r dV^r + 
 \int_{\partial \Omega^r} \left( \widetilde{\vect{F}}_{n^r}^{\text{num}} - \widetilde{\vect{F}}_{n^r}^{\text{loc}} \right) \otimes \vect{w}^r d S^r = 0.
\end{equation}
Herein, $\nabla^r$ is the gradient operator in the reference domain, $\widetilde{\mathbf{F}} = \left(\widetilde{\vect{F}},  \widetilde{\vect{G}}\right)$ in 2D, 
and $\widetilde{\vect{F}}_{n^r} = \vect{n}^r \cdot \widetilde{\mathbf{F}}$, where $n^r$ is the outward-going unit normal vector of the standard element $\Omega^r$. 
%


The accuracy of the reference domain approach can suffer on curvilinear elements due to the associated geometric nonlinearity. 
Moreover, the freestream flow may not be preserved by the reference domain approach due to the violation of the GCL~\eqref{eq:GCL} in its discrete form.    
To restore the accuracy of the reference domain approach for curvilinear elements, several conservative metric construction methods have been developed; see~\cite{Kopriva_JSC_2006,AbeEtAl_JCP_2015}. 
If the non-conservative metric calculation (see Eq.~\eqref{eq:Jacob_inv_2D} as an example) is used, the coordinate transformation concept presented in Eq.~\eqref{eq:Transform} can be used to enhance the accuracy of the implementation of Eq.~\eqref{eq:DG_ref} on curvilinear elements. With the correction procedure via reconstruction (CPR) concept~\cite{wang2009}, Eq.~\eqref{eq:Transform} can be recast into the differential format for any physical element $\Omega_e$ as
\begin{equation} \label{eq:FR_Diff}
 \frac{\partial \vect{Q_h}^r(\vect{\xi},t)}{\partial t} +
 \nabla \cdot \mathbf{F}^{r,\text{loc}}(\vect{\xi},t) +
 \vect{\delta}^{r,\text{cor}}(\vect{\xi},t) = 0,
\end{equation}
with
\begin{equation*}
\int_{\Omega^r}   \vect{\delta}^{r,\text{cor}}(\vect{\xi},t) \otimes \vect{w}^r(\vect{\xi}) |J|(\vect{\xi};\Omega_e) d V^r
=
\int_{\partial \Omega^r} \left( \widetilde{\vect{F}}_{n^r}^{\text{num}}(\vect{\xi},t) - \widetilde{\vect{F}}_{n^r}^{\text{loc}}(\vect{\xi},t) \right) \otimes \vect{w}^r(\vect{\xi}) d S^r.
\end{equation*}
Herein, $|J|(\vect{\xi};\Omega_e) \vect{\delta}^{r,\text{cor}}$ becomes the spatial correction field in the FR formula  \eqref{eq:GovEq_2D_Comp_FR} in a stationary domain.

Note that the numerical implementation of Eq.~\eqref{eq:FR_Diff} can preserve the freestream flow on curvilinear meshes~\cite{YU201470}, but does not necessarily hold the discrete GCL. As a result, although Eq.~\eqref{eq:DG_ref} and Eq.~\eqref{eq:FR_Diff} are analytically identical to each other, they are numerically different. 

\subsection{A new perspective} \label{subsec:new_FR}
As discussed in \textbf{Sect.}~\ref{subsubsec:RefD}, when $\widetilde{\vect{Q}}$ is approximated as a polynomial, the accuracy of the solution $\vect{Q}$ can suffer due to the geometric nonlinearity of the curvilinear elements. The fix via CPR through Eq.~\eqref{eq:FR_Diff} does not necessarily hold discrete GCL, which is not desirable. Here we introduce a new perspective to approximate $\widetilde{\vect{Q}}$ as a polynomial by approximating $\vect{Q}$ and metrics as polynomials separately.  

\subsubsection{Stationary curvilinear elements} \label{subsubsec:Stat_CE}

Similar to \textbf{Sect.}~\ref{subsubsec:RefD}, only 2D governing equations on stationary grids and solved with the method of lines are used to explain the concept in this subsection. Moreover, the tensor product polynomial space $\mathbb{Q}(\Omega^r)$ defined on the standard element $\Omega^r$ will be considered in this study. Thus, we can explicitly write $\mathbb{Q}^k(\Omega^r)$ with the polynomial degree at most $k$ in each variable defined on the 2D standard quadrilateral element $\Omega^r$ as $\mathbb{Q}^k(\xi,\eta) = P^k(\xi) \otimes P^k(\eta)$. 

Let $Q_{h,i} (\vect{\xi};t) \in \mathbb{Q}^k (\xi, \eta)$ at a fixed time $t$ be the polynomial approximation of each component $Q_i$ of the solution vector $\vect{Q}$. For curvilinear elements, the physical domain coordinates $(x,y)$ can be represented as polynomials, i.e. $x,y \in \mathbb{Q}^l (\xi, \eta)$. For 2D coordinate transformation between the stationary physical domain $(x,y)$ and the reference domain $(\xi, \eta)$, we summarize the polynomial space where the Jacobian and metrics sit in Table~\ref{Tab:metrics}. 
Therefore, $\widetilde{Q}_{h,i} (\vect{\xi}) = |J| Q_{h,i} \in \mathbb{Q}^{k+2l-1} (\xi, \eta) $.

\begin{remark} \label{re:poly_degree_2D}
Note that the coefficients of the polynomials used to represent the Jacobian and metrics are known (see \textbf{Sect.}~\ref{subsec:GCL_ST}). Therefore, in the degree $k+2l-1$ polynomial approximation of $\widetilde{Q}_{h,i}$, only $k+1$ coefficients in each dimension are independent.
\end{remark}

\begin{table}[!htbp] 
\centering
\begin{tabular}{|c|c|c|c|}
\hline
                  & $|J|$ & $|J|\xi_x$, $|J|\xi_y$ & $|J|\eta_x$, $|J|\eta_y$ \\
\hline
PS &  $\mathbb{Q}^{2l-1} (\xi,\eta)$ & $P^l(\xi) \otimes P^{l-1}(\eta)$ & $P^{l-1}(\xi) \otimes P^l(\eta)$ \\ 
\hline
\end{tabular}
\caption{A list of polynomial space occupied by the Jacobian and metrics of a stationary curvilinear element with $x,y \in \mathbb{Q}^l (\xi, \eta)$. `PS' stands for polynomial space. Note that $l \geq 1$. When $l = 1$, there exists a special case when the element is a proper rectangle. Therein, the Jacobian $J$ and all metrics are constant. }
\label{Tab:metrics}
\end{table}

Based on Eq.~\eqref{eq:Var_trans_2D}, to match the polynomial degree of $\widetilde{Q}_{h,i}$ in the differential form of the governing equation, each component of the flux vector $\widetilde{\vect{F}}$ in the reference domain can be approximated as a polynomial sitting in the space $P^{k+2l}(\xi) \otimes P^{k+2l-1}(\eta)$, and each component of $\widetilde{\vect{G}}$ can be approximated as a polynomial sitting in the space $P^{k+2l-1}(\xi) \otimes P^{k+2l}(\eta)$. 
Thus, we have $\partial \widetilde{F}_i / \partial \xi \in \mathbb{Q}^{k+2l-1} (\xi, \eta)$, and $\partial \widetilde{G}_i / \partial \eta \in \mathbb{Q}^{k+2l-1} (\xi, \eta)$, where $\widetilde{F}_i$ and $\widetilde{G}_i$ are the $i$-th component of $\widetilde{\vect{F}}$ and $\widetilde{\vect{G}}$, respectively. 
Following the FR formulation~\eqref{eq:flux_total}, we can split $\widetilde{\vect{F}}$ and $\widetilde{\vect{G}}$ into local fluxes $\widetilde{\vect{F}}^{\text{loc}} (\xi,\eta)$, $\widetilde{\vect{G}}^{\text{loc}} (\xi,\eta)$, and correction fluxes $\widetilde{\vect{F}}^{\text{cor}} (\xi,\eta)$, $\widetilde{\vect{G}}^{\text{cor}} (\xi,\eta)$. Similar to Eq.~\eqref{eq:flux_cor}, the correction fluxes can be approximated with high-order correction functions as 
\begin{equation}
\Biggl\{
\begin{array}{l}
\widetilde{\vect{F}}^{\text{cor}}  (\xi, \eta) =
\left(
    \widetilde{\vect{F}}_L^{\text{num}}  (\eta)
    - \widetilde{\vect{F}}^{\text{loc}}  (-1, \eta)
\right) g_L (\xi) +
\left(
    \widetilde{\vect{F}}_R^{\text{num}}  (\eta)
    - \widetilde{\vect{F}}^{\text{loc}}  (1, \eta)
\right) g_R (\xi),\\
\widetilde{\vect{G}}^{\text{cor}}  (\xi, \eta) =
\left(
    \widetilde{\vect{G}}_L^{\text{num}}  (\xi)
    - \widetilde{\vect{G}}^{\text{loc}}  (\xi, -1)
\right)  g_L (\eta) +
\left(
    \widetilde{\vect{G}}_R^{\text{num}}  (\xi)
    - \widetilde{\vect{G}}^{\text{loc}}  (\xi, 1)
 \right)  g_R (\eta).
\end{array}
\label{eq:flux_cor_2D}
\end{equation}
Herein, $g_{L/R} (\xi) \in P^{k+2l}(\xi)$, and $g_{L/R} (\eta) \in P^{k+2l}(\eta)$. 

Now we examine polynomial space that can be used to approximate local fluxes $\widetilde{\vect{F}}^{\text{loc}} (\xi,\eta)$ and $\widetilde{\vect{G}}^{\text{loc}} (\xi,\eta)$. Note that the piece-wise continuous total fluxes $\widetilde{\vect{F}}$ and $\widetilde{\vect{G}}$ sit in the same polynomial space as that of $\widetilde{\vect{F}}^{\text{cor}}$ and $\widetilde{\vect{G}}^{\text{cor}}$. 
According to the structure of Eq.~\eqref{eq:flux_cor_2D}, $\widetilde{F}_i^{\text{loc}} (*,\eta) \in P^{k+2l-1} (\eta)$ along the $\eta$ direction, and $\widetilde{G}_i^{\text{loc}} (\xi,*) \in P^{k+2l-1} (\xi)$ along the $\xi$ direction, where $\widetilde{F}_i^{\text{loc}}$ and $\widetilde{G}_i^{\text{loc}}$ are the $i$-th component of $\widetilde{\vect{F}}^{\text{loc}}$ and $\widetilde{\vect{G}}^{\text{loc}}$, respectively. This gives flexibility to approximate $\widetilde{\vect{F}}^{\text{loc}}$ along the $\xi$ direction, and $\widetilde{\vect{G}}^{\text{loc}}$ along the $\eta$ direction. We study this issue by checking the DG formulation~\eqref{eq:DG_ref} in the integral form. 
%

Let $\vect{w}^r = \left( w_1^r, \ldots, w_j^r, \ldots, w_{(k+2l) \times (k+2l)}^r \right)$, and $w_j^r (\xi,\eta) \in \mathbb{Q}^{k+2l-1} (\xi,\eta)$, and $\nabla^r = (\partial/\partial \xi, \partial/\partial \eta)$ in 2D. When the Gauss--Legendre quadrature rule with $k+2l$ quadrature points in each dimension is used to carry out integration, it can integrate the degree $2(k+2l)-1$ polynomial exactly. Therefore, to ensure that all integrals in Eq.~\eqref{eq:DG_ref} can be integrated exactly, one choice is to approximate $\widetilde{\mathbf{F}}^{\text{loc}} = (\widetilde{\vect{F}}^{\text{loc}}, 
 \widetilde{\vect{G}}^{\text{loc}})$ as
 $\widetilde{F}_i^{\text{loc}} \in P^{k+2l}(\xi) \otimes P^{k+2l-1}(\eta)$, and $\widetilde{G}_i^{\text{loc}} \in P^{k+2l-1}(\xi) \otimes P^{k+2l}(\eta)$. Note that this indicates that $F_i^{\text{loc}}, G_i^{\text{loc}} \in \mathbb{Q}^{k+l} (\xi, \eta)$. We can further define the correction field $\widetilde{\vect{\delta}}^{\text{cor}} = |J| \vect{\delta}^{\text{cor}}$ with each of its component $\widetilde{\delta}_i^{\text{cor}} \in \mathbb{Q}^{k+2l-1} (\xi,\eta)$ or equivalently $\delta_i^{\text{cor}} \in \mathbb{Q}^k (\xi,\eta)$ as 
 \begin{equation}  \label{eq:field_cor_ref_integ}
 \int_{\Omega^r}   \widetilde{\vect{\delta}}^{\text{cor}} \otimes \vect{w}^r d V^r  = 
 \int_{\partial \Omega^r} \left( \widetilde{\vect{F}}_{n^r}^{\text{num}} - \widetilde{\vect{F}}_{n^r}^{\text{loc}} \right) \otimes \vect{w}^r d S^r.
\end{equation}
According to~\cite{YuWang_JSC_2013}, $\widetilde{\vect{\delta}}^{\text{cor}}$ defined in Eq.~\eqref{eq:field_cor_ref_integ} is equivalent to the following definition with $\widetilde{\vect{F}}^{\text{cor}}$ and $\widetilde{\vect{G}}^{\text{cor}}$ in 2D:
 \begin{equation}  \label{eq:field_cor_ref_diff}
 \widetilde{\vect{\delta}}^{\text{cor}} = 
 \frac{\partial \widetilde{\vect{F}}^{\text{cor}}}{\partial \xi} +
 \frac{\partial \widetilde{\vect{G}}^{\text{cor}}}{\partial \eta} .
\end{equation}
As a result, we can exactly convert the DG formulation~\eqref{eq:DG_ref} in the integral form into the FR formulation in the differential form numerically:
\begin{equation} \label{eq:2DFR_new_ref}
\frac{\partial \widetilde{\vect{Q}}_{\vect{h}}}{\partial t} +
\frac{\partial
\widetilde{\vect{F}}^{\text{loc}}}{\partial \xi} +
\frac{\partial
\widetilde{\vect{G}}^{\text{loc}}}{\partial \eta}+ 
\widetilde{\vect{\delta}}^{\text{cor}} = 0.
\end{equation}

Based on the aforementioned analyses, we summarize the polynomial degrees of solutions and fluxes used in the above analysis in Table~\ref{Tab:Phy_Variables}. 

\begin{table}[!htbp]
\centering
\begin{tabular}{|c|c|c|c|c|}
\hline
   & $\widetilde{\vect{Q}}$  
   & $\vect{Q}$ 
   & $\vect{F}^{\text{loc}}$ & $\vect{G}^{\text{loc}}$ \\ \hline
PS & $\mathbb{Q}^{k+2l-1} (\xi,\eta)$ 
   & $\mathbb{Q}^{k} (\xi,\eta)$ 
   & $\mathbb{Q}^{k+l} (\xi,\eta)$ 
   & $\mathbb{Q}^{k+l} (\xi,\eta)$  \\ \hline
   \hline
   & $\widetilde{\vect{\delta}}^{\text{cor}}$
   & $\vect{\delta}^{\text{cor}}$
   & $\widetilde{\vect{F}}^{\text{cor}}$, $\widetilde{\vect{F}}^{\text{loc}}$
   & $\widetilde{\vect{G}}^{\text{cor}}$, $\widetilde{\vect{G}}^{\text{loc}}$ \\ \hline
PS & $\mathbb{Q}^{k+2l-1} (\xi,\eta)$ 
   & $\mathbb{Q}^{k} (\xi,\eta)$ 
   & $P^{k+2l}(\xi) \otimes P^{k+2l-1}(\eta)$ 
   & $P^{k+2l-1}(\xi) \otimes P^{k+2l}(\eta)$ \\ \hline
\end{tabular}
\caption{A list of polynomial space where the FR solution, flux, and correction field variables in the physical domain and reference domain sit at any specific time.}
\label{Tab:Phy_Variables}
\end{table}

\begin{theorem}
The FR scheme~\eqref{eq:2DFR_new_ref}  with solution, flux and correction field polynomial degrees following those listed in Table~\ref{Tab:Phy_Variables}
numerically satisfies the following GCL (i.e. the GCL~\eqref{eq:GCL} in the stationary domain)
\begin{equation}
\frac{\partial }{\partial \xi}
\left (
\begin{array}{c}
      |J| \xi_x \\
      |J| \xi_y
\end{array} \right )  +
\frac{\partial }{\partial \eta}
\left (
\begin{array}{c}
      |J| \eta_x \\
      |J| \eta_y
\end{array} \right ) = 0,
\label{eq:GCL_Sta}
\end{equation}
no matter the metrics are calculated with the non-conservative or conservative approaches~\cite{AbeEtAl_JCP_2015}.  
\label{pro:sta}
\end{theorem}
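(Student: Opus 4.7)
The plan is to exploit the classical equivalence between freestream preservation and the discrete GCL: show that every constant state $\vect{Q}\equiv\vect{Q}_0$ is preserved by the scheme~\eqref{eq:2DFR_new_ref}, and then read off~\eqref{eq:GCL_Sta} from the resulting pointwise identity.

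First I would verify that each term of~\eqref{eq:2DFR_new_ref} admits an exact polynomial representation under this freestream assumption. Since constants lie trivially in $\mathbb{Q}^k(\xi,\eta)$, the solution polynomial satisfies $\vect{Q}_h\equiv\vect{Q}_0$ exactly, so $\vect{F}_0:=\vect{F}(\vect{Q}_0)$ and $\vect{G}_0:=\vect{G}(\vect{Q}_0)$ are constants. By Table~\ref{Tab:metrics}, the analytic transformed fluxes $|J|\xi_x\vect{F}_0+|J|\xi_y\vect{G}_0\in P^{l}(\xi)\otimes P^{l-1}(\eta)$ and $|J|\eta_x\vect{F}_0+|J|\eta_y\vect{G}_0\in P^{l-1}(\xi)\otimes P^{l}(\eta)$ are strictly contained in the local-flux polynomial spaces from Table~\ref{Tab:Phy_Variables}. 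Hence the FR nodal interpolants reproduce $\widetilde{\vect{F}}^{\text{loc}}$ and $\widetilde{\vect{G}}^{\text{loc}}$ exactly as those analytic polynomials. Because the two one-sided traces at every interface are identical constants, the Rusanov common flux equals the local trace, so each jump in~\eqref{eq:flux_cor_2D} vanishes; this gives $\widetilde{\vect{F}}^{\text{cor}}\equiv\widetilde{\vect{G}}^{\text{cor}}\equiv 0$, and then~\eqref{eq:field_cor_ref_diff} yields $\widetilde{\vect{\delta}}^{\text{cor}}\equiv 0$. On a stationary grid $|J|$ is independent of $t$, so $\partial\widetilde{\vect{Q}}_h/\partial t=\partial(|J|\vect{Q}_0)/\partial t=0$.

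Substituting these ingredients into~\eqref{eq:2DFR_new_ref} and rearranging yields the pointwise identity
\begin{equation*}
\vect{F}_0\Bigl[\tfrac{\partial(|J|\xi_x)}{\partial\xi}+\tfrac{\partial(|J|\eta_x)}{\partial\eta}\Bigr] + \vect{G}_0\Bigl[\tfrac{\partial(|J|\xi_y)}{\partial\xi}+\tfrac{\partial(|J|\eta_y)}{\partial\eta}\Bigr] = \vect{0}.
\end{equation*}
To split the two brackets, I would instantiate the argument on scalar linear advection $\partial_t Q+\partial_x Q=0$ (so $\vect{F}_0=\vect{Q}_0,\ \vect{G}_0=\vect{0}$) and then on $\partial_t Q+\partial_y Q=0$; since the mesh is identical in both cases, each bracket must separately equal the zero polynomial, which is precisely the two rows of~\eqref{eq:GCL_Sta}. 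Because only the polynomial-space containment of the metrics (Table~\ref{Tab:metrics}) enters the argument, the conclusion is indifferent to whether the metrics are produced by the non-conservative formula~\eqref{eq:Jacob_inv_2D} or by a conservative construction of the type in~\cite{AbeEtAl_JCP_2015}.

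The main obstacle I anticipate is the decoupling step: for a given physical flux law, $\vect{F}_0$ and $\vect{G}_0$ are not algebraically independent, so the "arbitrariness" needed to kill each bracket individually must be justified by treating the discrete GCL as a purely geometric statement valid for any smooth hyperbolic system sharing the mesh, rather than by a rank argument internal to a single fixed PDE. Modulo that justification, every other step reduces to verifying containment of analytic polynomials in the spaces tabulated in Tables~\ref{Tab:metrics}--\ref{Tab:Phy_Variables} and invoking the definitions of $\widetilde{\vect{F}}^{\text{cor}}$, $\widetilde{\vect{G}}^{\text{cor}}$, and $\widetilde{\vect{\delta}}^{\text{cor}}$.
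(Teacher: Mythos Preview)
Your overall strategy---test the scheme on a constant state, show the correction field vanishes because the metric polynomials are exactly captured by the local-flux space, and reduce~\eqref{eq:2DFR_new_ref} to the two GCL brackets---is exactly the paper's route. The one substantive difference is your final decoupling step. You propose to kill the two brackets separately by instantiating on $x$- and $y$-advection, and you flag this as the ``main obstacle.'' The paper bypasses the issue entirely: once the metrics $|J|\xi_x,|J|\xi_y,|J|\eta_x,|J|\eta_y$ are shown to lie in $\mathbb{Q}^{k+2l-1}(\xi,\eta)$ and hence are represented \emph{exactly} by the nodal interpolant, their discrete $\xi$- and $\eta$-derivatives coincide with the analytic ones, and \textbf{Lemma}~\ref{lem:GCL_num} (a pure cross-derivative cancellation) gives each bracket equal to zero directly---no appeal to varying the PDE is needed. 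Your device of switching equations is valid but circuitous; the paper's observation that the brackets are zero as polynomial identities is both shorter and keeps the argument internal to a single fixed problem. A minor phrasing point: the interface traces of $\widetilde{\vect F}^{\text{loc}},\widetilde{\vect G}^{\text{loc}}$ are not ``identical constants'' (they carry the metrics); what actually makes the jumps in~\eqref{eq:flux_cor_2D} vanish is that the interpolated metric at the flux point equals the directly evaluated metric there, which is precisely the polynomial-containment fact you already used.
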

\begin{proof}
Consider the freestream with constant $\vect{Q}$. The fluxes $\vect{F}$ and $\vect{G}$ are also constant. Note that in a stationary domain, the Jacobian $|J|$ is not a function of time. Therefore, when $\vect{Q}$ is constant, the term $\partial \widetilde{\vect{Q}}_{\vect{h}}/\partial t$ in Eq.~\eqref{eq:2DFR_new_ref} is zero. In numerical discretization, values that are analytically zero can only be represented approximately, appearing as machine zeros due to finite precision. However, these values are typically several tens of orders of magnitude smaller than non-trivial numerical errors, such as those arising from GCL violations (see examples in Ref.~\cite{Yu2011}). Therefore, machine zeros are neglected in the following discussions to facilitate a cleaner and more transparent proof. We then have from Eq.~\eqref{eq:2DFR_new_ref} that
\begin{equation}
\begin{aligned}
 &\vect{F}^{\text{loc}} \left(
   \sum_j (|J| \xi_x)_j^{\text{SP}} \frac{\partial w_{j}^r(\vect{\xi})}{\partial \xi}
  +\sum_j (|J| \eta_x)_j^{\text{SP}} \frac{\partial w_{j}^r(\vect{\xi})}{\partial \eta}
  \right) \\ 
+ \ & \vect{G}^{\text{loc}} \left(
      \sum_j (|J| \xi_y)_j^{\text{SP}} \frac{\partial w_{j}^r(\vect{\xi})}{\partial \xi}
      +\sum_j (|J| \eta_y)_j^{\text{SP}} \frac{\partial w_{j}^r(\vect{\xi})}{\partial \eta}
      \right) + \widetilde{\vect{\delta}}^{\text{cor}} = 0,
\end{aligned}
\label{eq:GCL_dis}
\end{equation}
with $w_j^r (\vect{\xi}) = w_j^r (\xi,\eta) \in \mathbb{Q}^{k+2l-1} (\xi,\eta)$, which is the basis function constructed via tensor product of 1D Lagrange polynomials rooted on solution points, and 
\begin{equation*}
\widetilde{\vect{F}}^{\text{loc}} (\vect{\xi}) = \sum_j (\widetilde{\vect{F}}^{\text{loc}})_{j}^{\text{SP}} w_{j}^r(\vect{\xi}), \
\widetilde{\vect{G}}^{\text{loc}} (\vect{\xi}) = \sum_j (\widetilde{\vect{G}}^{\text{loc}})_{j}^{\text{SP}} w_{j}^r(\vect{\xi}).
\end{equation*}
The superscript `SP' in Eq.~\eqref{eq:GCL_dis} indicates that the function is calculated at solution points.

We first prove that $\widetilde{\vect{\delta}}^{\text{cor}} = 0$.
Since $\vect{F}$ and $\vect{G}$ are constant, we have $\vect{F}_n^{\text{com}} = \vect{F}_n^{\text{loc}}$. 
Without loss of generality, check the term $\widetilde{\vect{F}}_L^{\text{num}}  (\eta)
- \widetilde{\vect{F}}^{\text{loc}}  (-1, \eta)$ in $\widetilde{\vect{F}}^{\text{cor}}  (\xi, \eta)$ from Eq.~\eqref{eq:flux_cor_2D}. It can be written as
\[
\begin{aligned}
\widetilde{\vect{F}}_L^{\text{num}}  (\eta)
- \widetilde{\vect{F}}^{\text{loc}}  (-1, \eta) & =
\vect{F}^{\text{loc}} (-1, \eta) \left[ (|J| \xi_x)^{\text{FP}}(-1, \eta) - (|J| \xi_x)^{\text{SP,I}}(-1,\eta) \right] \\ & +
\vect{G}^{\text{loc}} (-1, \eta) \left[ (|J| \xi_y)^{\text{FP}}(-1, \eta) - (|J| \xi_y)^{\text{SP,I}}(-1,\eta) \right].
\end{aligned}
\]
Herein, the superscript `FP' indicates that the function is calculated at flux points, and `SP,I' indicates that the function is first calculated at solution points and then interpolated from solution points to the corresponding flux points. 

Based on Table~\ref{Tab:metrics}, we have $|J| \xi_x, |J| \xi_y \in P^l(\xi) \otimes P^{l-1}(\eta)$. To construct the FR scheme~\eqref{eq:2DFR_new_ref}, the polynomial space $\mathbb{Q}^{k+2l-1} (\xi,\eta)$ with $k+2l$ Gauss--Legendre quadrature points (i.e. solution points in FR) in each dimension is used. Therefore, $|J| \xi_x$ and $|J| \xi_y$ can be exactly evaluated at any point within the curvilinear element when they are constructed as polynomials analytically. As a result, we have
$(|J| \xi_{x/y})^{\text{FP}} = (|J| \xi_{x/y})^{\text{SP,I}}$, and $\widetilde{\vect{F}}_L^{\text{num}}  (\eta)
- \widetilde{\vect{F}}^{\text{loc}}  (-1, \eta) = 0$. Furthermore, we have $\widetilde{\vect{F}}^{\text{cor}} = 0$, $\widetilde{\vect{G}}^{\text{cor}} = 0$, and thus, $\widetilde{\vect{\delta}}^{\text{cor}} = 0$.

Now Eq.~\eqref{eq:GCL_dis} is reduced to
\begin{equation*}
\begin{aligned}
 &\vect{F}^{\text{loc}} \left(
   \sum_j (|J| \xi_x)_j^{\text{SP}} \frac{\partial w_{j}(\vect{\xi})}{\partial \xi}
  +\sum_j (|J| \eta_x)_j^{\text{SP}} \frac{\partial w_{j}(\vect{\xi})}{\partial \eta}
  \right) \\ 
+ \ & \vect{G}^{\text{loc}} \left(
      \sum_j (|J| \xi_y)_j^{\text{SP}} \frac{\partial w_{j}(\vect{\xi})}{\partial \xi}
      +\sum_j (|J| \eta_y)_j^{\text{SP}} \frac{\partial w_{j}(\vect{\xi})}{\partial \eta}
      \right) = 0.
\end{aligned}
\end{equation*}

Since $|J| \xi_x, |J| \xi_y \in P^l(\xi) \otimes P^{l-1}(\eta)$ and $|J| \eta_x, |J| \eta_y \in P^{l-1}(\xi) \otimes P^l(\eta)$, they can be fully represented with $w_j^r (\vect \xi)$. Therefore, taking $|J| \xi_x$ as an example, we have
\begin{equation}
(|J| \xi_x) (\vect{\xi}) = \sum_j (|J| \xi_x)_j^{\text{SP}} w_{j}^r(\vect{\xi}),
\label{eq:J_xi}
\end{equation}
and
\begin{equation}
\frac{\partial (|J| \xi_x) (\vect{\xi})}{\partial \xi} = \sum_j (|J| \xi_x)_j^{\text{SP}} \frac{\partial w_{j}^r(\vect{\xi})}{\partial \xi}.
\label{eq:dJ_dxi}
\end{equation}
Note that $(|J| \xi_{x})_j^{\text{SP}}$ used in Eq.~\eqref{eq:J_xi} and~\eqref{eq:dJ_dxi} can be calculated either with the non-conservative approach (see Eq.~\eqref{eq:Jacob_inv_2D}) or with the conservative approach (see Ref.~\cite{AbeEtAl_JCP_2015}). Similar results apply to $|J| \xi_y$, $|J| \eta_x$, and $|J| \eta_y$.  

On plugging Eq.~\eqref{eq:J_xi} and~\eqref{eq:dJ_dxi} in Eq.~\eqref{eq:GCL_dis}, we have
\begin{equation}
 \vect{F}^{\text{loc}} \left(
    \frac{\partial |J| \xi_x}{\partial \xi}
  + \frac{\partial |J| \eta_x}{\partial \eta}
  \right)
+ \vect{G}^{\text{loc}} \left(
      \frac{\partial|J| \xi_y}{\partial \xi}
      +\frac{\partial |J| \eta_y}{\partial \eta}
      \right) = 0.
\end{equation}

Since the geometric features of the curvilinear element are fully represented by polynomials in the FR scheme, based on \textbf{Lemma}~\ref{lem:GCL_num}, we have $\partial |J| \xi_x/\partial \xi + \partial |J| \eta_x/\partial \eta= 0$, and $\partial |J| \xi_y/\partial \xi + \partial |J| \eta_y/\partial \eta= 0$.

This proves that for freestream the GCL~\eqref{eq:GCL_Sta} held numerically. 

\end{proof}

With \textbf{Theorem}~\ref{pro:sta}, the FR scheme~\eqref{eq:2DFR_new_ref} can be numerically converted back to its physical domain format as 
\begin{equation} \label{eq:2DFR_new_phy}
\frac{\partial \vect{Q}_{\vect{h}}}{\partial t} +
\frac{\partial
\vect{F}^{\text{loc}}}{\partial x} +
\frac{\partial
\vect{G}^{\text{loc}}}{\partial y}+ 
\vect{\delta}^{\text{cor}} = 0.
\end{equation}
%


Key procedures to implement Eq.~\eqref{eq:2DFR_new_phy} are summarized in \textbf{Key Procedures}~\ref{alg:2DFR_key_new}.
%
Note that the FR scheme~\eqref{eq:2DFR_new_phy}  is different from Eq.~\eqref{eq:FR_Diff} discussed in \textbf{Sect.}~\ref{subsubsec:RefD} as in Eq.~\eqref{eq:2DFR_new_phy} variables describing flow physics and those describing geometry are approximated by separate polynomials with $\widetilde{\vect{Q}}$ served as the hidden working variable. 


%

\begin{algorithm}[!htbp]
\caption{A Hybrid Reference-Physics Domain Approach}
\label{alg:2DFR_key_new}
\begin{algorithmic} [1]
\State  
The terms $\partial \vect{F}^{\text{loc}} / \partial x $ and $\partial \vect{G}^{\text{loc}} / \partial y $ on solution points in each dimension can be calculated from
\begin{equation*}
\frac{\partial \vect{F}^{\text{loc}}}{\partial x} =
\frac{\partial \vect{F}^{\text{loc}}}{\partial \xi} \xi_x + 
\frac{\partial \vect{F}^{\text{loc}}}{\partial \eta} \eta_x, \quad
\frac{\partial \vect{G}^{\text{loc}}}{\partial y} =
\frac{\partial \vect{G}^{\text{loc}}}{\partial \xi} \xi_y + 
\frac{\partial \vect{G}^{\text{loc}}}{\partial \eta} \eta_y.
\end{equation*}

\noindent \Comment{Although $\partial \xi_i / \partial x_j$ are not polynomials, $|J| \partial \xi_i / \partial x_j$ are polynomials sitting either in $P^l(\xi) \otimes P^{l-1}(\eta)$ or in $P^{l-1}(\xi) \otimes P^l(\eta)$.} This numerically ensures that
\begin{equation*}
|J| \left(
\frac{\partial \vect{F}^{\text{loc}}}{\partial x} +
\frac{\partial \vect{G}^{\text{loc}}}{\partial y} 
\right) =
\frac{\partial \widetilde{\vect{F}}^{\text{loc}}}{\partial \xi} +
\frac{\partial \widetilde{\vect{G}}^{\text{loc}}}{\partial \eta}
\end{equation*}
holds on solution points in each dimension.

\State
Use the univariate correction functions $g_{L/R} (*) $ for $\widetilde{\vect{F}}^{\text{cor}}$ and $\widetilde{\vect{G}}^{\text{cor}}$ in Eq.~\eqref{eq:flux_cor_2D}, and evaluate $\widetilde{\vect{\delta}}^{\text{cor}}$ from Eq.~\eqref{eq:field_cor_ref_diff}. Then $\vect{\delta}^{\text{cor}}$ can be calculated from $\vect{\delta}^{\text{cor}} = \widetilde{\vect{\delta}}^{\text{cor}}/|J|$ on the solution points in each dimension.

\noindent \Comment{Only the values of $g^{\prime}_{L/R} (*)$ are needed on 1D solution points. These values can be stored in pre-processing.}

\end{algorithmic}
\end{algorithm}

Before ending this section, we briefly compare the consistent symmetric conservative metric (CSC) construction approach in~\cite{AbeEtAl_JCP_2015} and the new approach developed above. Both approaches have the global conservation and freestream preservation properties. The differences are summarized as follows: 
\begin{itemize}
    \item Different from the CSC approach, metrics in the new FR approach can be calculated with either the non-conservative formulation or the conservative one.
    \item The metrics in the CSC approach are reconstructed from their exact values to match the order of the accuracy of physical variables. However, the metrics in the new FR approach here are exact, and their polynomial order is independent from that of physical variables.
\end{itemize}

\subsubsection{Space-time moving curvilinear elements} \label{subsubsec:ST_MCE}
For space-time curvilinear elements over a moving domain, let $x,y \in \mathbb{Q}^l (\xi, \eta) \otimes P^n (\tau)$, $t \in P^n (\tau)$. Thus, the Jacobian $|J| = t_{\tau} (x_{\xi} y_{\eta} - x_{\eta} y_{\xi}) \in \mathbb{Q}^{2l-1} (\xi, \eta) \otimes P^{3n-1} (\tau)$. The polynomial space where the Jacobian and metrics sit for a tensor-product curvilinear space-time element are summarized in Table~\ref{Tab:ST_metrics}.

\begin{table}[!htbp] 
\centering
\begin{tabular}{|c|c|c|c|}
\hline
    & $|J|$ & $|J|\xi_x$, $|J|\xi_y$ & $|J|\eta_x$, $|J|\eta_y$ \\
\hline
PS &  $\mathbb{Q}^{2l-1} (\xi,\eta) \otimes P^{3n-1} 
      (\tau)$ 
   & $P^l(\xi) \otimes P^{l-1}(\eta) \otimes P^{2n-1}(\tau)$ 
   & $P^{l-1}(\xi) \otimes P^l(\eta) \otimes P^{2n-1}(\tau)$ \\ 
\hline
\hline
    & $|J| \tau_t$ & $|J|\xi_t$ & $|J|\eta_t$ \\
\hline
PS &  $\mathbb{Q}^{2l-1} (\xi,\eta) \otimes P^{2n} 
      (\tau)$ 
   & $P^{2l}(\xi) \otimes P^{2l-1}(\eta) \otimes P^{2n-1}(\tau)$ 
   & $P^{2l-1}(\xi) \otimes P^{2l}(\eta) \otimes P^{2n-1}(\tau)$ \\ 
\hline
\end{tabular}
\caption{A list of polynomial space occupied by the Jacobian and metrics of a space-time moving curvilinear element with $x,y \in \mathbb{Q}^l (\xi, \eta) \otimes P^n (\tau)$ and $t \in P^n (\tau)$. Note that $l \geq 1$ and $n \geq 1$.}
\label{Tab:ST_metrics}
\end{table}

To numerically solve Eq.~\eqref{eq:GovEq_2D_Comp}, a pseudo-time term $\partial |J| \vect{Q}_{\vect{h}} / \partial \widetilde{t} $ is augmented to the governing equation. As a result, the STFR scheme can be written as
\begin{equation} \label{eq:STFR_new_ref}
\frac{\partial |J| \vect{Q}_{\vect{h}}}{\partial \widetilde{t}} +
\frac{\partial \widetilde{\vect{Q}}^{\text{loc}}}{\partial \tau} +
\frac{\partial \widetilde{\vect{F}}^{\text{loc}}}{\partial \xi} +
\frac{\partial \widetilde{\vect{G}}^{\text{loc}}}{\partial \eta}+ 
\widetilde{\vect{\delta}}^{\text{cor,st}} = 0,
\end{equation}
with
 \begin{equation}  \label{eq:ST_field_cor_ref_diff}
 \widetilde{\vect{\delta}}^{\text{cor,st}} = 
 \frac{\partial \widetilde{\vect{Q}}^{\text{cor}}}{\partial \tau} +
 \frac{\partial \widetilde{\vect{F}}^{\text{cor}}}{\partial \xi} +
 \frac{\partial \widetilde{\vect{G}}^{\text{cor}}}{\partial \eta} .
\end{equation}
Since $t$ is only a function of $\tau$, we have from Eq.~\eqref{eq:ST_Var_ref} that $\widetilde{\vect{Q}}^{\text{loc}} = |J| \tau_t \vect{Q}^{\text{loc}}$. Define $Q_{h,i} (\xi, \eta, \tau) \in \mathbb{Q}^k (\xi, \eta) \otimes P^m (\tau)$. Then following the procedures explained in \textbf{Sect.}~\ref{subsubsec:Stat_CE}, the STFR scheme~\eqref{eq:STFR_new_ref} can be converted back to its physical domain format as 
\begin{equation} \label{eq:STFR_new_phy}
\frac{\partial \vect{Q}_{\vect{h}}}{\partial \widetilde{t}} +
\frac{\partial \vect{Q}^{\text{loc}}}{\partial t} +
\frac{\partial \vect{F}^{\text{loc}}}{\partial x} +
\frac{\partial \vect{G}^{\text{loc}}}{\partial y} + 
\vect{\delta}^{\text{cor,st}} = 0.
\end{equation}
Similar to Table~\ref{Tab:Phy_Variables}, the polynomial degrees of solutions, fluxes and correction field used in the implementation of Eq.~\eqref{eq:STFR_new_phy} are summarized in Table~\ref{Tab:ST_Variables}.

\begin{table}[!htbp]
\caption{A list of polynomial space occupied by the Jacobian and metrics of a space-time moving curvilinear element with $x,y \in \mathbb{Q}^l (\xi, \eta) \otimes P^n (\tau)$ and $t \in P^n (\tau)$.}
\label{Tab:ST_Variables}
\centering
\begin{threeparttable}
\begin{tabular}{|c|c|c|}
\hline
   & $\vect{Q}_{\vect{h}}$ & 
     $|J| \vect{Q}_{\vect{h}}$\\ \hline
PS & $\mathbb{Q}^k (\xi, \eta) \otimes P^m (\tau)$ &
     $\mathbb{Q}^{k+2l-1} (\xi, \eta) \otimes P^{m+3n-1} (\tau)$ \\ \hline
\hline
   & $\vect{\delta}^{\text{cor,st}}$ &
     $\widetilde{\vect{\delta}}^{\text{cor,st}}$ \\ \hline
PS & $\mathbb{Q}^k (\xi, \eta) \otimes P^m (\tau)$ &
     $\mathbb{Q}^{k+2l-1} (\xi, \eta) \otimes P^{m+3n-1} (\tau)$ \\ \hline
\hline
   & $\vect{Q}^{\text{loc}}$ &
   $\widetilde{\vect{Q}}^{\text{loc}},
   \widetilde{\vect{Q}}^{\text{cor}}$\\ \hline
PS & $\mathbb{Q}^k (\xi, \eta) \otimes P^{m+n} (\tau)$\tnote{*} &
     $\mathbb{Q}^{k+2l-1} (\xi, \eta) \otimes P^{m+3n} (\tau)$\\ \hline
\hline
   & $\vect{F}^{\text{loc}}$ &
   $\widetilde{\vect{F}}^{\text{loc}},
   \widetilde{\vect{F}}^{\text{cor}}$\\ \hline
PS & $\mathbb{Q}^{k+l} (\xi, \eta) \otimes P^{m+n} (\tau)$ &
     $P^{k+2l} (\xi) \otimes P^{k+2l-1} (\eta) \otimes P^{m+3n-1} (\tau)$\\ \hline
\hline
   & $\vect{G}^{\text{loc}}$ &
   $\widetilde{\vect{G}}^{\text{loc}},
   \widetilde{\vect{G}}^{\text{cor}}$\\ \hline
PS & $\mathbb{Q}^{k+l} (\xi, \eta) \otimes P^{m+n} (\tau)$ &
     $P^{k+2l-1} (\xi) \otimes P^{k+2l} (\eta) \otimes P^{m+3n-1} (\tau)$\\ \hline
\end{tabular}
\begin{tablenotes}
    \item[*] This is used to assist understanding the numerical effect of the temporal correction field.
\end{tablenotes}
\end{threeparttable}
\end{table}

Moreover, analogous to \textbf{Theorem}~\ref{pro:sta}, we have the following conclusion:
\begin{theorem} \label{pro:moving}
The FR scheme~\eqref{eq:STFR_new_ref} with solution, flux and correction field polynomial degrees following those listed in Table~\ref{Tab:ST_Variables}, and with a spatial order higher than two (i.e. $k \geq 1$),
numerically satisfies the GCL~\eqref{eq:GCL},
no matter the metrics are calculated with the non-conservative or conservative approaches~\cite{AbeEtAl_JCP_2015}.  Moreover, under the conditions listed above, Eq.~\eqref{eq:STFR_new_ref} is numerically equivalent to Eq.~\eqref{eq:STFR_new_phy}.
\end{theorem}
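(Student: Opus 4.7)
The plan is to mirror the proof of \textbf{Theorem}~\ref{pro:sta} in the space-time setting, with three additional ingredients to handle: (i) the extra conservation equations of~\eqref{eq:GCL} involving time metrics, (ii) the pseudo-time term in the STFR scheme, and (iii) the role of the condition $k\ge 1$. First, I would specialize to freestream, taking $\vect{Q}$ spatially and temporally constant, so that $\vect{F},\vect{G}$ are constant as well. Because freestream is a stationary state of the physical law, the pseudo-time derivative $\partial \vect{Q}_{\vect h}/\partial\widetilde t$ vanishes at pseudo-steady state (up to machine zeros, as in the stationary case), so Eq.~\eqref{eq:STFR_new_ref} reduces to a purely geometric identity that must hold for the scheme to preserve freestream.

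The main body of the argument would then proceed in two stages. In the first stage, I would show $\widetilde{\vect{\delta}}^{\text{cor,st}}=0$ by inspecting each of the three correction fluxes in~\eqref{eq:ST_field_cor_ref_diff}. Using Eq.~\eqref{eq:ST_Flux_Com} together with $\vect{F}_n^{\text{st,com}}=\vect{F}_n^{\text{st,loc}}$ under freestream, each bracketed difference at an element face becomes a linear combination of $\vect{Q},\vect{F},\vect{G}$ weighted by $\bigl[(|J|\alpha_\beta)^{\text{FP}}-(|J|\alpha_\beta)^{\text{SP,I}}\bigr]$ for $\alpha\in\{\tau,\xi,\eta\}$ and $\beta\in\{t,x,y\}$. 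By Table~\ref{Tab:ST_metrics}, every such metric is a polynomial; the critical check is that its degree along each coordinate does not exceed the degree at which solution-point interpolation is exact. Comparing Table~\ref{Tab:ST_metrics} with Table~\ref{Tab:ST_Variables}, the most demanding entry is $|J|\xi_t\in P^{2l}(\xi)\otimes P^{2l-1}(\eta)\otimes P^{2n-1}(\tau)$, whose $\xi$-degree $2l$ must be matched by the $k+2l-1$ solution points used in the $\xi$-direction; this requires $k+2l-1\ge 2l$, i.e.\ $k\ge 1$. This is where the ``spatial order higher than two'' hypothesis enters. A symmetric check for $|J|\eta_t$ along $\eta$ and for the spatial metrics (which have lower degrees and are automatically covered) completes the interpolation step, yielding $(|J|\alpha_\beta)^{\text{FP}}=(|J|\alpha_\beta)^{\text{SP,I}}$ and hence $\widetilde{\vect{Q}}^{\text{cor}}=\widetilde{\vect{F}}^{\text{cor}}=\widetilde{\vect{G}}^{\text{cor}}=0$.

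In the second stage, with the correction field eliminated, Eq.~\eqref{eq:STFR_new_ref} collapses to
\begin{equation*}
\vect{Q}\,\Bigl(\tfrac{\partial |J|\tau_t}{\partial\tau}+\tfrac{\partial |J|\xi_t}{\partial\xi}+\tfrac{\partial |J|\eta_t}{\partial\eta}\Bigr)
+\vect{F}\,\Bigl(\tfrac{\partial |J|\tau_x}{\partial\tau}+\tfrac{\partial |J|\xi_x}{\partial\xi}+\tfrac{\partial |J|\eta_x}{\partial\eta}\Bigr)
+\vect{G}\,\Bigl(\tfrac{\partial |J|\tau_y}{\partial\tau}+\tfrac{\partial |J|\xi_y}{\partial\xi}+\tfrac{\partial |J|\eta_y}{\partial\eta}\Bigr)=0,
\end{equation*}
evaluated on solution points via nodal differentiation of $w_j^r$. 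Each of the three parenthesized expressions is a polynomial in $(\xi,\eta,\tau)$ whose degree, by Table~\ref{Tab:ST_metrics}, lies within $\mathbb{Q}^{k+2l-1}(\xi,\eta)\otimes P^{m+3n-1}(\tau)$ provided $k\ge 1$, so nodal differentiation agrees with analytic differentiation. Invoking \textbf{Lemma}~\ref{lem:GCL_num}, each analytic parenthesis is identically zero, establishing the three lines of the space-time GCL~\eqref{eq:GCL}. Since this argument only uses that the metrics are polynomials of the degrees listed in Table~\ref{Tab:ST_metrics}, it is indifferent to whether non-conservative or conservative metric evaluations are used.

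Finally, for the equivalence of~\eqref{eq:STFR_new_ref} and~\eqref{eq:STFR_new_phy}, I would argue as in \textbf{Key Procedures}~\ref{alg:2DFR_key_new}: because the discrete GCL holds at every solution point, the chain-rule identity $|J|(\partial_t\vect{Q}^{\text{loc}}+\partial_x\vect{F}^{\text{loc}}+\partial_y\vect{G}^{\text{loc}})=\partial_\tau\widetilde{\vect{Q}}^{\text{loc}}+\partial_\xi\widetilde{\vect{F}}^{\text{loc}}+\partial_\eta\widetilde{\vect{G}}^{\text{loc}}$ holds nodally, and $\vect{\delta}^{\text{cor,st}}=\widetilde{\vect{\delta}}^{\text{cor,st}}/|J|$ converts the correction field. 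The principal obstacle in executing this plan is the careful accounting of polynomial degrees across all three coordinates in Stage~1 — in particular, identifying $|J|\xi_t$ and $|J|\eta_t$ as the binding constraints that force $k\ge 1$, and verifying that $k+2l-1$ Gauss–Legendre solution points in each spatial direction still exactly interpolate them when they reach degree $2l$. Everything else is a bookkeeping extension of the stationary argument.
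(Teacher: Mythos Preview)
Your proposal is correct and follows essentially the same route as the paper: specialize to freestream, show $\widetilde{\vect{\delta}}^{\text{cor,st}}=0$ by checking that every metric polynomial in Table~\ref{Tab:ST_metrics} is exactly representable in the solution-point space of Table~\ref{Tab:ST_Variables}, then invoke \textbf{Lemma}~\ref{lem:GCL_num} on the remaining local-flux terms. Two small points to tighten: the paper explicitly records \emph{both} degree conditions $k+2l-1\ge 2l$ and $m+3n-1\ge 2n$ (the latter automatic since $n\ge 1$, $m\ge 0$), so you should state and dispatch the temporal one rather than leave it implicit; and note a wording slip --- the spatial solution space has degree $k+2l-1$ (hence $k+2l$ Gauss--Legendre points), not ``$k+2l-1$ solution points,'' though your inequality $k+2l-1\ge 2l$ is the right one.
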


\begin{proof}
The proof is analogous to that for \textbf{Theorem}~\ref{pro:sta} by adding the numerical representation of $\partial \widetilde{\vect{Q}}^{\text{loc}}/\partial \tau$, and that of the temporal flux (i.e. solution) $\vect{Q}$-related terms in  $\widetilde{\vect{F}}^{\text{loc}}$ and $\widetilde{\vect{G}}^{\text{loc}}$. Again, consider the freestream with constant $\vect{Q}$. Thus, Eq.~\eqref{eq:GCL_dis} is augmented with the following term
\begin{equation}
\vect{Q}^{\text{loc}} \left(
   \sum_j (|J| \tau_t)_j^{\text{SP}} \frac{\partial w_{j}^r(\vect{\xi}, \tau)}{\partial \tau}
  +\sum_j (|J| \xi_t)_j^{\text{SP}} \frac{\partial w_{j}^r(\vect{\xi}, \tau)}{\partial \xi}
  +\sum_j (|J| \eta_t)_j^{\text{SP}} \frac{\partial w_{j}^r(\vect{\xi}, \tau)}{\partial \eta}
  \right).
\end{equation}

Different from the GCL~\eqref{eq:GCL_Sta} used in \textbf{Theorem}~\ref{pro:sta}, the first formula in the GCL~\eqref{eq:GCL} needs the following two equations
\begin{equation}
\label{eq:GCL_Cond_strong}
    k+2l-1 \geq 2l, \ \text{and} \ m+3n-1 \geq 2n
\end{equation}
to hold to ensure that $|J| \tau_t$, $|J| \xi_t$, and $|J| \eta_t$ can be exactly evaluated at any point within the curvilinear space-time element when they are constructed as polynomials analytically. Since $n \geq 1$ for any space-time element (i.e. $\tau$ is at least a linear function of $t$), we have $n = 3n - 2n \geq 1 \geq 1 - m $, when $m \geq 0$. This means that the condition $m+3n-1 \geq 2n$ always holds. Therefore, the only condition that needs to meet is $k \geq 1$.  

\end{proof}

Compared to the stationary grid case, special attention is needed when evaluating $\vect{Q}^{\text{loc}}$ and $\widetilde{\vect{Q}}^{\text{cor}}$ from $\vect{Q}_{\vect{h}}$. 
Note that there are $m+3n$ solution points along the time dimension in the present space-time framework.
As shown in Table~\ref{Tab:ST_Variables}, to ensure that the polynomial approximations of $\widetilde{\vect{Q}}^{\text{loc}}$ and $\widetilde{\vect{Q}}^{\text{cor}}$ are consistent with those of $\widetilde{\vect{F}}^{\text{loc}/\text{cor}}$ and $\widetilde{\vect{G}}^{\text{loc}/\text{cor}}$ to ease analysis and implementation,
$\vect{Q}^{\text{loc}}$ can always be treated as degree $m+n$ polynomials along the time dimension. 
Thus, the degree $P^{m+3n}$ correction function $g_{L/R} (\tau)$ is used for $\widetilde{\vect{Q}}^{\text{cor}}$ to completely eliminate the aliasing errors caused by the Jacobian $|J|$. As a result, we have the following observation regarding $\vect{Q}^{\text{loc}}$ and $\vect{Q}_{\vect{h}}$.

\begin{remark}
Since  $g_{L/R} (\tau) \in P^{m+3n} (\tau)$, this indicates that $\vect{Q}^{\text{loc}} \in \mathbb{Q}^k (\xi, \eta) \otimes P^{m+n} (\tau)$ essentially. This high-order information has been propagated to $\vect{Q}_{\vect{h}}$ through the correction field to enhance its accuracy. 
\end{remark}


\subsubsection{Discussions and aliasing error control} \label{subsubsec:new_discuss}

According to Table~\ref{Tab:ST_Variables}, the STFR formulation for moving curvilinear elements can suffer from the so-called ``curse of dimensionality", especially in the time dimension. There are two ways to fix this issue. As the first one, the CSC approach in~\cite{AbeEtAl_JCP_2015} may provide a solution by matching the polynomial degrees of symmetric conservative metrics to that of physical variables. As the other, we can find solutions by re-examining \textbf{Theorem}~\ref{pro:sta} and \ref{pro:moving}. 

Note that when considering the GCL~\eqref{eq:GCL}, the solution $\vect{Q}$, and fluxes $\vect{F}$ and $\vect{G}$ are treated as constants. For moving grid simulations, based on Table~\ref{Tab:ST_metrics}, the most demanding GCL formulation is
\begin{equation*}
\frac{\partial |J| \tau_t }{\partial \tau}
+
\frac{\partial |J| \xi_t}{\partial \xi}
+
\frac{\partial |J| \eta_t}{\partial \eta}
= 0.
\end{equation*}
We can use the basis function $w_j^r (\xi,\eta, \tau) \in \mathbb{Q}^{2l} (\xi,\eta) \otimes P^{2n} (\tau)$, with $x,y \in \mathbb{Q}^l (\xi, \eta) \otimes P^n (\tau)$ and $t \in P^n (\tau)$, in the discrete GCL when constructing metrics. Since $|J| \tau_t$, $|J| \xi_t$, and $|J| \eta_t$ are at most of degree $2l$ in $\xi$ and $\eta$ directions, and at most of degree $2n$ in the $\tau$ direction, they can be exactly evaluated at any point within the curvilinear space-time element when they are constructed as polynomials analytically. As a result, derivations related to the discrete GCL in \textbf{Theorem}~\ref{pro:sta} and \ref{pro:moving} hold. 
This gives the following conclusion:
\begin{proposition} \label{pro:moving_reduced}
To numerically satisfy 
the GCL~\eqref{eq:GCL} by using exact metrics of curvilinear moving grids with a nominal solution approximation $Q_{h,i} (\xi, \eta, \tau) \in \mathbb{Q}^k (\xi, \eta) \otimes P^m (\tau)$, the basis function $w_j^r (\xi,\eta, \tau)$ should satisfy the following requirement:
\begin{equation} \label{eq:order_req}
    w_j^r (\xi,\eta, \tau) \in \mathbb{Q}^{\alpha} (\xi,\eta) \otimes P^{\beta} (\tau), \ \text{where} \ \alpha = \text{max} \ (2l, k) \ \text{and} \ \beta = \text{max} \ (2n, m).
\end{equation}
\end{proposition}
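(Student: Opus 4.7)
The plan is to mirror the proof structure of \textbf{Theorem}~\ref{pro:moving}, but track only the polynomial degrees of the \emph{metrics} (not the products of solution and metrics), since the GCL check is carried out on a freestream where $\vect{Q}$, $\vect{F}$, $\vect{G}$ are taken as constants. First I would revisit Table~\ref{Tab:ST_metrics} and extract the tightest polynomial envelope that covers all six GCL metric terms $|J|\tau_t$, $|J|\xi_t$, $|J|\eta_t$, $|J|\xi_x$, $|J|\eta_x$ (and the $y$-counterparts). A direct inspection shows that $|J|\xi_t$ and $|J|\eta_t$ are the most demanding in the spatial directions, requiring degree $2l$ in $\xi$ and $\eta$, while $|J|\tau_t$ is the most demanding in time, requiring degree $2n$ in $\tau$. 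The purely spatial metrics and the $|J|\tau_t$ term fit comfortably inside these bounds. Hence any basis space that exactly contains $\mathbb{Q}^{2l}(\xi,\eta)\otimes P^{2n}(\tau)$ is sufficient to represent all GCL-relevant metrics exactly.

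Next I would couple this envelope with the minimum degree needed to carry the nominal solution $Q_{h,i}\in\mathbb{Q}^{k}(\xi,\eta)\otimes P^{m}(\tau)$. Since both the metric envelope and the solution envelope must simultaneously fit inside $w_j^r$, the requirement naturally takes the componentwise maximum, yielding $\alpha=\max(2l,k)$ and $\beta=\max(2n,m)$. At this point I would replay the key step from the proof of \textbf{Theorem}~\ref{pro:moving}: because each $|J|\xi_t$, $|J|\eta_t$, $|J|\tau_t$ (and their spatial counterparts) lies inside the basis space, the nodal interpolant at the solution points is exact, so $(|J|\xi_{t})^{\text{FP}}=(|J|\xi_{t})^{\text{SP,I}}$ on every face, which makes the freestream correction fluxes in Eq.~\eqref{eq:flux_cor} vanish and therefore $\widetilde{\vect{\delta}}^{\text{cor,st}}=0$.

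With the correction term gone, the remaining interior balance reduces to the discrete divergence of the exactly-represented metrics. I would then expand each $|J|\xi_t$ (etc.) in the basis $w_j^r$ exactly as in Eqs.~\eqref{eq:J_xi}--\eqref{eq:dJ_dxi}, so that the sums of derivatives at solution points collapse onto the analytical derivatives of the metrics. Applying \textbf{Lemma}~\ref{lem:GCL_num} then closes the argument, since the geometric features of the space-time element are fully represented by polynomials and the GCL~\eqref{eq:GCL} holds analytically. A brief remark would note that the conservative versus non-conservative metric calculation is immaterial here, as the argument only uses the fact that the metrics, once assigned at the solution points, can be lifted to the basis space without error.

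The main obstacle, as in \textbf{Theorem}~\ref{pro:moving}, is bookkeeping: making sure that taking $\alpha=\max(2l,k)$ and $\beta=\max(2n,m)$ is simultaneously sufficient for (i) exact polynomial representation of every metric appearing in all three GCL equations and (ii) exact interpolation of these metrics from solution points to flux points on every face of the space-time element. The first is a matter of comparing tensor-product degrees row by row in Table~\ref{Tab:ST_metrics}; the second requires checking that Gauss--Legendre nodal interpolation on $\alpha{+}1$ (resp.\ $\beta{+}1$) points in each direction reproduces any polynomial in the corresponding metric space, which is immediate once the degree condition~\eqref{eq:order_req} is satisfied. No lower bound on $n$ is needed here (in contrast with the condition $k\geq 1$ in \textbf{Theorem}~\ref{pro:moving}), because $\beta\geq 2n$ automatically subsumes the temporal constraint that previously forced $k\geq 1$ via $k+2l-1\geq 2l$.
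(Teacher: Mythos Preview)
Your proposal is correct and follows essentially the same approach as the paper: identify from Table~\ref{Tab:ST_metrics} that the most demanding metric terms ($|J|\xi_t$, $|J|\eta_t$, $|J|\tau_t$) require at most degree $2l$ in each spatial direction and $2n$ in time, take the componentwise maximum with the nominal solution degrees $(k,m)$, and then invoke the machinery of \textbf{Theorem}~\ref{pro:moving} and \textbf{Lemma}~\ref{lem:GCL_num}. Your write-up is in fact more detailed than the paper's brief justification preceding the proposition; the only minor slip is in the final remark, where the constraint $k+2l-1\geq 2l$ from \textbf{Theorem}~\ref{pro:moving} is a \emph{spatial} constraint (subsumed here by $\alpha\geq 2l$), not a temporal one as you suggest.
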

This condition should not be confused with Eq.~\eqref{eq:GCL_Cond_strong} used in \textbf{Theorem}~\ref{pro:moving}.  Note that $\widetilde{\vect{Q}} = |J| \vect{Q}$ is used as the hidden working variable when implementing Eq.~\eqref{eq:2DFR_new_phy} or~\eqref{eq:STFR_new_phy} strictly following the polynomial degree requirements listed in Table~\ref{Tab:Phy_Variables} or~\ref{Tab:ST_Variables}. Therein, $\alpha = k+2l-1 \geq \text{max} (2l,k)$ when $k \geq 1$, and $\beta = m+3n-1 \geq \text{max} (2n,m)$, considering the fact that $l \geq 1$ and $n \geq 1$. We also mention that \textbf{Proposition}~\ref{pro:moving_reduced} gives concrete numerical conditions that satisfy \textbf{Lemma}~\ref{lem:GCL_num}.

Note that the requirement~\eqref{eq:order_req} is similar to those required to satisfy freestream preservation conditions for curvilinear stationary grids in the work by Abe \textit{et al.}~\cite{AbeEtAl_JCP_2015}. However, we have the following conclusion regarding the requirement~\eqref{eq:order_req}.

\begin{remark}
Although satisfying the requirement~\eqref{eq:order_req} ensures that the GCL~\eqref{eq:GCL} is held numerically (or the discrete GCL holds), it cannot guarantee that Eq.~\eqref{eq:STFR_new_ref} is numerically equivalent to Eq.~\eqref{eq:STFR_new_phy}. We emphasize that only when the conditions in \textbf{Theorem}~\ref{pro:moving} are satisfied, Eq.~\eqref{eq:STFR_new_ref} is numerically equivalent to Eq.~\eqref{eq:STFR_new_phy}. 
\end{remark}

Recall the discussions in \textbf{Sect.}~\ref{subsubsec:RefD}, explicitly using $\widetilde{\vect{Q}}$ as the working variable may cause accuracy deterioration of the solution $\vect{Q}$. Therefore, it is desirable to implement Eq.~\eqref{eq:STFR_new_phy} with respect to the reference domain (see \textbf{Key Procedures}~\ref{alg:2DFR_key_new} in \textbf{Sect.}~\ref{subsubsec:Stat_CE}). Since it is very challenging, if not impossible, to exactly measure how the geometric nonlinearity affects the solution accuracy, we develop projection-based polynomial filtering procedures to control the aliasing errors caused by the nonlinear interaction between physical quantities of the flow and the curvilinear geometric representation of space-time elements. 

Assume that any solution variable $Q^H (\vect{\xi}) = Q^H (\xi, \eta, \tau)$ from the solution vector $\vect{Q}$ is approximated by the polynomial space $\mathbb{Q}^{k^H} (\xi, \eta) \otimes P^{m^H} (\tau)$, and the basis functions are the multidimensional Lagrange polynomials $L^H_{i_1,i_2,i_3} (\vect{\xi})= \phi^H_{i_1}(\xi) \phi^H_{i_2}(\eta) \phi^H_{i_3}(\tau)$, $i_1, i_2 = 1, \ldots, k^H+1$ and $i_3 = 1, \ldots, m^H+1$, consisting of 1D Lagrange polynomials $\phi^H_i (*)$, where `$*$' stands for $\xi \in [-1,1]$, $\eta \in [-1,1]$, or $\tau \in [-1,1]$. $\phi^H_i (*)$'s are constructed based on the $k^H + 1$ Gauss--Legendre quadrature points (i.e. solution points in FR), denoted as $\xi^H_j$ or $\eta^H_j$, in each spatial dimension, or on the $m^H + 1$ Gauss--Legendre quadrature points, denoted as $\tau^H_j$, in the time dimension. The corresponding quadrature weights associated with spatial quadrature points are denoted as $\omega^H_{s,j}$, and those with temporal quadrature points as $\omega^H_{t,j}$. 
The projection space sits in $\mathbb{Q}^{k^L} (\xi, \eta) \otimes P^{m^L} (\tau)$, and all quantities associated with this space are marked with a superscript `$L$', such as the projected solution $Q^L (\vect{\xi}) = Q^L (\xi, \eta, \tau)$ and its basis function $L^L_{i_1,i_2,i_3} (\vect{\xi})= \phi^L_{i_1}(\xi) \phi^L_{i_2}(\eta) \phi^L_{i_3}(\tau)$, $i_1, i_2 = 1, \ldots, k^L+1$ and $i_3 = 1, \ldots, m^L+1$. We can then express $Q^H (\vect{\xi})$ and $Q^L (\vect{\xi})$ as
\begin{equation}
\begin{aligned}
Q^H(\vect{\xi}) & = 
\sum_{i_1=1}^{k^H+1}  \sum_{i_2=1}^{k^H+1} \sum_{i_3=1}^{m^H+1} 
Q^H(\vect{\xi}^H_{i_1,i_2,i_3})
L^H_{i_1,i_2,i_3} (\vect{\xi}) \\
& =
\sum_{i_1=1}^{k^H+1}  \sum_{i_2=1}^{k^H+1} \sum_{i_3=1}^{m^H+1} Q^H_{i_1,i_2,i_3}
\phi^H_{i_1}(\xi) \phi^H_{i_2}(\eta) \phi^H_{i_3}(\tau)
\\
Q^L(\vect{\xi}) & =  
\sum_{i_1=1}^{k^L+1}  \sum_{i_2=1}^{k^L+1} \sum_{i_3=1}^{m^L+1} Q^L(\vect{\xi}^L_{i_1,i_2,i_3})
L^L_{i_1,i_2,i_3} (\vect{\xi}) \\
& =
\sum_{i_1=1}^{k^L+1}  \sum_{i_2=1}^{k^L+1} \sum_{i_3=1}^{m^L+1} Q^L_{i_1,i_2,i_3}
\phi^L_{i_1}(\xi) \phi^L_{i_2}(\eta) \phi^L_{i_3}(\tau)
\end{aligned},
\label{eq:Sol_Lagrange}
\end{equation}
and the projection operation can be written as
\begin{equation}
\begin{aligned}
&   \iiint \sum_{i_1=1}^{k^L+1}  
    \sum_{i_2=1}^{k^L+1} \sum_{i_3=1}^{m^L+1} Q^L_{i_1,i_2,i_3} L^L_{i_1,i_2,i_3} (\vect{\xi}) 
    L^L_{j_1,j_2,j_3} (\vect{\xi}) d\vect{\xi} \\
= & \iiint \sum_{i_1=1}^{k^H+1}  
    \sum_{i_2=1}^{k^H+1} \sum_{i_3=1}^{m^H+1} Q^H_{i_1,i_2,i_3} L^H_{i_1,i_2,i_3} (\vect{\xi}) 
    L^L_{j_1,j_2,j_3} (\vect{\xi}) d\vect{\xi}
\end{aligned},
\label{eq:Proj}
\end{equation}
for any test function $L^L_{j_1,j_2,j_3} (\vect{\xi})$, $j_1, j_2 = 1, \ldots, k^L+1$ and $j_3 = 1, \ldots, m^L+1$. Note that $Q^H_{i_1,i_2,i_3}$'s are known on solution points, and $Q^L_{j_1,j_2,j_3}$'s are calculated from Eq.~\eqref{eq:Proj} as
\begin{equation} \label{eq:Proj_Coef}
   Q^L_{j_1,j_2,j_3} = \frac{1}{\omega_{s,j_1}^L \omega_{s,j_2}^L \omega_{s,j_3}^L}
   \sum_{i_1=1}^{k^H+1}  
    \sum_{i_2=1}^{k^H+1} \sum_{i_3=1}^{m^H+1} Q^H_{i_1,i_2,i_3}
    \phi^L_{j_1}(\xi_{i_1}^H) \phi^L_{j_2}(\eta_{i_2}^H) \phi^L_{j_3}(\tau_{i_3}^H)
    \omega_{s,i_1}^H \omega_{s,i_2}^H \omega_{s,i_3}^H.
\end{equation}
A proof of Eq.~\eqref{eq:Proj_Coef} is given in~\ref{app:proj}. 

Then with $Q^L(\vect{\xi})$ defined in Eq.~\eqref{eq:Sol_Lagrange}, we can calculate $Q^L(\vect{\xi}_{j_1,j_2,j_3}^H)$, $j_1, j_2 = 1, \ldots, k^H+1$ and $j_3 = 1, \ldots, m^H+1$.
Since $k^L < k^H$ and $m^L < m^H$, $Q^L(\vect{\xi})$ can also be defined as
\begin{equation}
Q^L(\vect{\xi})  = 
\sum_{i_1=1}^{k^H+1}  \sum_{i_2=1}^{k^H+1} \sum_{i_3=1}^{m^H+1} 
Q^L(\vect{\xi}^H_{i_1,i_2,i_3})
\phi^H_{i_1}(\xi) \phi^H_{i_2}(\eta) \phi^H_{i_3}(\tau).
\label{eq:L_Lagrange}
\end{equation}
A proof of the equivalency between $Q^L(\vect{\xi})$ defined in Eq.~\eqref{eq:Sol_Lagrange} and that in Eq.~\eqref{eq:L_Lagrange} is given in~\ref{app:interp}.

On defining a parameter $\theta \in [0,1]$, and $\Delta f_{i_1,i_2,i_3}^H = Q^H(\vect{\xi}^H_{i_1,i_2,i_3}) - Q^L(\vect{\xi}_{i_1,i_2,i_3}^H)$, we can reconstruct $\bar{Q}^H (\vect{\xi})$, the filtered $Q^H (\vect{\xi})$, as
\begin{equation}
    \bar{Q}^H (\vect{\xi}) = 
    \sum_{i_1=1}^{k^H+1}  \sum_{i_2=1}^{k^H+1} \sum_{i_3=1}^{m^H+1} 
\left(Q^L(\vect{\xi}^H_{i_1,i_2,i_3}) +
      \theta \Delta f_{i_1,i_2,i_3}^H
\right)
\phi^H_{i_1}(\xi) \phi^H_{i_2}(\eta) \phi^H_{i_3}(\tau).
\label{eq:Fil_Lagrange}
\end{equation}
It is clear that when $\theta = 1$,  $\bar{Q}^H (\vect{\xi}) =  Q^H (\vect{\xi})$; and when $\theta = 0$, $\bar{Q}^H (\vect{\xi}) =  Q^L (\vect{\xi})$. On defining $\Delta f(\vect{\xi}) = Q^H (\vect{\xi}) - Q^L (\vect{\xi})$, and its energy $E_g = \iiint \Delta f^2(\vect{\xi}) d \vect{\xi}$, we have the following conclusion:
\begin{itemize}
    \item For any $\theta \in [0,1]$, a $(1-\theta^2)$ portion of $E_g$ is exactly filtered from $Q^H (\vect{\xi})$ in the numerical reconstruction of the solution by Eq.~\eqref{eq:Fil_Lagrange}. 
\end{itemize} 
This conclusion is also proved in ~\ref{app:interp}.

To summarize, key procedures of projection-based polynomial filtering for aliasing error control of curvilinear space-time moving elements after each physical time step when implementing Eq.~\eqref{eq:STFR_new_phy} are given below:

\begin{algorithm}[!htbp]
\caption{Projection-based Filtering for Aliasing Error Control}
\label{alg:filter_proj}
\begin{algorithmic} [1]
\State  
Perform the solution projection from the polynomial space $\mathbb{Q}^{k^H} (\xi, \eta) \otimes P^{m^H} (\tau)$ to $\mathbb{Q}^{k^L} (\xi, \eta) \otimes P^{m^L} (\tau)$ following Eq.~\eqref{eq:Proj} and~\eqref{eq:Proj_Coef}.

\State
Reconstruct the filtered solution polynomial using Eq.~\eqref{eq:Fil_Lagrange}.

\end{algorithmic}
\end{algorithm}

\section{Numerical Experiments}
\label{sec:results} 

In this section, we use numerical experiments to study the spatial and temporal accuracy of the nodal tensor-product STFR method on curvilinear moving grids. Both 1D and 2D hyperbolic conservation laws are used to conduct the numerical tests. For example, when the 1D linear wave propagation (advection) problem is studied, $Q$ becomes a scalar, and the flux $F = cQ$, where $c$ is a constant. When the 2D nonlinear Euler vortex propagation problem is studied, we have
$\vect{Q} = \left( \rho, \rho u, \rho v, E \right)$, $\vect{F} = \left( \rho u, \rho u^2 + p, \rho u v, (E+p)u \right)$, and $\vect{G} = \left( \rho v, \rho u v, \rho v^2 + p, (E+p)v \right)$,
where $\rho$ is the fluid density, $u$ and $v$ are the velocities in the $x$- and $y$-directions, $p$ is the pressure, and $E$ is the total energy per unit volume. The Euler system is closed by the perfect gas law. Periodic boundary conditions are enforced in space in all tests presented in this section.

To measure the accuracy, the $\ell_2$ error of any quantity $q$ within the spatial domain $\Omega$ at a specific time, e.g. the time $t_n$ of the simulation, is used. It is defined as
\begin{equation} \label{eq:L2Norm}
  \norm{\Delta q_n}_{\ell_2; \Omega} = 
  \left(
  \frac{\int_{\Omega} \left(
  q_{h,n}^I - q_{a,n}
  \right)^2 dV}{V}
  \right)^{\frac{1}{2}}.
\end{equation}
Herein, $q_h$ is the space-time approximation of the quantity $q$, $q_a$ is the analytical (exact) value of $q$,  and the superscript `$I$’ and subscript `$n$' indicate that the value is an interpolated one at the time $t_n$. Specifically, $q_{h,n}^I$ is calculated from the space-time slab $\Omega \times [t_n-\Delta t,t_n]$ as 
\begin{equation} \label{eq:TLagInterp}
  q_{h,n}^I = 
 \sum_{i=1}^N \phi_i(1) q_{h,i}, \quad
 \text{with} \ \phi_i(\tau) = \prod_{j=1, j\neq i}^N \frac{\tau - \tau_j}{\tau_i - \tau_j},
\end{equation}
where $\tau_i \in [-1,1]$, $i=1, \ldots ,N$, are the temporal solution points in the standard space-time element.

%
%
%
%

Note that when conducting spatial accuracy measurement (i.e. convergence tests via spatial grid refinement), very high-order accurate temporal discretization, such as the $14^{\text{th}}$-order temporal scheme in this study, is used to ensure that the spatial errors dominate in numerical experiments. Similarly, when conducting temporal accuracy measurement (i.e. convergence tests via time step refinement), very high-order accurate spatial discretization, such as the $14^{\text{th}}$-order spatial scheme in this study, is used to ensure that the temporal errors dominate.  

We first summarize several key features observed from numerical tests of the STFR method on stationary grids~\cite{Yu_Space_Time_2017}. This sets up the cornerstone when explaining numerical experiment results from STFR simulation on curvilinear moving/deformable grids.

\subsection{Stationary domain results summary}
\label{subsec:Stationary}
This section presents some key convergence test results for simulations of the 1D linear wave propagation and 2D Euler vortex propagation on stationary grids.
We mention that on stationary grids tessellated with regular uniform quadrilateral elements, where the Jacobian and metrics are constant, all STFR schemes with Gauss--Legendre solution points satisfy \textbf{Theorem}~\ref{pro:sta}.  In Figure~\ref{fig:1DLinear_Sta}, the spatial convergence rates using $P^1$ to $P^5$ spatial constructions, and the temporal convergence rates using $P^1$ to $P^4$ temporal constructions are presented. We observe that the spatial convergence rate reaches its optimal value, i.e. the  ($k+1$)th order is achieved for a degree $k$ polynomial construction; see Figure~\ref{subfig:Sta_space_conv}. 
As shown in Figure~\ref{subfig:Sta_time_conv}, superconvergence with the ($2k+1$)th order for a degree $k$ temporal polynomial construction shows up in temporal convergence tests. 
We mention again that a space-time tensor product operation is used to construct the FR formulation, and the Gauss--Legendre quadrature points are used as solution points both in space and time (see Figure~\ref{fig:ST_SF}). 
As has been proved by Huynh~\cite{Huynh_JSC_2023}, in the temporal direction, the FR scheme with the Gauss--Legendre solution points is equivalent to the so-called IRK DG-Gauss scheme when the quadrature rule based on the solution points (i.e. quadrature points used in DG) is sufficiently accurate to integrate the space-time curvilinear elements.
Since the space-time discretization in the time dimension with a degree $k$ polynomial construction is equivalent to the IRK DG-Gauss scheme with ($k+1$) stages, the order of accuracy of which is ($2k+1$), the temporal superconvergence rate of the STFR scheme agrees with its theoretical value; see Section 4.4 in~\cite{Huynh_JSC_2023}.

\begin{figure}[!htbp]
  \centering
  \subfloat[Spatial Convergence]{\includegraphics[width=0.45\textwidth]{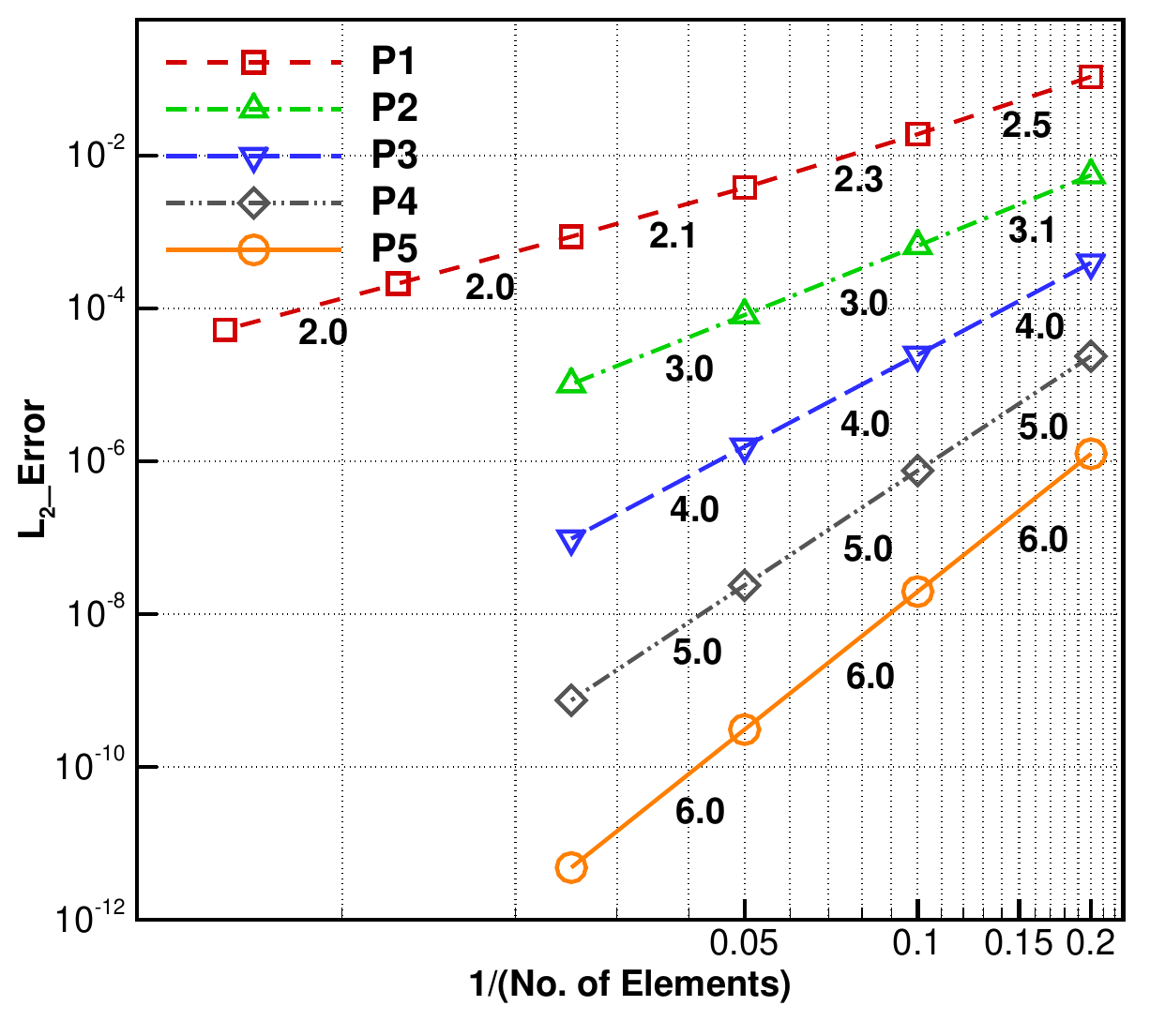}
  \label{subfig:Sta_space_conv}
  }
\hspace{0.2em}
  \subfloat[Temporal Convergence]{\includegraphics[width=0.45\textwidth]{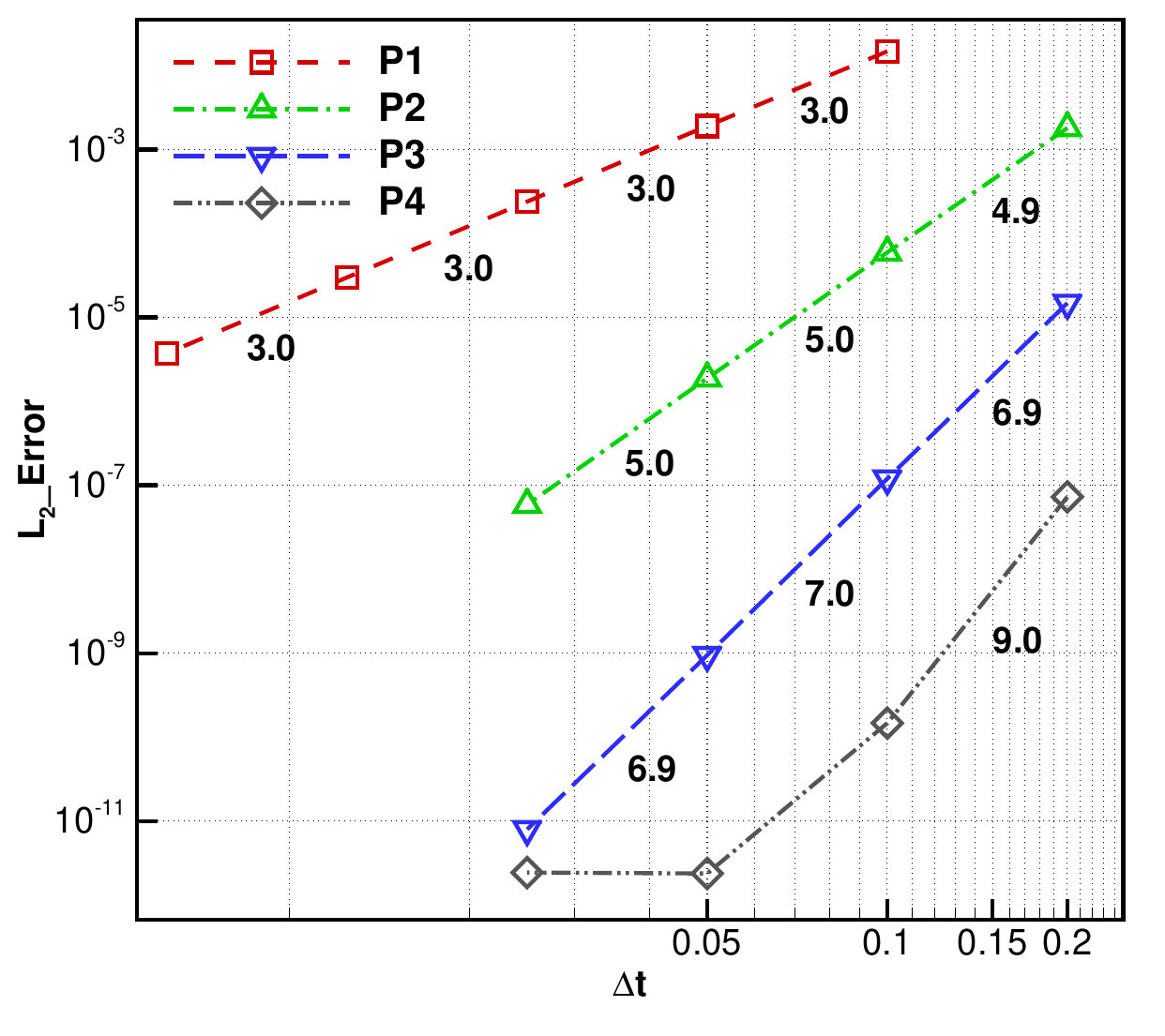}
  \label{subfig:Sta_time_conv}
  }
\caption{(a) Spatial convergence rates from $P^1$ to $P^5$ spatial constructions; and (b) temporal convergence rates from $P^1$ to $P^4$ temporal constructions for the 1D wave propagation problem on a stationary grid at $t=1$ (i.e. one wave propagation period).}
  \label{fig:1DLinear_Sta}
\end{figure}

The spatial and temporal convergence rates for simulations of the Euler vortex propagation (see the inviscid vortex setup in~\cite{Yu_Space_Time_2017}) were also studied with $\mathbb{Q}^2$ to $\mathbb{Q}^5$ spatial constructions and $P^1$ to $P^3$ temporal constructions. Results are presented in Figure~\ref{fig:2DEuler_Sta}. Similar to the 1D linear wave propagation tests, we observe that the spatial convergence rates match their optimal values, i.e. the  ($k+1$)th order for a degree $k$ polynomial construction, and the temporal convergence rates show the theoretical superconvergent features, i.e. the  ($2k+1$)th order for a degree $k$ polynomial construction. This demonstrates that spatial nonlinearity from the governing PDEs does not affect the temporal superconvergence of the STFR schemes. Therefore, we will use 2D linear wave propagation to test the curvilinear space-time effects of moving domains on the convergence of STFR schemes in following sections.

\begin{figure}[!htbp]
  \centering
  \subfloat[Spatial Convergence]{\includegraphics[width=0.45\textwidth]{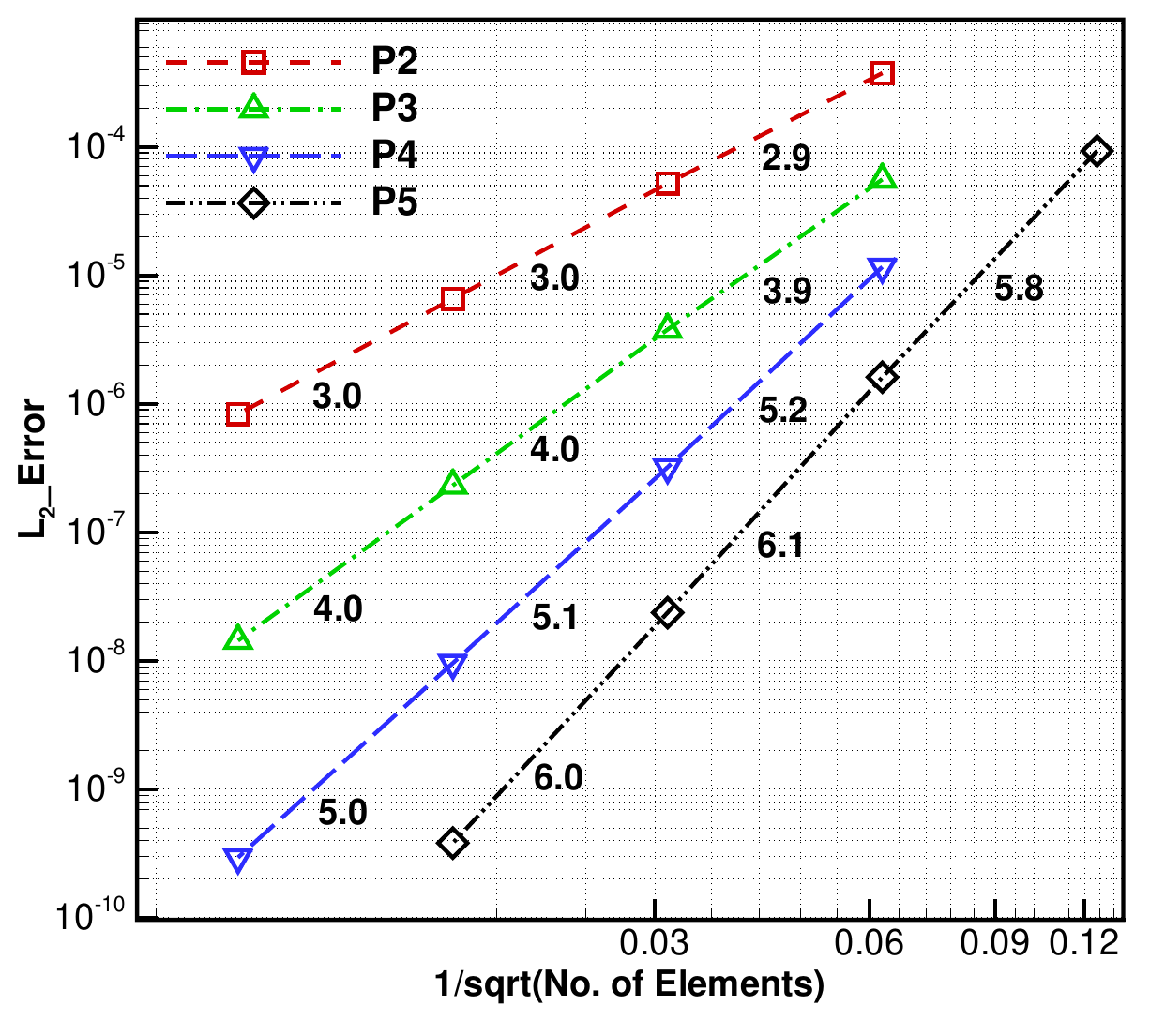}}\hspace{0.2em}
\hspace{0.2em}
  \subfloat[Temporal Convergence]{\includegraphics[width=0.45\textwidth]{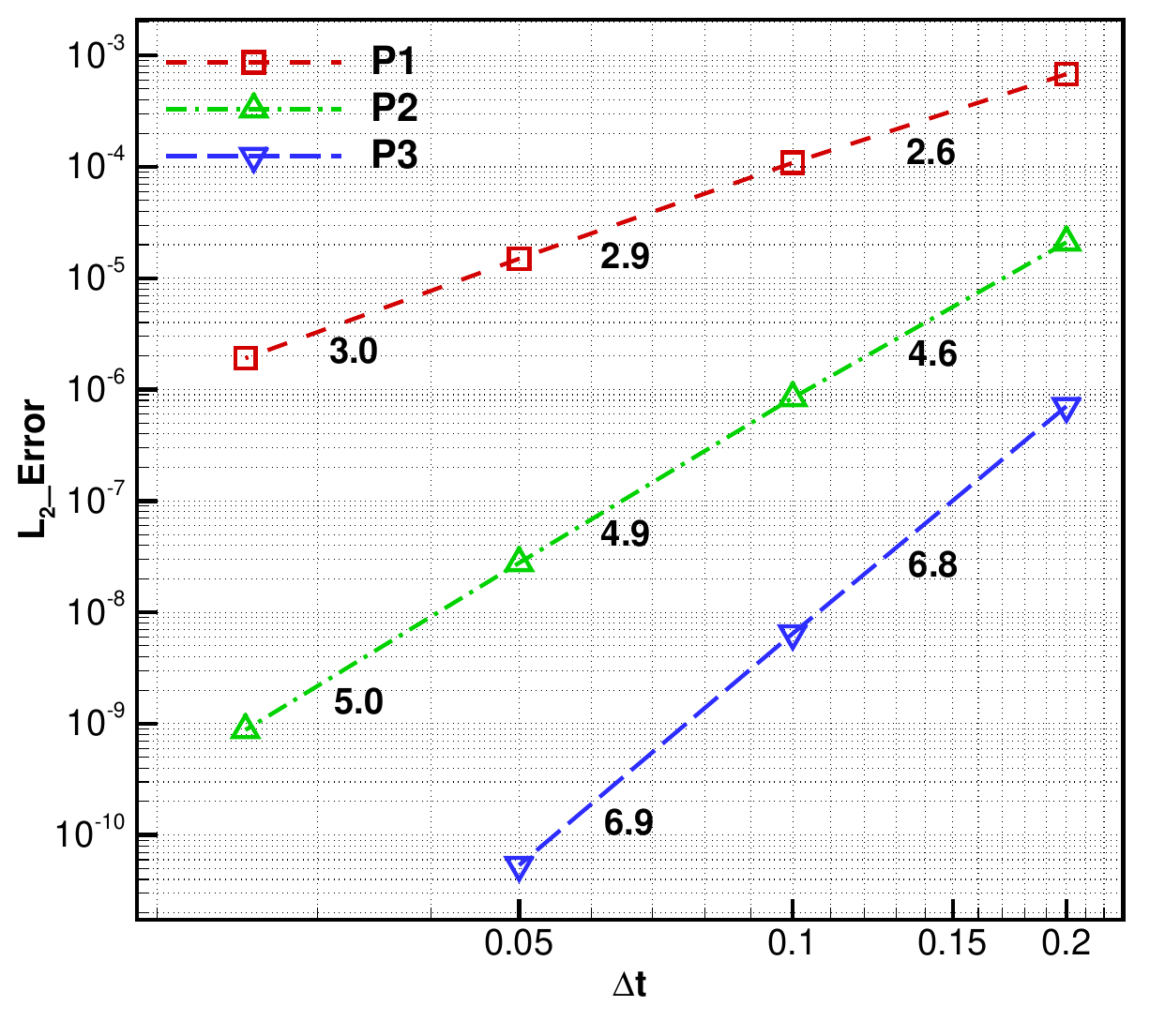}}
    \caption{(a) Spatial convergence rates from $\mathbb{Q}^2$ to $\mathbb{Q}^5$ spatial constructions for the 2D Euler vortex propagation problem on a stationary grid at the 1/8 wave propagation period; and (b) temporal convergence rates from the $P^1$ to $P^3$ temporal constructions evaluated at one wave propagation period.}
  \label{fig:2DEuler_Sta}
\end{figure}

\begin{Observation}
With Gauss--Legendre quadrature points serving as spatiotemporal solution points, the nodal STFR formulation shows a ($2k+1$)th order superconvergence rate for a degree $k$ polynomial construction (i.e. with $k+1$ solution points) in the time dimension on stationary grids, no matter for linear or nonlinear hyperbolic conservation laws with adequate smooth features.
\end{Observation}

\subsection{Tensor-product space-time elements with linear spatial and temporal geometric representations in a moving domain}
\label{subsec:MovingLinear}
Starting from this section, numerical schemes will be termed with three labels, i.e. `\textbf{S}' representing those satisfying \textbf{Theorem}~\ref{pro:moving}, `\textbf{P}' representing those satisfying the \textbf{Proposition}~\ref{pro:moving_reduced} but violating \textbf{Theorem}~\ref{pro:moving}, and `\textbf{V}' representing those violating the \textbf{Proposition}~\ref{pro:moving_reduced} (and \textbf{Theorem}~\ref{pro:moving}). 
We mention that when Eq.~\eqref{eq:STFR_new_phy} is implemented with \textbf{Key Procedures}~\ref{alg:2DFR_key_new}, a scheme labeled `\textbf{V}' that violates \textbf{Proposition}~\ref{pro:moving_reduced} can still preserve the freestream. 

Now we examine which accuracy label, i.e. `\textbf{S}', `\textbf{P}', or `\textbf{V}', the numerical schemes with different nominal spatial approximation degree $k$ and temporal approximation degree $m$ should have when linear spatial and temporal representations are used to describe the grid element. 
According to the polynomial space used in Table~\ref{Tab:ST_Variables}, we have $l=1$ and $n=1$ for linear space-time elements. Thus, for $k=2$, $3$, and $4$ (i.e. nominal $3^{rd}$-, $4^{th}$-, and $5^{th}$-order spatial constructions), and $m=1$ and $2$ (i.e. nominal $2^{nd}$- and $3^{rd}$-order temporal constructions with the corresponding $3^{rd}$- and $5^{th}$-order superconvergence, respectively), the accuracy labels of the STFR schemes are summarized in Table~\ref{Tab:Bilinear}. 
Since $\widetilde{\vect{Q}}$ is used as the hidden working variable in the STFR schemes for general curvilinear grids, different numbers of solution points can be used to construct the physical quantity $\vect{Q}$, depending on the desired accuracy of the hidden working variable $\widetilde{\vect{Q}}$. Note that as a rule of thumb, the order of accuracy of a numerical scheme is measured by the number of solution points used to construct the explicit working variable, i.e. the solution $\vect{Q}$ here. However, depending on the nominal order of accuracy, which depends on the expected spatial approximation degree $k$, we can label the scheme with `\textbf{S}', `\textbf{P}', or `\textbf{V}'. Take ``\# of SPs" equal to 4 in Table~\ref{SubTab:Linear_Space} as an example. If \textbf{Theorem}~\ref{pro:moving} is strictly satisfied (i.e. with label `\textbf{S}'), then the nominal order of accuracy for the solution $\vect{Q}$ should be 3; if only \textbf{Proposition}~\ref{pro:moving_reduced} needs to be satisfied, then the nominal order of accuracy for the solution $\vect{Q}$ can be up to 4. In practical numerical tests, the convergence rate can be between 3 and 4 when \textbf{Key Procedures}~\ref{alg:2DFR_key_new} is used to implement STFR. The filtering operation in \textbf{Key Procedures}~\ref{alg:filter_proj} can then be used to stabilize the convergence rate to 3; see numerical experimental results presented in \textbf{Sect.}~\ref{subsubsec:SD_deform}.           
\begin{table}[!htb]
\caption{Scheme labels based on the nominal order of accuracy when linear space-time elements, i.e. $l=1$ and $n=1$, are used to represent the moving/deformable grids. Herein, ``OoA" stands for ``Order of Accuracy", ``SPs" stands for ``Solution Points", and ``\#" indicates ``number".}
\label{Tab:Bilinear}
\centering
\subfloat[Spatial Discretizations] {
\label{SubTab:Linear_Space}
\begin{tabular}{|cccc|}
\hline
\multicolumn{4}{|c|}{Desired OoA = \# of SPs}                                                                                                                           \\ \hline
\multicolumn{1}{|c|}{$k$}                & \multicolumn{1}{c|}{Nominal OoA}                                                              & \multicolumn{1}{c|}{\# of SPs} & Label \\ \hline
\multicolumn{1}{|c|}{\multirow{2}{*}{2}} & \multicolumn{1}{c|}{\multirow{2}{*}{3}}                                                       & \multicolumn{1}{c|}{3}         & `P'   \\ \cline{3-4} 
\multicolumn{1}{|c|}{}                   & \multicolumn{1}{c|}{}                                                                         & \multicolumn{1}{c|}{4}         & `S'   \\ \hline
\multicolumn{1}{|c|}{\multirow{2}{*}{3}} & \multicolumn{1}{c|}{\multirow{2}{*}{4}}                                                       & \multicolumn{1}{c|}{4}         & `P'   \\ \cline{3-4} 
\multicolumn{1}{|c|}{}                   & \multicolumn{1}{c|}{}                                                                         & \multicolumn{1}{c|}{5}         & `S'   \\ \hline
\multicolumn{1}{|c|}{\multirow{2}{*}{4}} & \multicolumn{1}{c|}{\multirow{2}{*}{5}}                                                       & \multicolumn{1}{c|}{5}         & `P'   \\ \cline{3-4} 
\multicolumn{1}{|c|}{}                   & \multicolumn{1}{c|}{}                                                                         & \multicolumn{1}{c|}{6}         & `S'   \\ \hline
\end{tabular}
}
\quad
\subfloat[Temporal Discretizations] {
\label{SubTab:Linear_Time}
\begin{tabular}{|cccc|}
\hline
\multicolumn{4}{|c|}{\begin{tabular}[c]{@{}c@{}}Desired OoA with superconvergence \\ = 2 $\times$ (\# of SPs) $-$ 1\end{tabular}}                                                                                                                    \\ \hline
\multicolumn{1}{|c|}{$m$}                & \multicolumn{1}{c|}{\begin{tabular}[c]{@{}c@{}}Nominal \\ superconvergence\end{tabular}} & \multicolumn{1}{c|}{\# of SPs} & Label \\ \hline
\multicolumn{1}{|c|}{\multirow{3}{*}{1}} & \multicolumn{1}{c|}{\multirow{3}{*}{3}}                                                       & \multicolumn{1}{c|}{2}         & `V'   \\ \cline{3-4} 
\multicolumn{1}{|c|}{}                   & \multicolumn{1}{c|}{}                                                                         & \multicolumn{1}{c|}{3}         & `P'   \\ \cline{3-4} 
\multicolumn{1}{|c|}{}                   & \multicolumn{1}{c|}{}                                                                         & \multicolumn{1}{c|}{4}         & `S'   \\ \hline
\multicolumn{1}{|c|}{\multirow{3}{*}{2}} & \multicolumn{1}{c|}{\multirow{3}{*}{5}}                                                       & \multicolumn{1}{c|}{3}         & `P'   \\ \cline{3-4} 
\multicolumn{1}{|c|}{}                   & \multicolumn{1}{c|}{}                                                                         & \multicolumn{1}{c|}{4}         & `P'   \\ \cline{3-4} 
\multicolumn{1}{|c|}{}                   & \multicolumn{1}{c|}{}                                                                         & \multicolumn{1}{c|}{5}         & `S'   \\ \hline
\end{tabular}
}
\end{table}

This section shows the convergence test results for simulations of the 2D linear wave propagation on moving/deformable grids with the linear space-time element representation. 
Two types of problems are considered: 
(1) deformable grids with spatial and temporal symmetry; and (2) moving and deformable grids without spatial and temporal symmetry. Starting from this section, we use the number of solution points (SPs) employed in STFR schemes for solution construction as the measure of desired order of accuracy. Note that the desired order of accuracy is always equal to or greater than the nominal one.  

%
%

\subsubsection{Deformable grids with spatial and temporal symmetry}
\label{subsubsec:SD_deform}
In this subsection, numerical tests are performed with linear spatial and temporal representations of deformable grids with spatial and temporal symmetry. The grid deformation strategy is given as follows~\cite{Mavriplis_Nastase_AIAA_2008}:
\begin{equation}
\Biggl\{
\begin{array}{l}
dx(\vect{x},t)=A_x L_x  \text{sin}(\omega_t t) \text{sin}(\omega_x x) \text{sin}(\omega_y y) dt⁄t_{max},\\
dy(\vect{x},t)=A_y L_y  \text{sin}(\omega_t t) \text{sin}(\omega_x x) \text{sin}(\omega_y y) dt⁄t_{max}.
\end{array}
\label{eq:mesh_motion_sym_def}
\end{equation}
Herein, $dx$ and $dy$ are the grid displacement at the time $t$ in the $x$- and $y$-directions, $dt$ is the temporal step size, $L_x$, $L_y$ and $t_{max}$ are reference values in the $x$-, $y$- and $t$-directions, and $A_x$ and $A_y$ are scaling factors to control the amplitudes of grid displacement in the $x$- and $y$-directions. The temporal and spatial angular frequencies are defined as
\[
\omega_t=(n_t \pi)⁄t_{max}, \ \omega_x=(n_x \pi)⁄L_x, \ \omega_y=(n_y \pi)⁄L_y.
\]
In this study, the parameters are set as $A_x=A_y=0.1$, $L_x=L_y=1.0$, $n_t=0.5$, $n_x=n_y=4.0$, and $t_{max}=0.2$. The computational domain is set as $[0,1] \times [0,1]$, and all simulations are run until $t=t_{max}=T/4$. The wave field on top of the corresponding deformed grid tessellated with $64 \times 64$ elements for the 2D linear wave propagation problem at $t=t_{max}$ is displayed in Figure~\ref{subfig:Sin_deform_flow} for visualization. 
Since bilinear polynomials are used to represent grid elements, as shown in Figure~\ref{subfig:Sin_deform_mesh_comp}, where a grid with $16 \times 16$ elements is plotted on top of that with $8 \times 8$ elements, the two sets of grids used in the spatial convergence tests do not match each other. This issue also exists in the time dimension, and can affect the spatiotemporal convergence rates as will be shown shortly.

\begin{figure}[!htbp]
  \centering
  \subfloat[]{\includegraphics[width=0.45\textwidth]{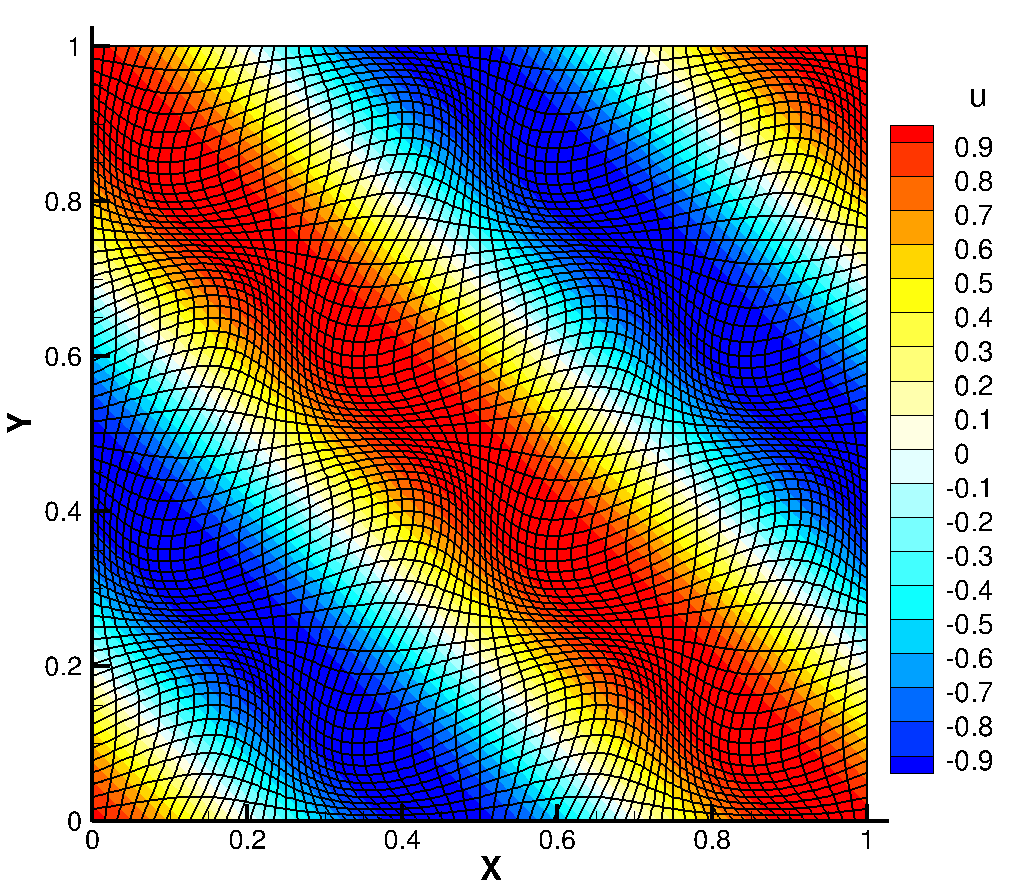}
  \label{subfig:Sin_deform_flow}
  }
\hspace{0.2em}
  \subfloat[]{\includegraphics[width=0.38\textwidth]{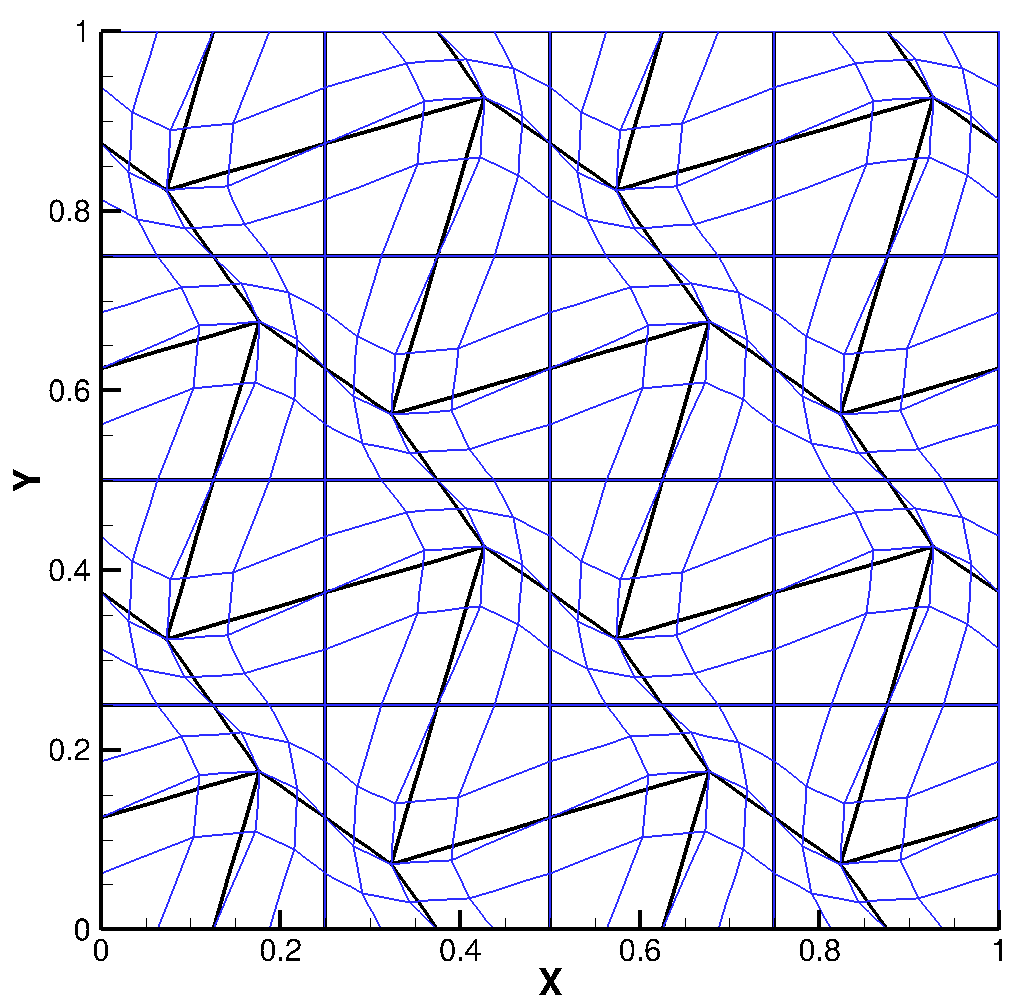}
  \label{subfig:Sin_deform_mesh_comp}
  }
    \caption{(a) The wave field and the corresponding deformed grid at $t=t_{max}=0.2$ for the 2D linear wave equation. (b) The grid with $16 \times 16$ bilinear elements plotted on top of that with $8 \times 8$ bilinear elements.}
  \label{fig:Sin_deform}
\end{figure}

We first present the spatial convergence rates for spatial constructions using 3 to 6 solution points in each spatial dimension in Figure~\ref{fig:DS_bilinear_space}. We observe that the order of accuracy shows severe deterioration at the coarser grid side, e.g. grids with $8 \times 8$ and $16 \times 16$ elements, and gradually recovers towards the optimal value (i.e. the number of solution points in each spatial dimension) when the grid becomes finer. The convergence rate deterioration over coarser grids is expected to be fixed when grid representation errors are reduced when curvilinear elements are adopted. This will be further discussed in \textbf{Sect.}~\ref{subsec:MovingCurved}.   

\begin{figure}[!htbp]
  \centering
  \subfloat[Spatial Convergence]{\includegraphics[width=0.45\textwidth]{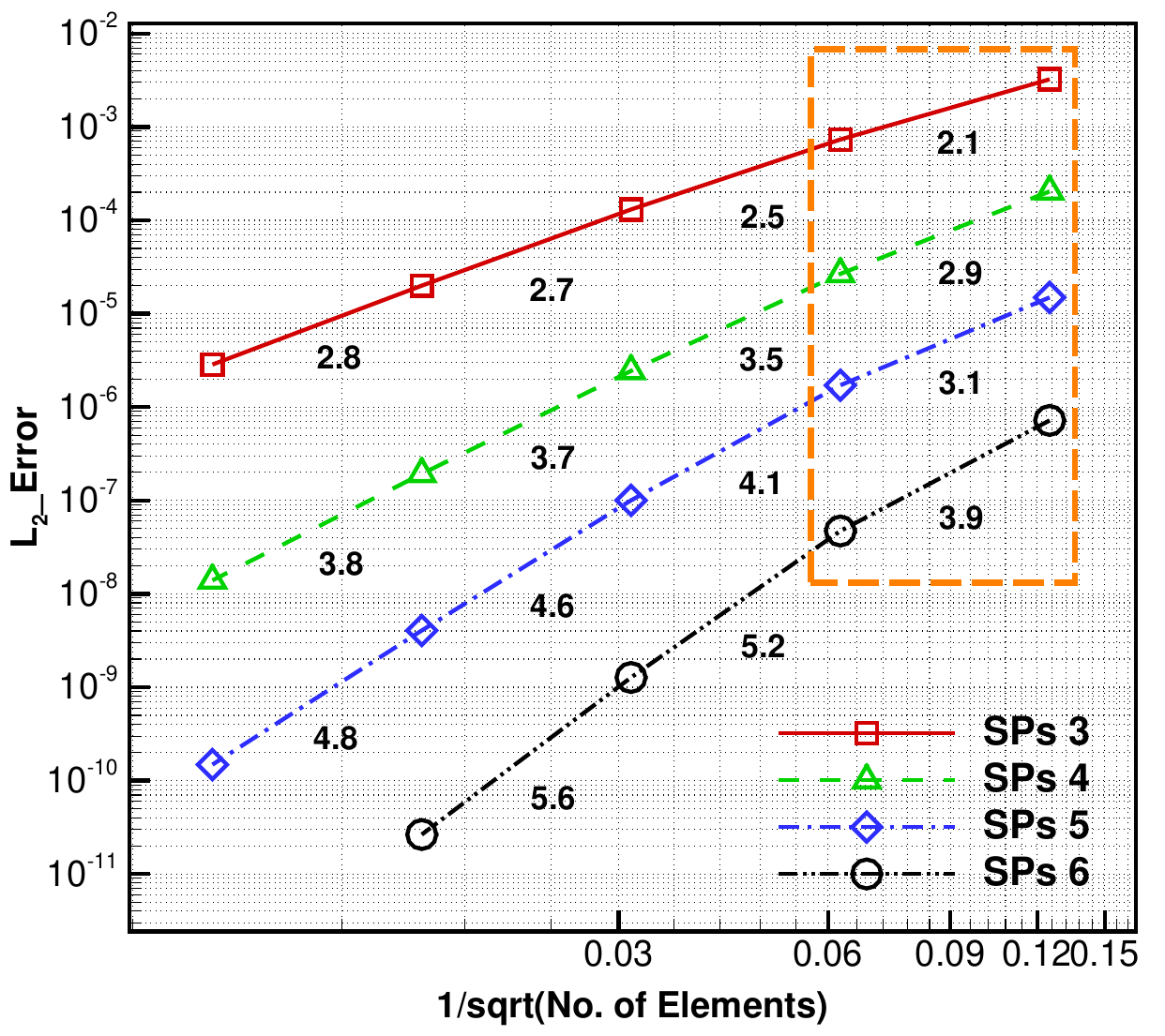}
  \label{fig:DS_bilinear_space}}
\hspace{0.2em}
  \subfloat[Temporal Convergence]{\includegraphics[width=0.45\textwidth]{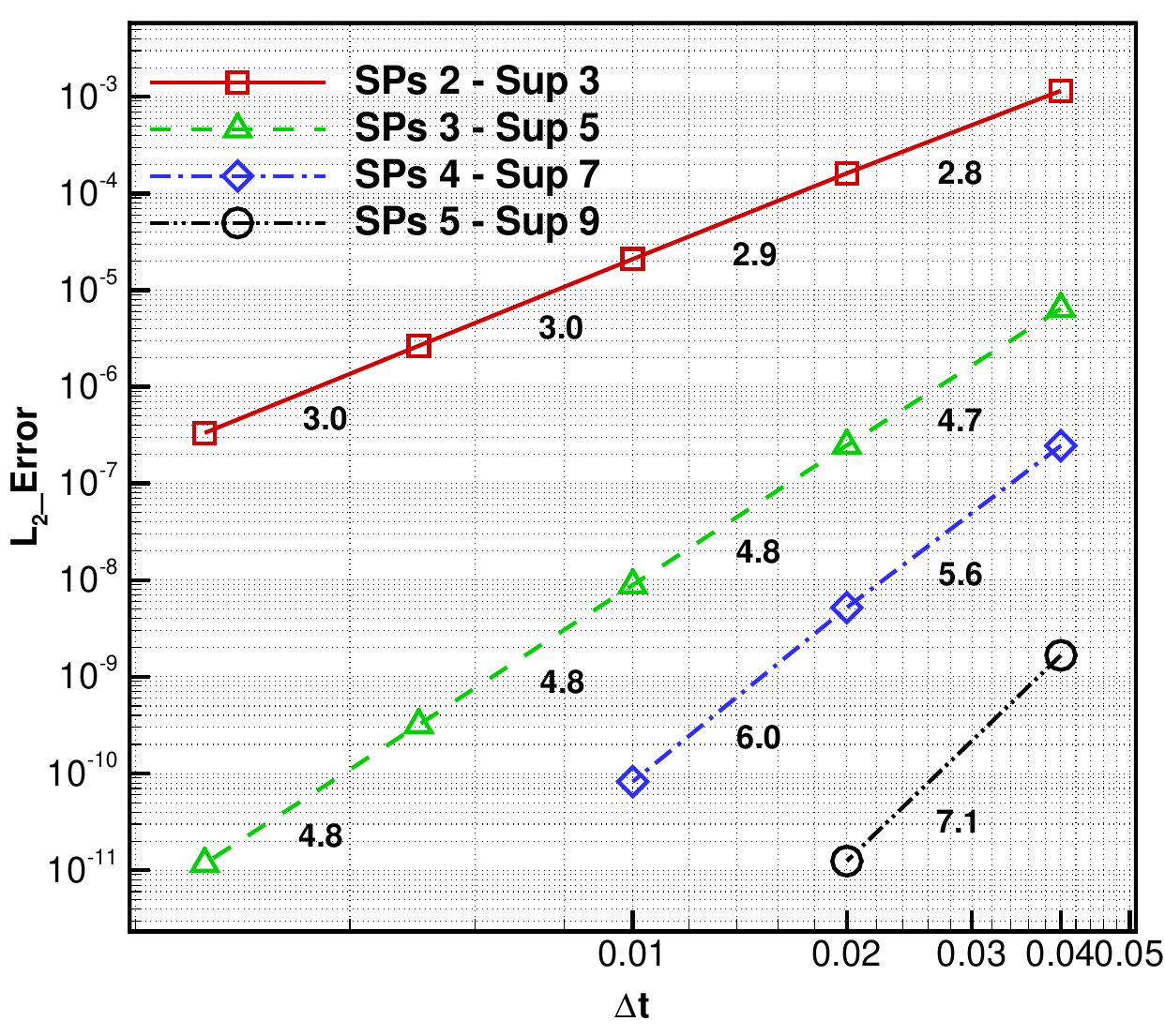}
  \label{fig:DS_bilinear_time}}
    \caption{(a) Spatial convergence rates for spatial constructions using 3 to 6 solution points in each spatial dimension; and (b) temporal convergence rates for temporal constructions using 2 to 5 solution points in the time dimension for the 2D linear wave equation on a deforming square domain tessellated with linear space-time elements.}
  \label{fig:DS_bilinear}
\end{figure}

The temporal convergence rates are shown in Figure~\ref{fig:DS_bilinear_time}. We observe that the schemes with 2 and 3 solution points in the time dimension show the same superconvergence rate as that on stationary grids; specifically, with $j$ temporal solution points, a $(2j-1)$th order superconvergence shows up. Note that based on Table~\ref{Tab:Bilinear}, the scheme with 2 temporal solution points violates the \textbf{Proposition}~\ref{pro:moving_reduced} as well as \textbf{Theorem}~\ref{pro:moving}. However, this does not cause convergence rate deterioration. As will be reported in following sections, similar phenomena have been observed in cases with stronger nonlinearity. This contributes to the good numerical property of the STFR schemes when Eq.~\eqref{eq:STFR_new_phy} is implemented with \textbf{Key Procedures}~\ref{alg:2DFR_key_new}; see similar observations reported in~\cite{YU201470}.  
For schemes with 4 and 5 temporal solution points, the superconvergence rates do not reach the desired optimal values. Due to the very small absolute errors (i.e. smaller than $10^{-9}$) possessed by this deforming grid problem, it is hard to probe the exact numerical issues. But as will be seen from the deforming grid cases without temporal symmetry presented in \textbf{Sect.}~\ref{subsubsec:Deform_asym_bilinear} and~\ref{subsec:MovingCurved}, the superconvergence rate deterioration can be due to the grid representation error. Evidence
will be provided later when numerical errors are sufficiently large so that a group of superconvergence rates can be compared in a meaningful manner.

Note that all schemes presented in Figure~\ref{fig:DS_bilinear} do not explicitly apply the aliasing error control. As discussed in \textbf{Sect.}~\ref{subsubsec:new_discuss}, we can use projection-based polynomial filtering to control aliasing errors caused by the nonlinear interaction between flow quantities and the curvilinear geometric representation of space-time elements, thus stabilizing the convergence rates. We first present results of spatial polynomial filtering with different filtering strengths in Figure~\ref{fig:DS_Space_Filter_bilinear}.  Recall that with the parameter $\theta$, a $(1-\theta^2)$ portion of the energy in the solution between the original and projection spaces is filtered out. From Figure~\ref{fig:DS_Space_Filter_bilinear}, when $1 \%$ (i.e. $\theta^2 = 0.99$ ) energy is filtered out, the solution is not significantly affected; when a nontrivial portion, such as more than $10 \%$, of energy is filtered out, the convergence rate of the scheme can be stabilized at the value possessed by the projection space.  
This further explains the practical meaning of the scheme labels presented in Table~\ref{SubTab:Linear_Space}. As explained early in \textbf{Sect.}~\ref{subsec:MovingLinear}, and still taking ``\# of SPs" of 4 as an example, when the expected nominal order of accuracy is 4, the scheme is labeled with `\textbf{P}'. As shown in Figure~\ref{fig:DS_Space_Filter_bilinear}, its actual convergence rate without filtering is 3.8, close to 4. However, when the expected nominal order of accuracy is 3, the scheme is labeled with `\textbf{S}'. Apparently, the actual convergence rate of the solution is greater than 3 (or in another way, at least 3, which meets the order of accuracy expectation according to the conditions to meet \textbf{Theorem}~\ref{pro:moving}), and with polynomial filtering of considerable strength, the convergence rate can be stabilized at 3.      

\begin{figure}[]
  \centering
  \subfloat[SPs 4]{\includegraphics[width=0.45\textwidth]{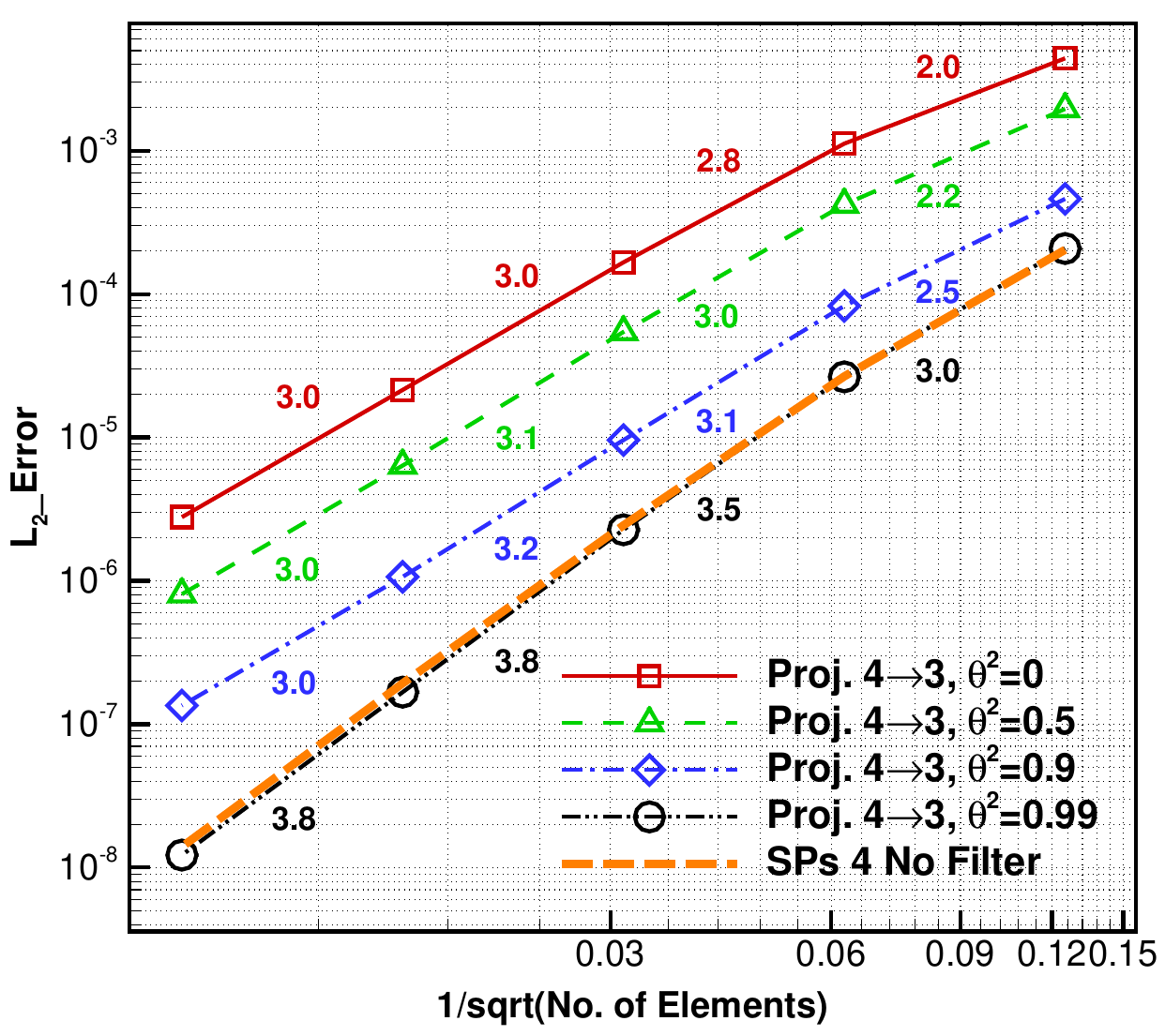}}\hspace{0.2em}
\hspace{0.2em}
  \subfloat[SPs 5]{\includegraphics[width=0.45\textwidth]{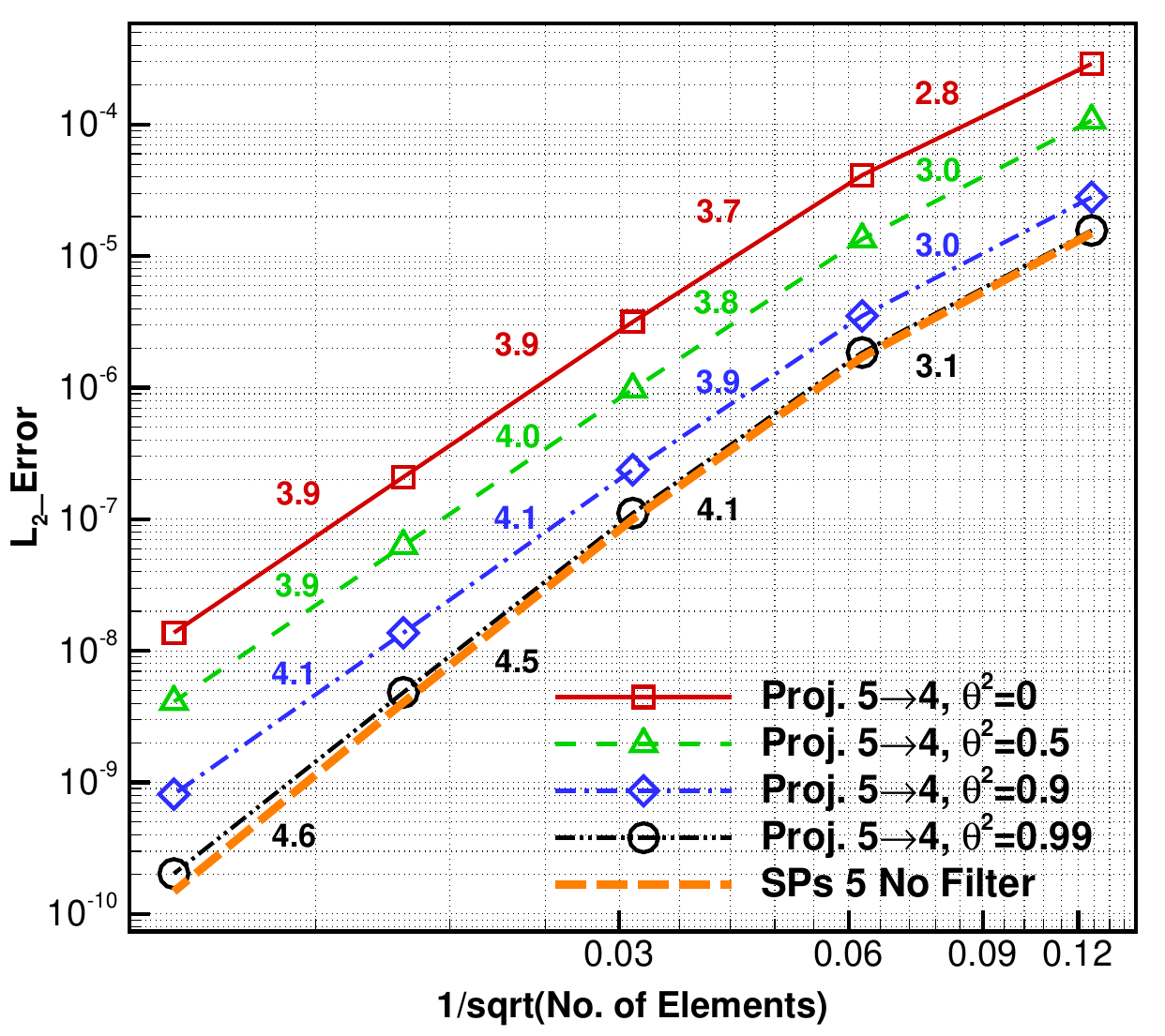}}
    \caption{Spatial convergence rates for spatial constructions using (a) 4 and (b) 5 solution points in each spatial dimension with different filtering strengths.}
  \label{fig:DS_Space_Filter_bilinear}
\end{figure}

When filtering is applied in the time dimension, we find that the superconvergence property of the temporal schemes is lost. As shown in Figure~\ref{fig:DS_Time_Filter_bilinear}, when a degree $j-1$ temporal polynomial construction with $j$ solution points is projected onto the space sit by degree $j-2$ polynomials, the local convergence rate is $j-1$; due to the error accumulation in time marching (i.e. when the time step size is halved, the simulation step doubles), the global convergence rate of the filtered scheme becomes $j-2$. It is clear that all schemes tested with different filtering strengths show the theoretical global convergence rate. Since the convergence rate of the filtered scheme drops from the superconvergent one $2j-1$ to the regular global value $j-2$ when $j$ temporal solution points are used, and the absolute error of the filtered scheme can be significantly larger than that of the original scheme, temporal polynomial filtering is not recommended even if a scheme cannot reach its theoretical superceonvergence rate, such as the schemes with 4 or 5 temporal solution points. 

\begin{figure}[]
  \centering
  \subfloat[SPs 3]{\includegraphics[width=0.32\textwidth]{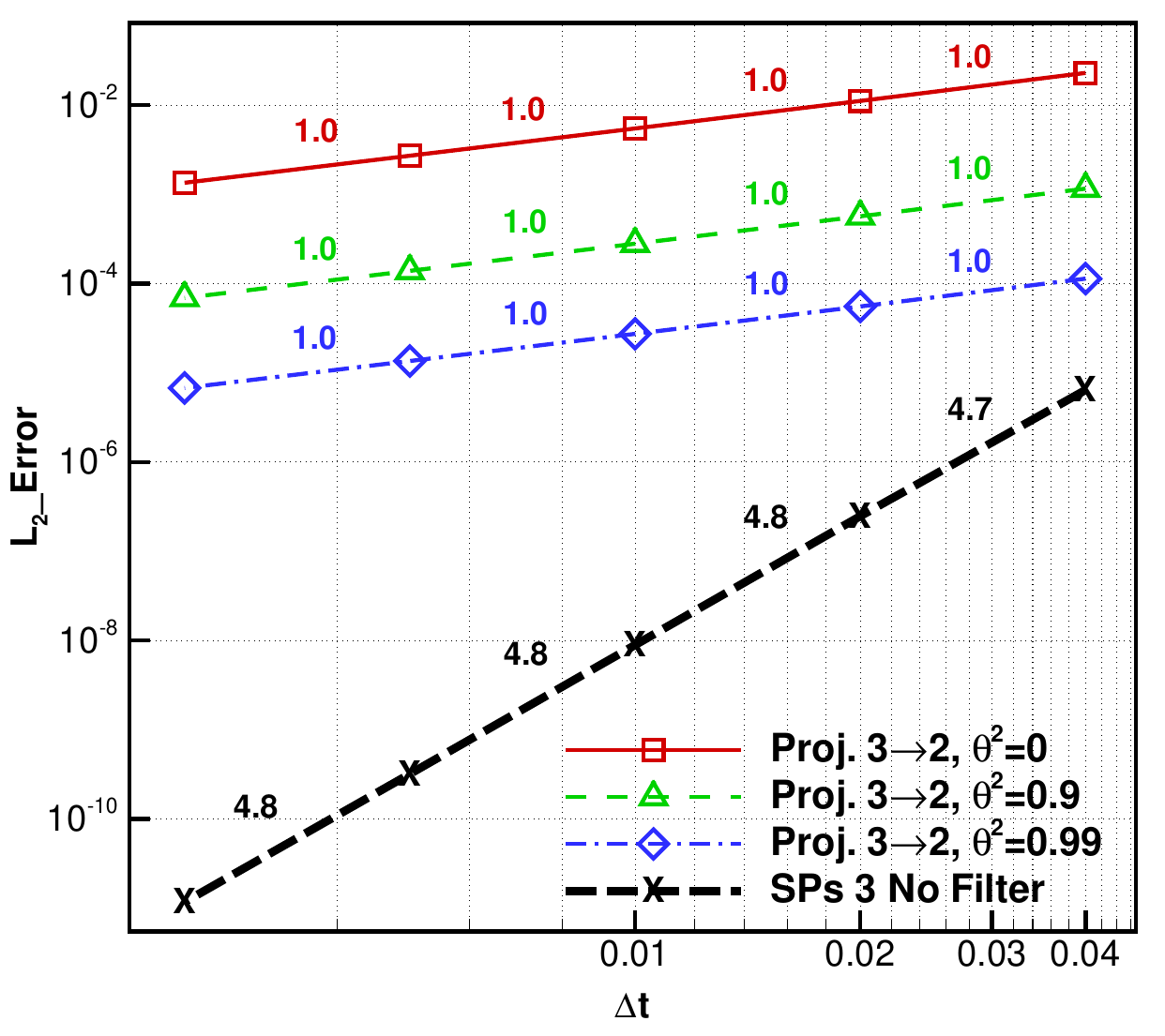}}\hspace{0.2em}
\hspace{0.2em}
  \subfloat[SPs 4]{\includegraphics[width=0.32\textwidth]{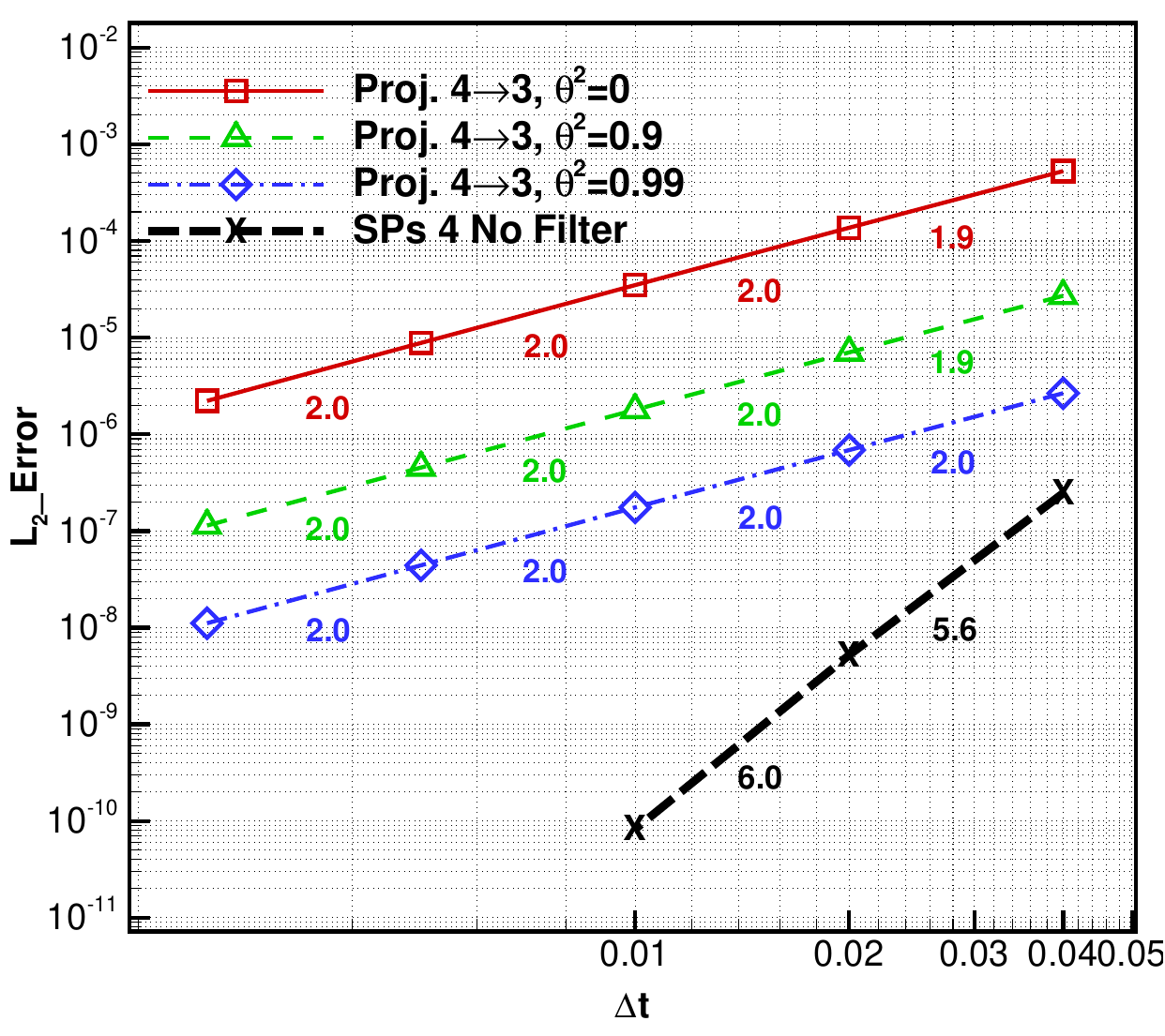}}
\hspace{0.2em}
  \subfloat[SPs 5]{\includegraphics[width=0.32\textwidth]{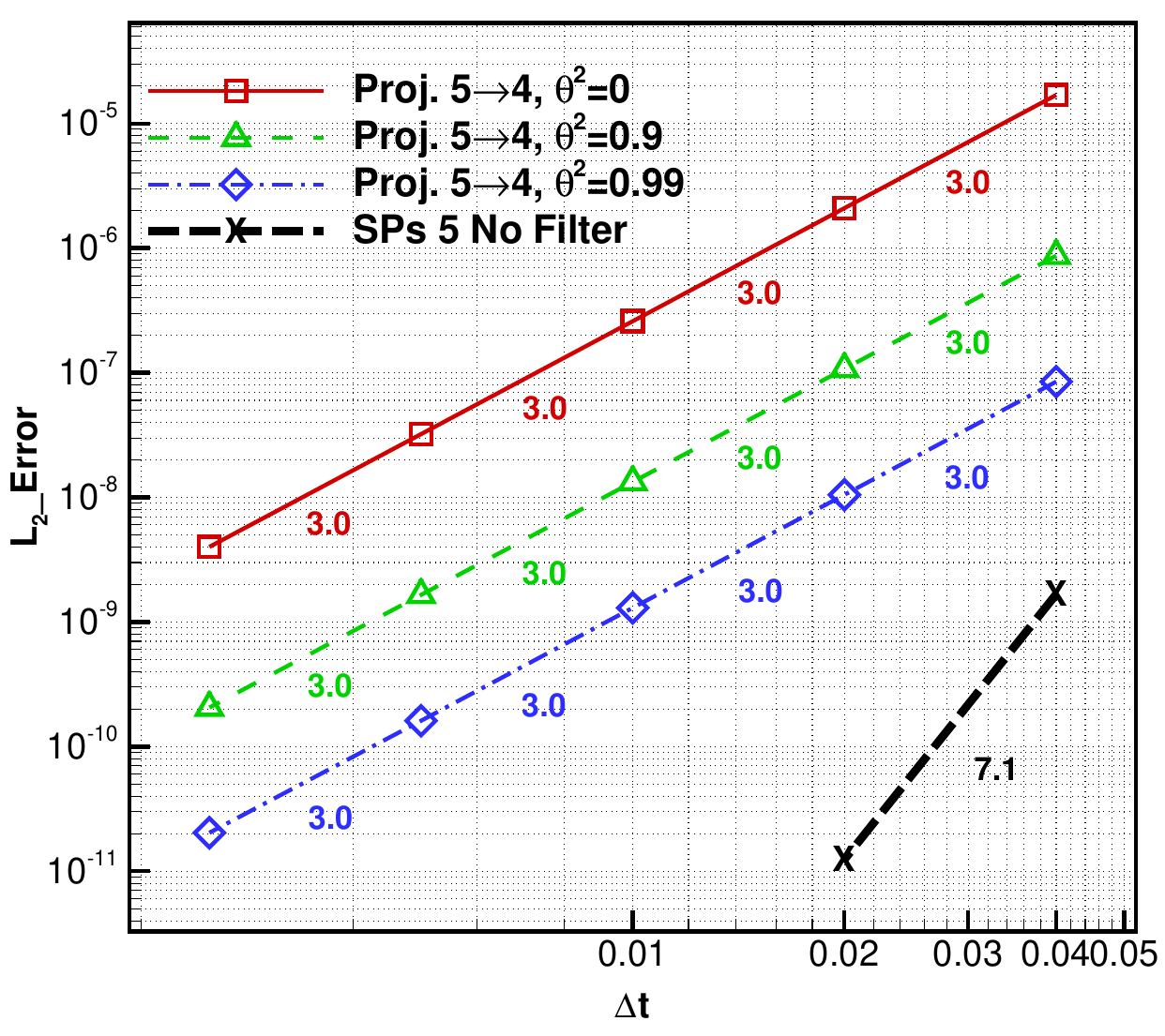}}
    \caption{Global temporal convergence rates for temporal constructions using (a) 3, (b) 4, and (c) 5 solution points in the time dimension with different filtering strengths. The convergence rate of the filtered scheme drops from the superconvergent one $2j-1$ to the regular global value $j-2$, where $j$ is the number of temporal solution points.}
  \label{fig:DS_Time_Filter_bilinear}
\end{figure}

\subsubsection{Deformable grids without spatial and temporal symmetry}
\label{subsubsec:Deform_asym_bilinear}
To further study the convergence property of the moving grid STFR method, a moving and deforming circular domain case designed in~\cite{Wukie_EtAl_AIAA_2023} is studied in this subsection. Since the purpose of this study is to test whether the time-varying Jacobian and metrics of a moving grid with spatiotemporal symmetry-breaking large deformation can disturb the flow fields, only the 2D linear wave propagation problem with analytical boundary conditions is used for testing purposes. The grid motion within a circular domain of a radius $0.5$ centered at $(0,0)$ is given as follows: 
\begin{equation}
\begin{array}{cl}
\left (
\begin{array}{c}
      x(r_0, \theta_0, t) \\
      y(r_0, \theta_0, t)
\end{array} \right )  = &
\left (
\begin{array}{ccc}
      1 & 0 & 0 \\
      0 & 1 & \alpha (t)
\end{array} \right )  
\left (
\begin{array}{ccc}
      \text{cos} \left(A_{\theta} \alpha (t) \right) & -\text{sin} \left(A_{\theta} \alpha (t) \right)  & 0\\
      \text{sin} \left(A_{\theta} \alpha (t) \right) & \text{cos} \left(A_{\theta} \alpha (t) \right) & 0 \\
      0 & 0 & 1
\end{array} \right ) \\
&
\left (
\begin{array}{ccc}
      \psi(t) & 0 & 0\\
      0 & 1/\psi(t) & 0 \\
      0 & 0 & 1
\end{array} \right )
\left (
\begin{array}{c}
      r_0 \text{cos} \left( \theta_g (r_0, \theta_0, t) \right) \\
      r_0 \text{sin} \left( \theta_g (r_0, \theta_0, t) \right) \\
      1
\end{array} \right ).
\end{array}
\label{eq:mesh_motion_asym_def}
\end{equation}
Herein, $(r_0, \theta_0)$ is the relative position of the grid point to the origin $(0,0)$ at $t=0$ measured in the polar coordinate system. The functions $\alpha (t)$, $\psi (t)$, and $\theta_g (r_0,\theta_0,t)$ are defined as follows:
\begin{equation}
\Biggl\{
\begin{array}{l}
\alpha(t) = t^3 (8-3t) / 16\\
\psi(t) = 1 + (A_a - 1) \alpha(t)\\
\theta_g (r_0, \theta_0, t) = \theta_0 + A_g 
f_g(r_0, \theta_0, t)
\end{array},
\label{eq:func_asym_def}
\end{equation}
where the function $f_g$ is defined as
\begin{equation}
f_g (r_0,\theta_0,t) = \frac{t^6}{t^6 + 0.01} \left(16 r_0^4 + \eta (t,10,0.7) (\text{cos} (32 \pi r_0^4 )-1) \right) \eta (\theta_0, 1, 0.7)
\label{eq:fg}
\end{equation}
with the symmetry-breaking perturbation $\eta$ defined as
\[
\eta(\lambda,\omega,\tau) = \text{sin} \left (\omega \lambda + \tau \left(1-\text{cos} (\omega \lambda) \right) \right).
\]
Note that the definition of the function $f_g$ in Eq.~\eqref{eq:fg} is slightly different to that defined in~\cite{Wukie_EtAl_AIAA_2023} to ensure that there is no motion jump at $t=0$.  
In the above equations, $A_{\theta}$ is a constant rotation amplitude, $A_a$ is a constant amplification factor for the deformation of a circle into an ellipse, $A_g$ is a constant volume deformation amplitude. Following~\cite{Wukie_EtAl_AIAA_2023}, they are set as $A_{\theta} = \pi$, $A_a=1.5$, and $A_g=0.15$. In the function $\eta$, the variable $\lambda$ represents the independent variables, such as $t$ and $\theta_0$, and the variables $\omega$ and $\tau$ are the angular frequency and phase-lag factor, respectively.
The physical domain at $t=0$ and the deformed one at $t=1$, together with the corresponding linear wave fields, are shown in Figure~\ref{fig:Cylinder_Flow_Comp}. We observe that the wave field is not disturbed by the grid motion and deformation.

\begin{figure}[!htbp]
  \centering
  \subfloat[$t=0$]{\includegraphics[width=0.45\textwidth]{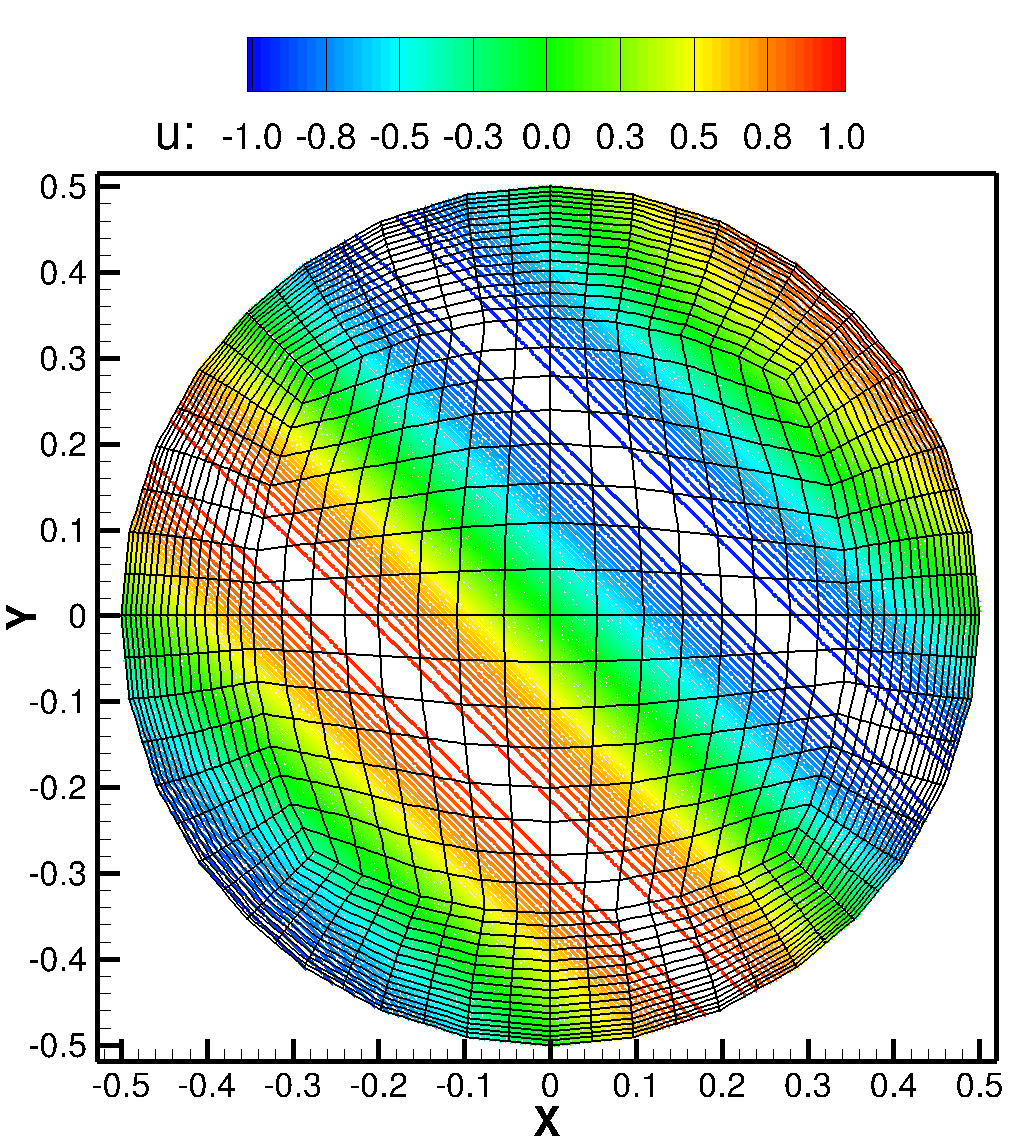}}\hspace{0.2em}
\hspace{0.2em}
  \subfloat[$t=1$]{\includegraphics[width=0.45\textwidth]{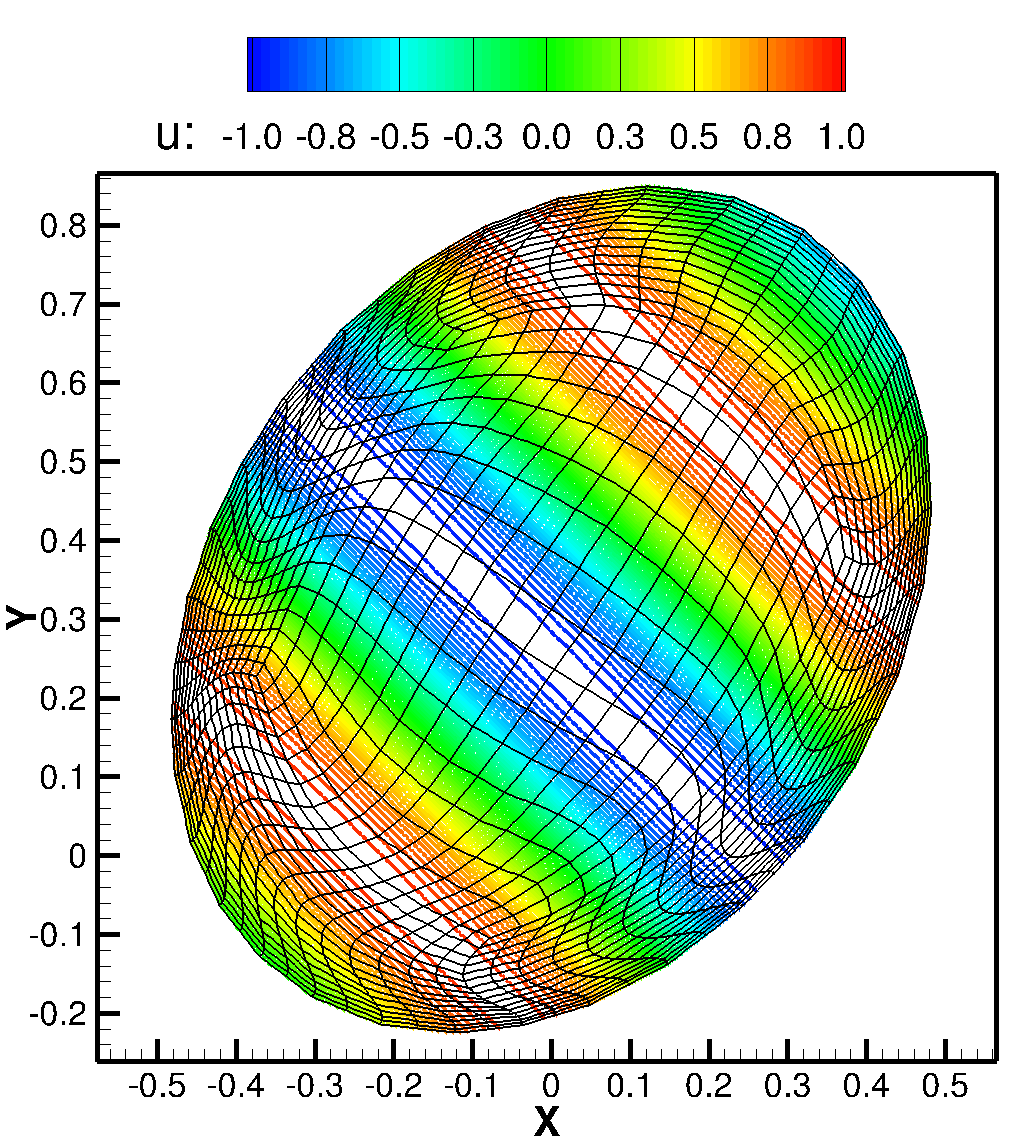}}
    \caption{The wave field and the corresponding grid at (a) $t=0$ and (b) $t=1$ for the 2D linear wave equation on a moving and deforming circular domain. 
    }
  \label{fig:Cylinder_Flow_Comp}
\end{figure}

\begin{figure}[!htbp]
  \centering
  \subfloat[Spatial Convergence]{\includegraphics[width=0.45\textwidth]{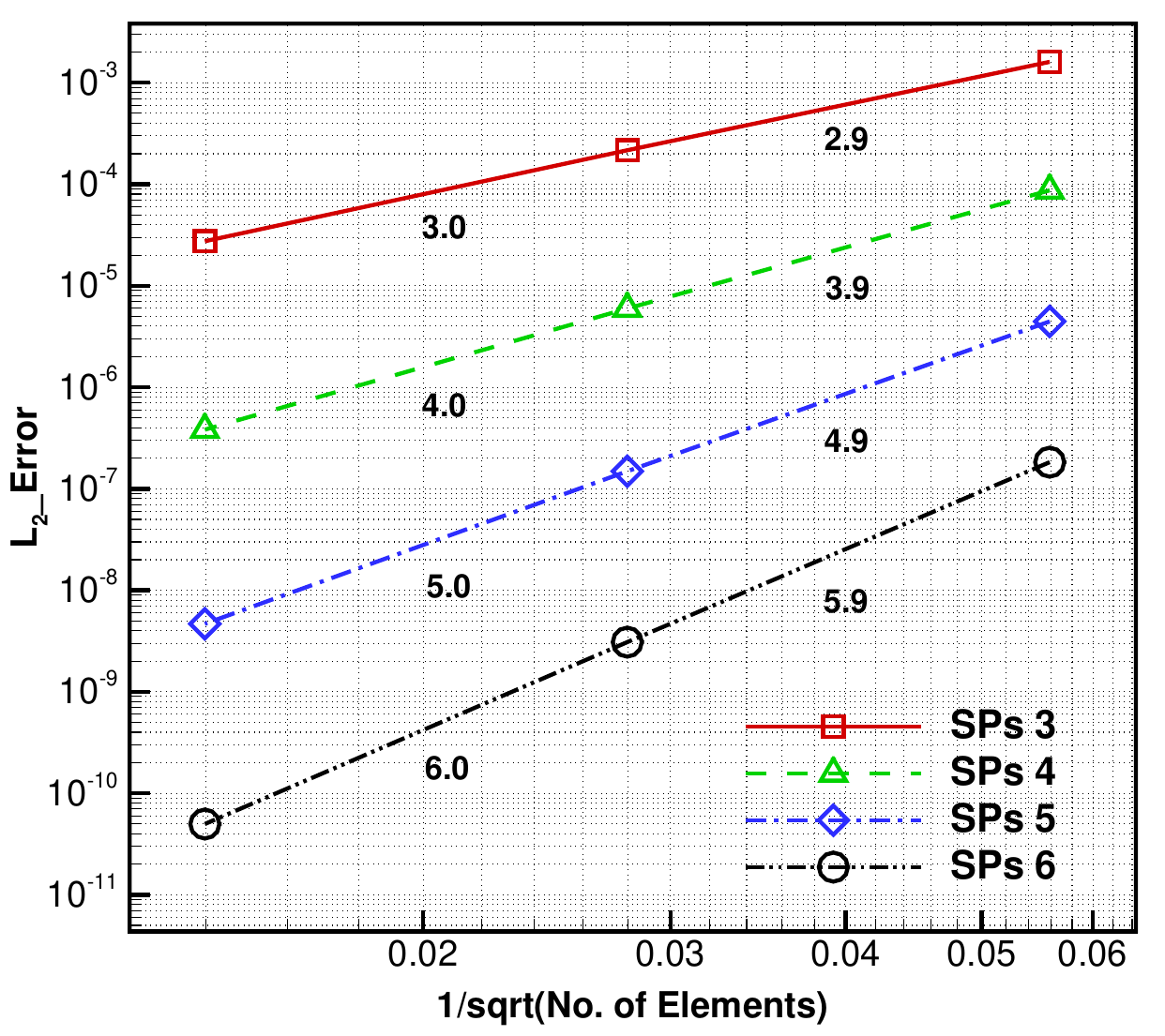}
  \label{fig:Cylinder_bilinear_space}
  }
\hspace{0.2em}
  \subfloat[Temporal Convergence]{\includegraphics[width=0.45\textwidth]{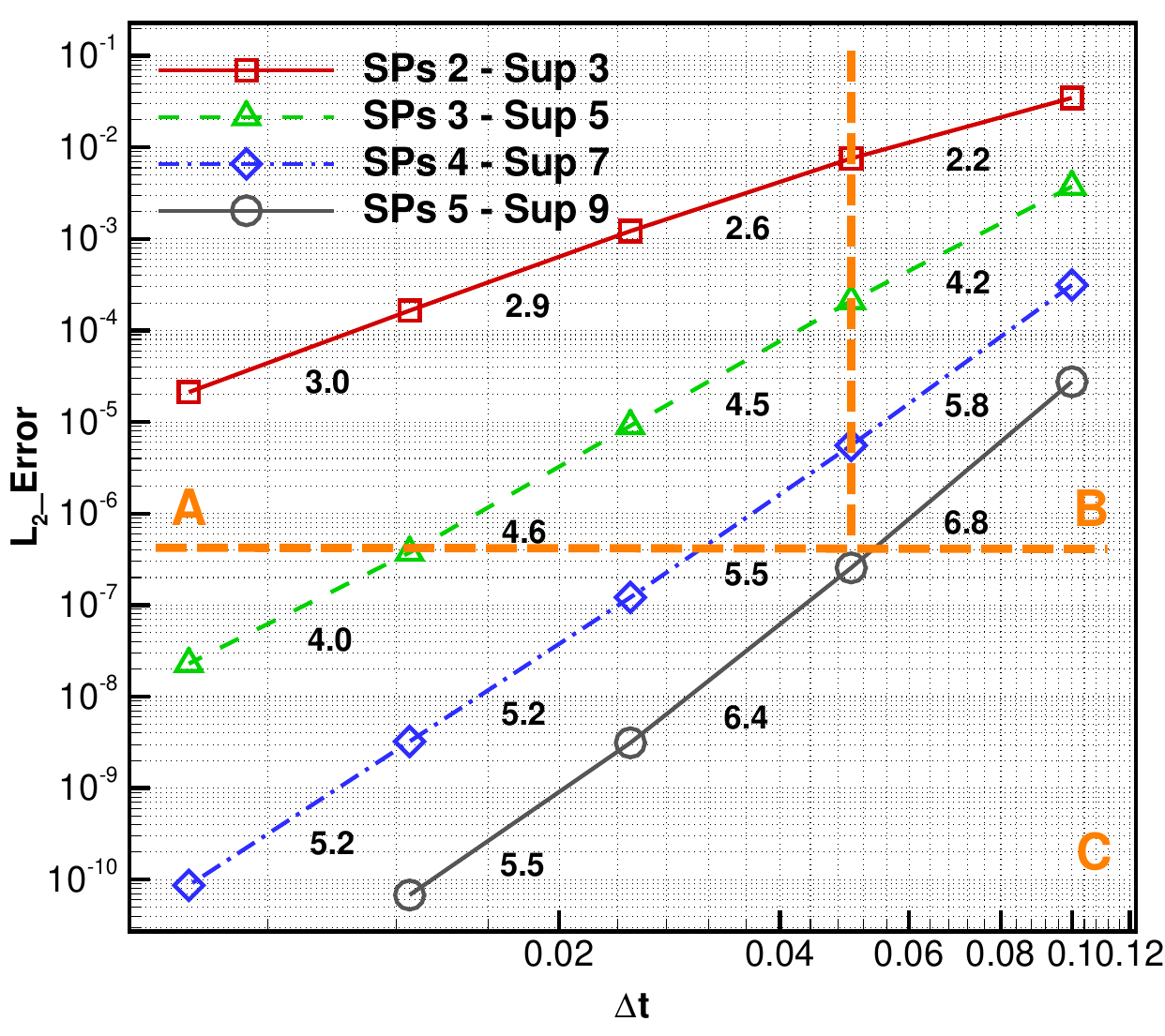}
  \label{fig:Cylinder_bilinear_time}}
    \caption{(a) Spatial convergence rates for spatial constructions using 3 to 6 solution points in each spatial dimension; and (b) temporal convergence rates for temporal constructions using 2 to 5 solution points in the time dimension for the 2D linear wave equation on a moving and deforming circular domain tessellated with linear space-time elements.}
  \label{fig:Cylinder_bilinear}
\end{figure}

We now present results from the space-time order of accuracy studies in Figure~\ref{fig:Cylinder_bilinear}. The spatial convergence rates for all STFR schemes reach their optimal rates.
The temporal convergence rates show some interesting trends, which becomes more apparent in Figure~\ref{fig:Cylinder_curved_time} when curvilinear space-time elements are used to represent the moving/deforming grid. When 2 temporal solution points are used, the scheme's order of accuracy converges toward the superconvergent one, i.e. 3; when 3 temporal solution points are used, the scheme's order of accuracy first converges toward the superconvergent one, i.e. 5, and then starts to decrease when the time step size is further refined; and when 4 or 5 temporal solution points are used, the schemes show superconvergence but not at the optimal rate, and this rate decreases in the time step refinement study.


To explain this phenomenon, 
we divide the $\Delta t$--Error diagram into three regions, as shown in Figure~\ref{fig:Cylinder_bilinear_time}. In Region ``A", the physical errors dominate. This is why the order of accuracy of the schemes converge towards the optimal superconvergence rate. In Region ``B", significant space-time element representation errors exist. In Figure~\ref{fig:Cylinder_Motion_Track}, the trajectories $x$ and $y$ of the grid point originally at $(r_0, \theta_0 )=(0.4,1.07274)$ within the time interval $[0,1]$ under different $\Delta t$ conditions are displayed. It shows that although at the end time the $x$ and $y$ are the same for all cases, their trajectories are different. This results in the uncertain convergence rate for all schemes tested in that region.
In Region ``C", the space-time element representation errors interact with the physical errors, causing convergence rate deterioration.
This will be further confirmed in Figure~\ref{fig:Cylinder_curved_time} when curvilinear space-time elements are used to reduce the space-time element representation errors.

\begin{figure}[]
  \centering
  \subfloat[]{\includegraphics[width=0.48\textwidth]{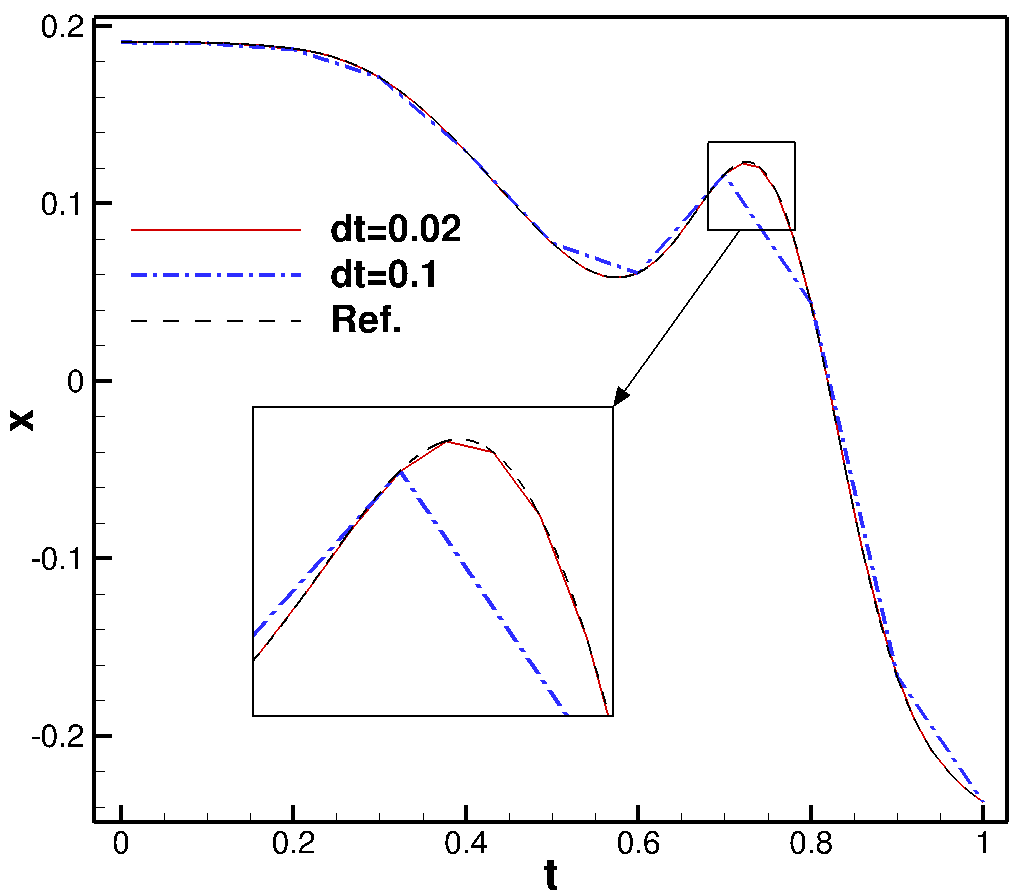}}
\hspace{0.2em}
  \subfloat[]{\includegraphics[width=0.48\textwidth]{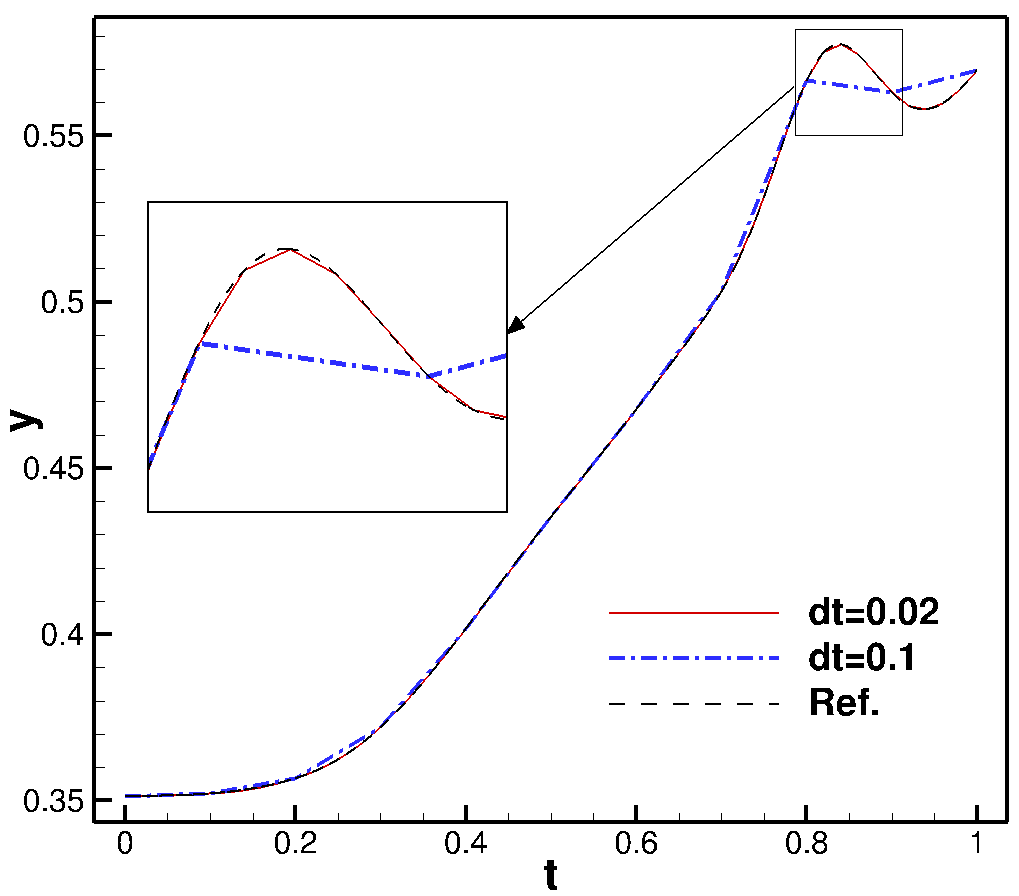}}
    \caption{The trajectories (a) $x$ and (b) $y$ of the grid point originally at $(r_0, \theta_0 )=(0.4,1.07274)$ under different $\Delta t$ conditions when the grid motion is approximated as a linear polynomial.}
  \label{fig:Cylinder_Motion_Track}
\end{figure}

\begin{Observation}
STFR schemes show superconvergence in the temporal direction on deformable grids with large deformation, although they may not reach the optimal rate for very high-order setups due to insufficient grid motion trajectory resolution compared to the resolution of flow variables. The same observation also applies to the curvilinear space-time element cases tested in \textbf{Sect.}~\ref{subsec:MovingCurved}.
\end{Observation}

\begin{Observation}
STFR schemes with the label `\textbf{S}' (i.e. satisfying \textbf{Theorem}~\ref{pro:moving}) can at least achieve the nominal order of accuracy when no filtering operation is applied. The same observation also applies to the curvilinear space-time element cases tested in \textbf{Sect.}~\ref{subsec:MovingCurved}.
\end{Observation}

\begin{Observation}
\label{Obs:filtering}
Spatiotemporal projection-based polynomial filtering can stabilize the convergence rate of the STFR scheme. Specifically, spatial filtering can be an effective way to control the aliasing errors when implementing Eq.~\eqref{eq:STFR_new_phy} with \textbf{Key Procedures}~\ref{alg:2DFR_key_new}. Temporal filtering can result in the loss of superconvergence.   
\end{Observation}




\subsection{Curvilinear tensor-product space-time elements in a moving domain}
\label{subsec:MovingCurved}
In this section, curvilinear space-time grids are used to represent moving/deforming domains. The same two grid deformation strategies, i.e. Eq.~\eqref{eq:mesh_motion_sym_def} and~\eqref{eq:mesh_motion_asym_def}, used in \textbf{Sect.}~\ref{subsec:MovingLinear}, are employed here. Quadratic space-time elements with $l=2$ and $n=2$ are used to better capture the shape and motion of moving and deforming grids. In Figure~\ref{SubFig:Curv_Space_Deform}, the grid tessellated with $16 \times 16$ curvilinear elements is plotted on top of that with $8 \times 8$ curvilinear elements for comparison. Although they do not strictly match each other, the errors caused by imperfect geometric representation of deforming grids during the convergence rate study are better controlled compared to the linear grid cases used in \textbf{Sect.}~\ref{subsec:MovingLinear} (see Figure~\ref{subfig:Sin_deform_mesh_comp}). Similarly, as shown in Figure~\ref{SubFig:Curv_Time_Deform_x} and~\ref{SubFig:Curv_Time_Deform_y}, quadratic representations of the motion can better capture the grid point trajectories compared to linear representations in Figure~\ref{fig:Cylinder_Motion_Track}.

\begin{figure}[!htbp]
  \centering
  \subfloat[]{\includegraphics[width=0.28\textwidth]{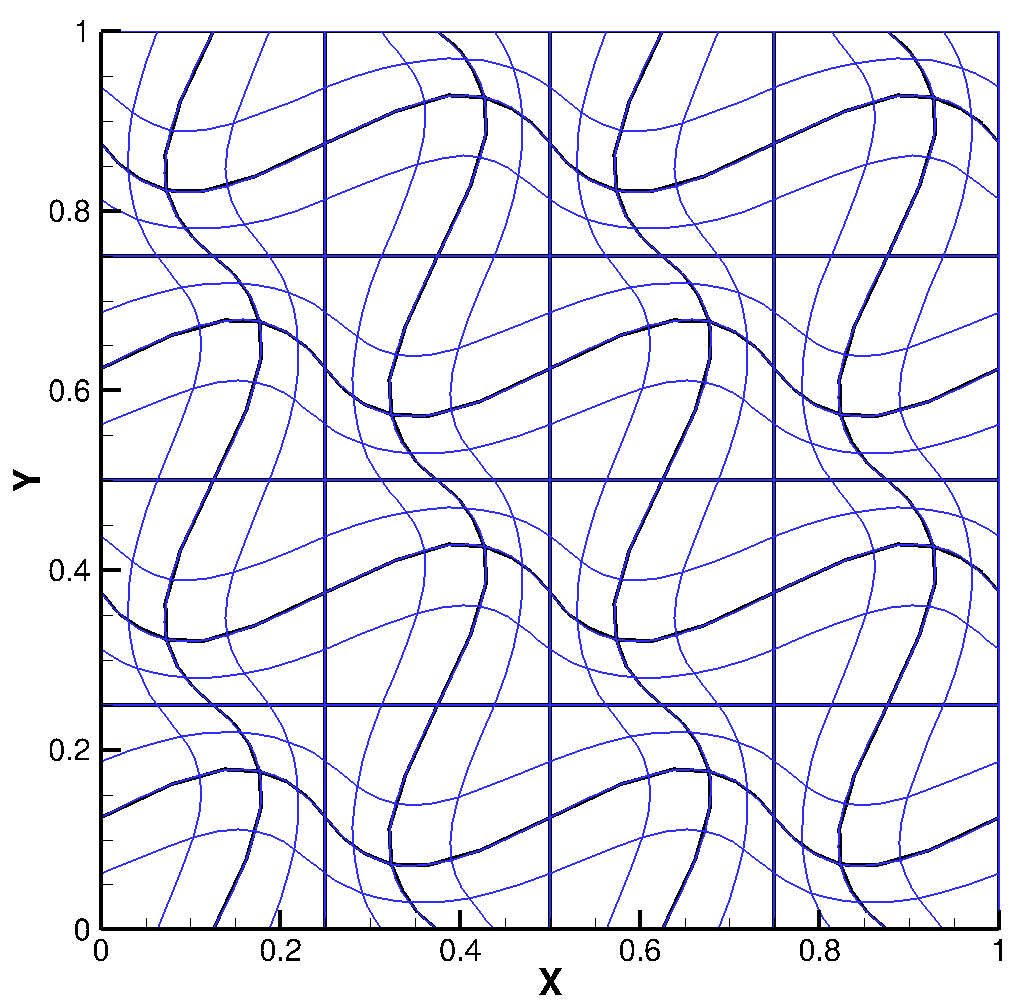}
  \label{SubFig:Curv_Space_Deform}
  }
\hspace{0.2em}
  \subfloat[]{\includegraphics[width=0.31\textwidth]{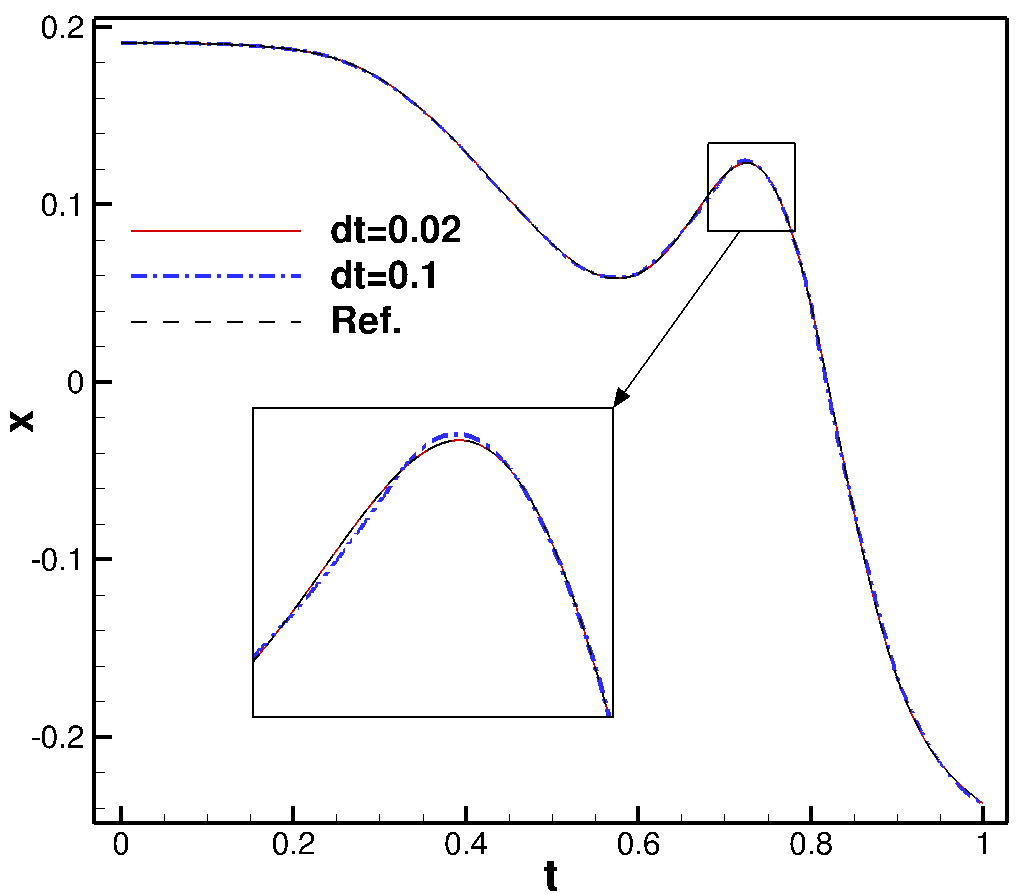}
  \label{SubFig:Curv_Time_Deform_x}
  }
\hspace{0.2em}
  \subfloat[]{\includegraphics[width=0.31\textwidth]{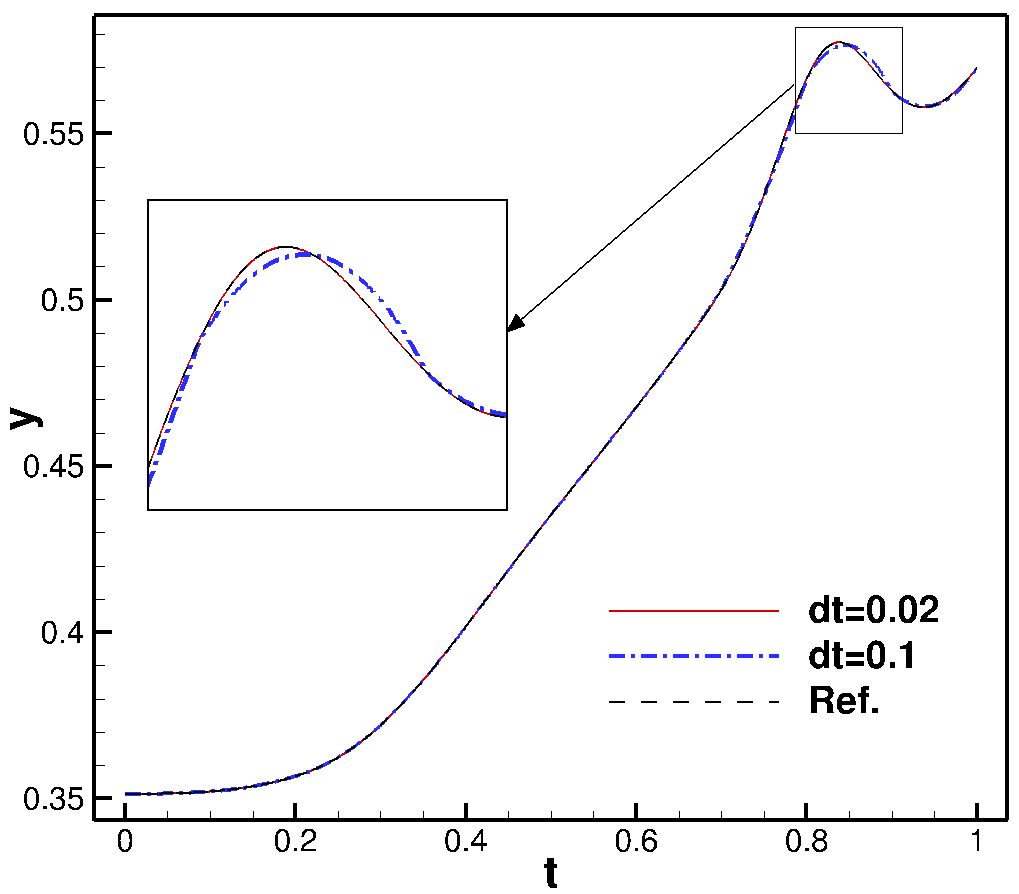}
  \label{SubFig:Curv_Time_Deform_y}
  }
    \caption{(a) The grid with $16 \times 16$ curvilinear elements plotted on top of that with $8 \times 8$ curvilinear elements with the motion described by Eq.~\eqref{eq:mesh_motion_sym_def}. The trajectories (b) $x$ and (c) $y$ of the grid point originally at $(r_0, \theta_0 )=(0.4,1.07274)$ under different $\Delta t$ conditions when the grid motion described by Eq.~\eqref{eq:mesh_motion_asym_def} is approximated as a quadratic polynomial.}
  \label{fig:Curved_Mesh}
\end{figure}

Similar to \textbf{Sect.}~\ref{subsec:MovingLinear}, we list the accuracy label, i.e. `\textbf{S}', `\textbf{P}', or `\textbf{V}', of the STFR schemes with different nominal spatial approximation degree $k$ and temporal approximation degree $m$ for curvilinear spatial and temporal representations of grid elements with $l=2$ and $n=2$ in Table~\ref{Tab:Curvilinear}. Compared to the linear space-time grid cases with $l=1$ and $n=1$, it becomes harder for the STFR schemes to meet the conditions required by \textbf{Theorem}~\ref{pro:moving} when quadratic space-time elements are used to represent moving/deforming grids. As a result, numerical simulations are more prone to being affected by aliasing errors due to the nonlinear interaction between flow quantities and the geometric representation of space-time elements.

\begin{table}[]
\caption{Scheme labels based on the nominal order of accuracy when curvilinear space-time elements with $l=2$ and $n=2$ are used to represent the moving/deforming grids.}
\label{Tab:Curvilinear}
\centering
\subfloat[Spatial Discretizations] {
\label{SubTab:Curv_Space}
\begin{tabular}{|cccc|}
\hline
\multicolumn{4}{|c|}{Desired OoA = \# of SPs}                                                                     \\ \hline
\multicolumn{1}{|c|}{$k$}                & \multicolumn{1}{c|}{Nominal OoA}        & \multicolumn{1}{c|}{\# of SPs} & Label \\ \hline
\multicolumn{1}{|c|}{\multirow{4}{*}{2}} & \multicolumn{1}{c|}{\multirow{4}{*}{3}} & \multicolumn{1}{c|}{3}         & `V'   \\ \cline{3-4} 
\multicolumn{1}{|c|}{}                   & \multicolumn{1}{c|}{}                   & \multicolumn{1}{c|}{4}         & `V'   \\ \cline{3-4} 
\multicolumn{1}{|c|}{}                   & \multicolumn{1}{c|}{}                   & \multicolumn{1}{c|}{5}         & `P'   \\ \cline{3-4} 
\multicolumn{1}{|c|}{}                   & \multicolumn{1}{c|}{}                   & \multicolumn{1}{c|}{6}         & `S'   \\ \hline
\multicolumn{1}{|c|}{\multirow{4}{*}{3}} & \multicolumn{1}{c|}{\multirow{4}{*}{4}} & \multicolumn{1}{c|}{4}         & `V'   \\ \cline{3-4} 
\multicolumn{1}{|c|}{}                   & \multicolumn{1}{c|}{}                   & \multicolumn{1}{c|}{5}         & `P'   \\ \cline{3-4} 
\multicolumn{1}{|c|}{}                   & \multicolumn{1}{c|}{}                   & \multicolumn{1}{c|}{6}         & `P'   \\ \cline{3-4} 
\multicolumn{1}{|c|}{}                   & \multicolumn{1}{c|}{}                   & \multicolumn{1}{c|}{7}         & `S'   \\ \hline
\multicolumn{1}{|c|}{\multirow{4}{*}{4}} & \multicolumn{1}{c|}{\multirow{4}{*}{5}} & \multicolumn{1}{c|}{5}         & `P'   \\ \cline{3-4} 
\multicolumn{1}{|c|}{}                   & \multicolumn{1}{c|}{}                   & \multicolumn{1}{c|}{6}         & `P'   \\ \cline{3-4} 
\multicolumn{1}{|c|}{}                   & \multicolumn{1}{c|}{}                   & \multicolumn{1}{c|}{7}         & `P'   \\ \cline{3-4} 
\multicolumn{1}{|c|}{}                   & \multicolumn{1}{c|}{}                   & \multicolumn{1}{c|}{8}         & `S'   \\ \hline
\end{tabular}
}
\quad
\subfloat[Temporal Discretizations] {
\label{SubTab:Curv_Time}
\begin{tabular}{|cccc|}
\hline
\multicolumn{4}{|c|}{\begin{tabular}[c]{@{}c@{}}Desired OoA with superconvergence \\ = 2 $\times$ (\# of SPs) $-$ 1\end{tabular}}                                                                                                                  \\ \hline
\multicolumn{1}{|c|}{$m$}                & \multicolumn{1}{c|}{\begin{tabular}[c]{@{}c@{}}Nominal\\ superconvergence\end{tabular}} & \multicolumn{1}{c|}{\# of SPs} & Label \\ \hline
\multicolumn{1}{|c|}{\multirow{6}{*}{1}} & \multicolumn{1}{c|}{\multirow{6}{*}{3}}                                                       & \multicolumn{1}{c|}{2}         & `V'   \\ \cline{3-4} 
\multicolumn{1}{|c|}{}                   & \multicolumn{1}{c|}{}                                                                         & \multicolumn{1}{c|}{3}         & `V'   \\ \cline{3-4} 
\multicolumn{1}{|c|}{}                   & \multicolumn{1}{c|}{}                                                                         & \multicolumn{1}{c|}{4}         & `V'   \\ \cline{3-4} 
\multicolumn{1}{|c|}{}                   & \multicolumn{1}{c|}{}                                                                         & \multicolumn{1}{c|}{5}         & `P'   \\ \cline{3-4} 
\multicolumn{1}{|c|}{}                   & \multicolumn{1}{c|}{}                                                                         & \multicolumn{1}{c|}{6}         & `P'   \\ \cline{3-4} 
\multicolumn{1}{|c|}{}                   & \multicolumn{1}{c|}{}                                                                         & \multicolumn{1}{c|}{7}         & `S'   \\ \hline
\multicolumn{1}{|c|}{\multirow{6}{*}{2}} & \multicolumn{1}{c|}{\multirow{6}{*}{5}}                                                       & \multicolumn{1}{c|}{3}         & `V'   \\ \cline{3-4} 
\multicolumn{1}{|c|}{}                   & \multicolumn{1}{c|}{}                                                                         & \multicolumn{1}{c|}{4}         & `V'   \\ \cline{3-4} 
\multicolumn{1}{|c|}{}                   & \multicolumn{1}{c|}{}                                                                         & \multicolumn{1}{c|}{5}         & `P'   \\ \cline{3-4} 
\multicolumn{1}{|c|}{}                   & \multicolumn{1}{c|}{}                                                                         & \multicolumn{1}{c|}{6}         & `P'   \\ \cline{3-4} 
\multicolumn{1}{|c|}{}                   & \multicolumn{1}{c|}{}                                                                         & \multicolumn{1}{c|}{7}         & `P'   \\ \cline{3-4} 
\multicolumn{1}{|c|}{}                   & \multicolumn{1}{c|}{}                                                                         & \multicolumn{1}{c|}{8}         & `S'   \\ \hline
\end{tabular}
}
\end{table}


We first present results in convergence studies of the deformable grid case with the motion described by Eq.~\eqref{eq:mesh_motion_sym_def}. 
The results of the spatial and temporal convergence rates are shown in Figure~\ref{fig:DS_curved}. Compared to the linear spatial grid case presented in Figure~\ref{fig:DS_bilinear_space}, the spatial convergence rate at the coarser grid side has recovered to the optimal value due to the better geometric representation. Similar to the linear motion representation case presented in Figure~\ref{fig:DS_bilinear_time}, the schemes with 2 and 3 temporal solution points show the optimal superconvergence rate. The superconvergent feature of the scheme with 4 temporal solution points improves due to the reduced motion trajectory representation errors.  The deterioration of the superconvergence rate of the scheme with 5 temporal solution points is due to accuracy limitation (i.e. with an absolute error around $\mathcal{O} (10^{-12})$) of the numerical schemes used in this study.  

\begin{figure}[!htbp]
  \centering
  \subfloat[Spatial Convergence]{\includegraphics[width=0.45\textwidth]{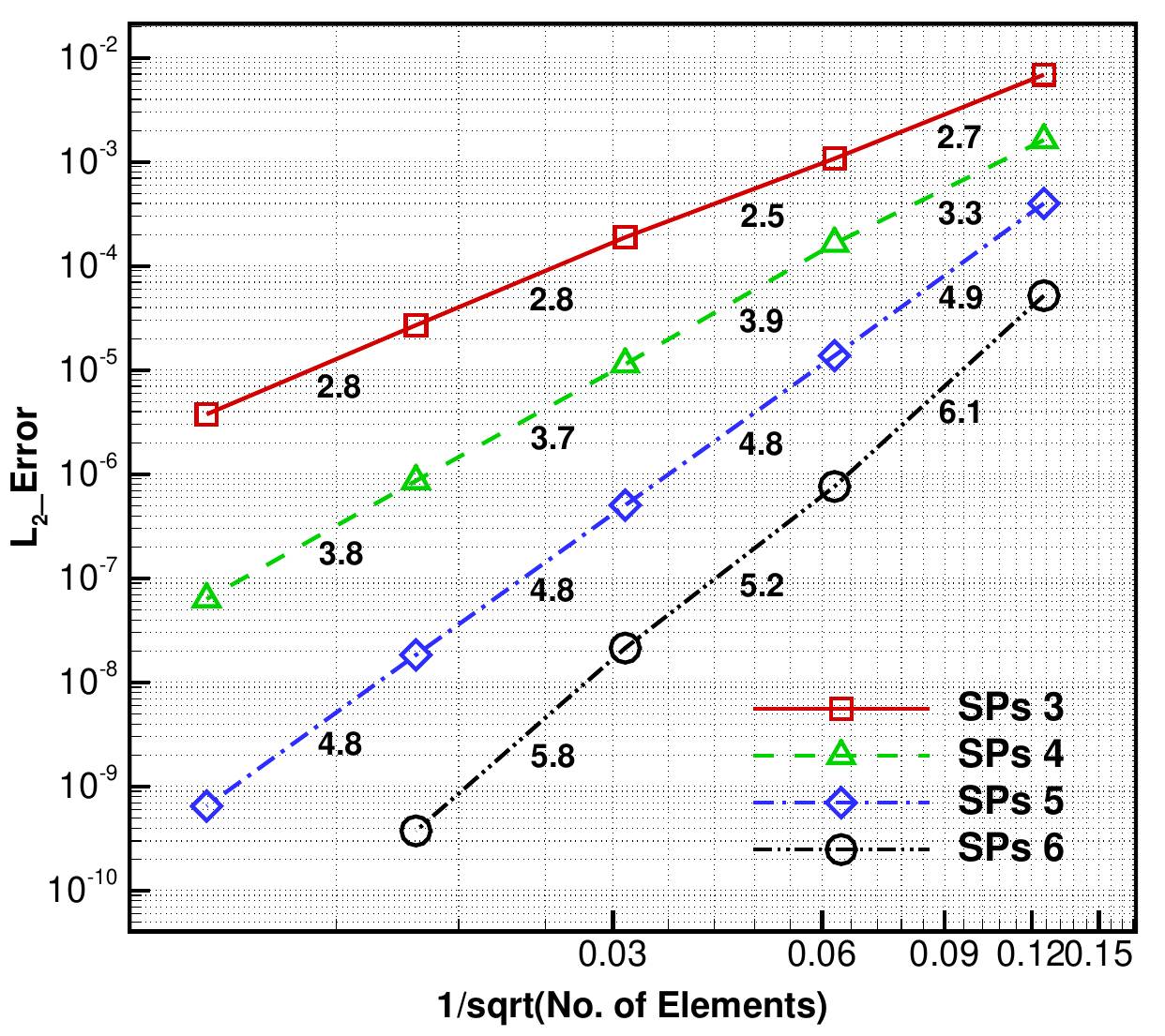}}
\hspace{0.2em}
  \subfloat[Temporal Convergence]{\includegraphics[width=0.45\textwidth]{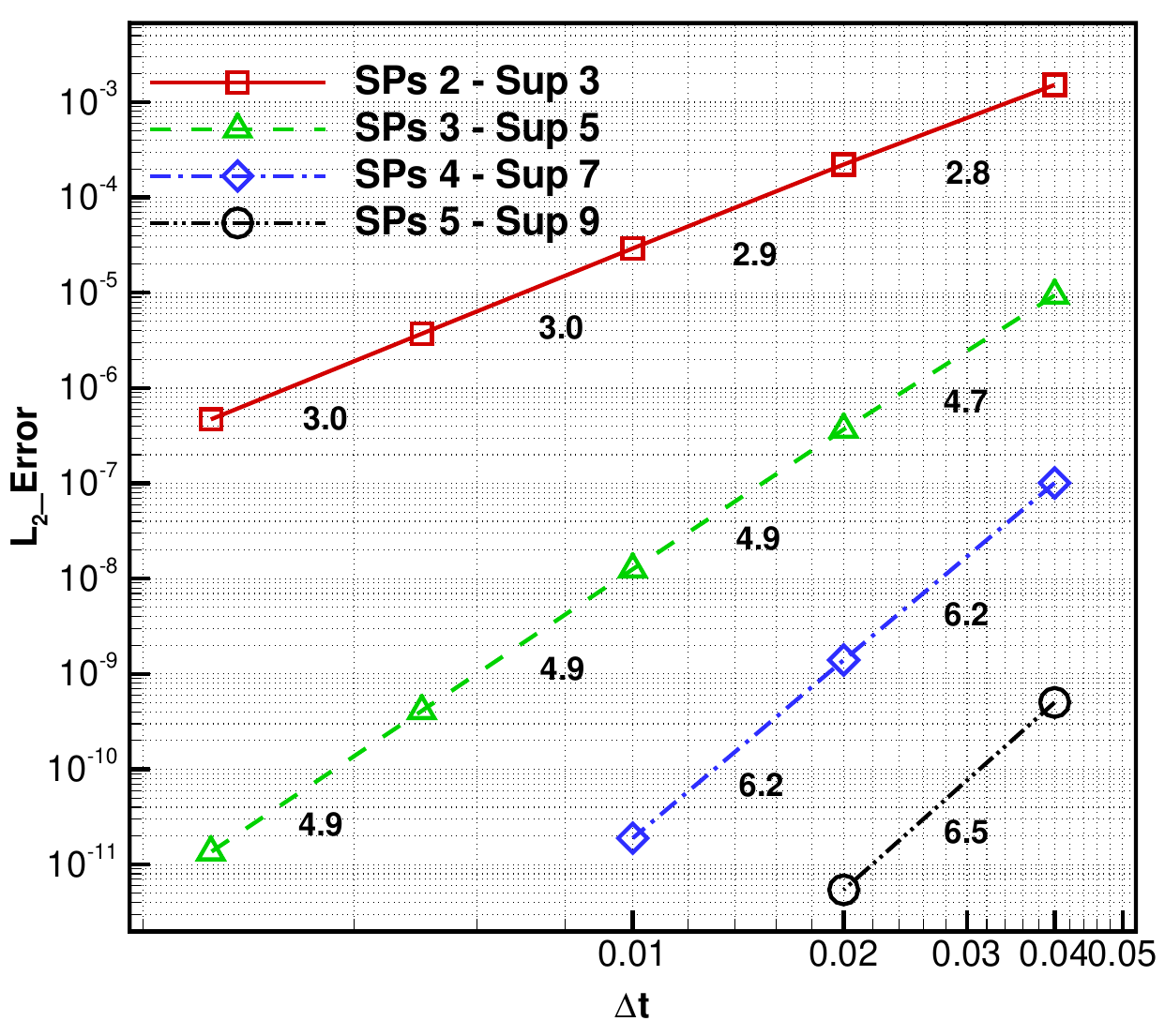}}
    \caption{(a) Spatial convergence rates for spatial constructions using 3 to 6 solution points in each spatial dimension; and (b) temporal convergence rates for temporal constructions using 2 to 5 solution points in the time dimension for the 2D linear wave equation on a deforming square domain tessellated with quadratic space-time elements.}
  \label{fig:DS_curved}
\end{figure}

Spatial and temporal projection-based polynomial filtering is then applied to the schemes tested in Figure~\ref{fig:DS_curved}, and results are presented in Figure~\ref{fig:DS_curved_fil} and~\ref{fig:DS_Curved_Time_Fil}. As expected, we have similar observations as those obtained from the linear space-time grids; see \textbf{Key Observation}~\ref{Obs:filtering}. According to Table~\ref{SubTab:Curv_Space}, the scheme with 6 solution points in each spatial dimension has the label `\textbf{S}', `\textbf{P}', and `\textbf{P}' when the nominal order of accuracy is expected to 3, 4, and 5. Therefore, the projection-based polynomial filtering to $\mathbb{Q}^2$ (i.e. the tensor-product space, on which the scheme with 3 solution points in each spatial dimension is constructed), $\mathbb{Q}^3$, and $\mathbb{Q}^4$ spaces with the filtering parameter $\theta^2=0.9$ is applied to the scheme with 6 solution points in each spatial dimension. We observe from Figure~\ref{subfig:DS_curved_fil_order6} that all filtered schemes achieve the expected convergence rates. Therein, the corresponding unfiltered schemes with the same convergence rates are also plotted for comparison. We find that the absolute errors of the filtered schemes are about two orders of magnitude smaller than those of their unfiltered counterparts. 

\begin{figure}[!htbp]
  \centering
  \subfloat[]{\includegraphics[width=0.45\textwidth]{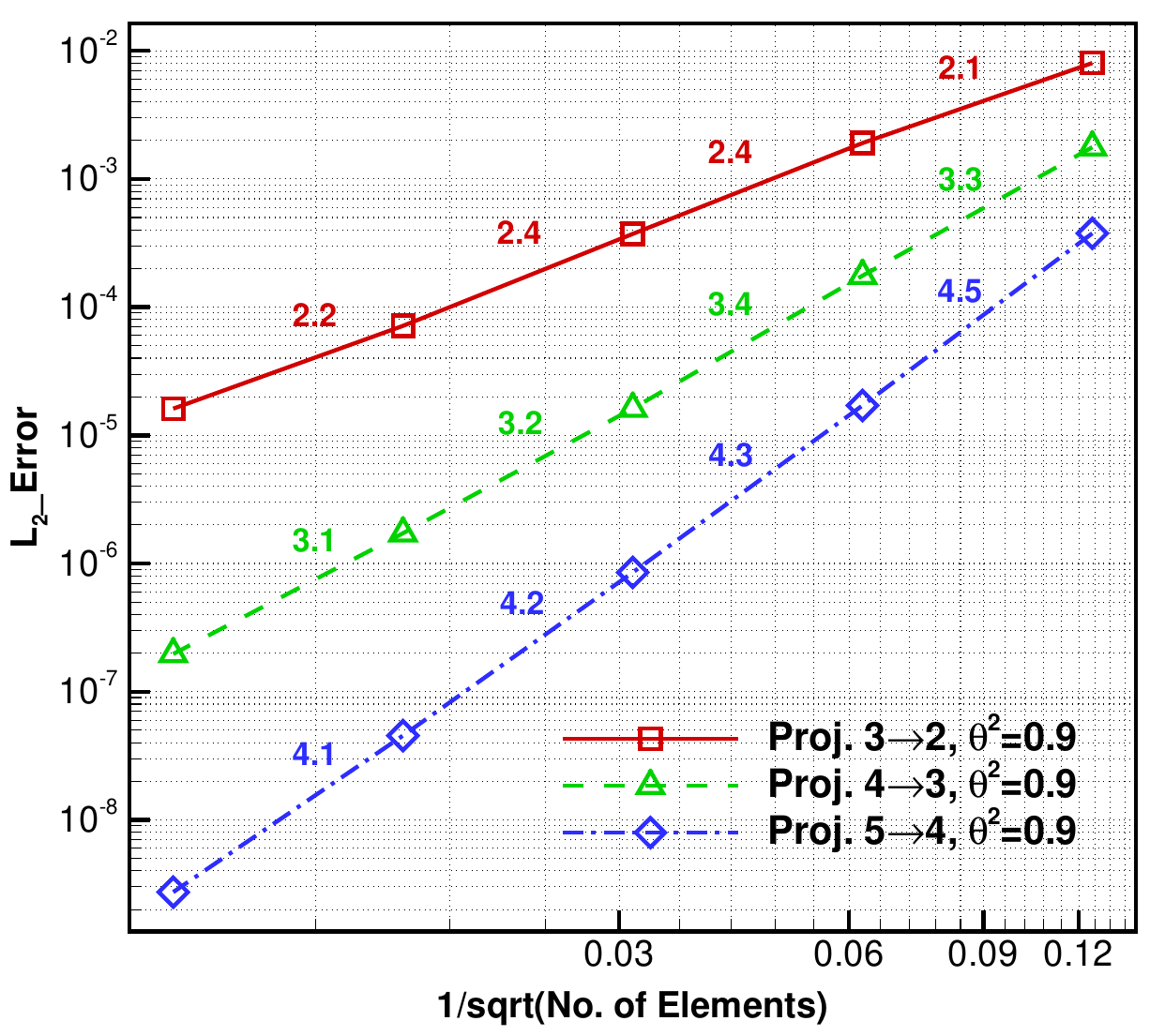}}
\hspace{0.2em}
  \subfloat[]{\includegraphics[width=0.45\textwidth]{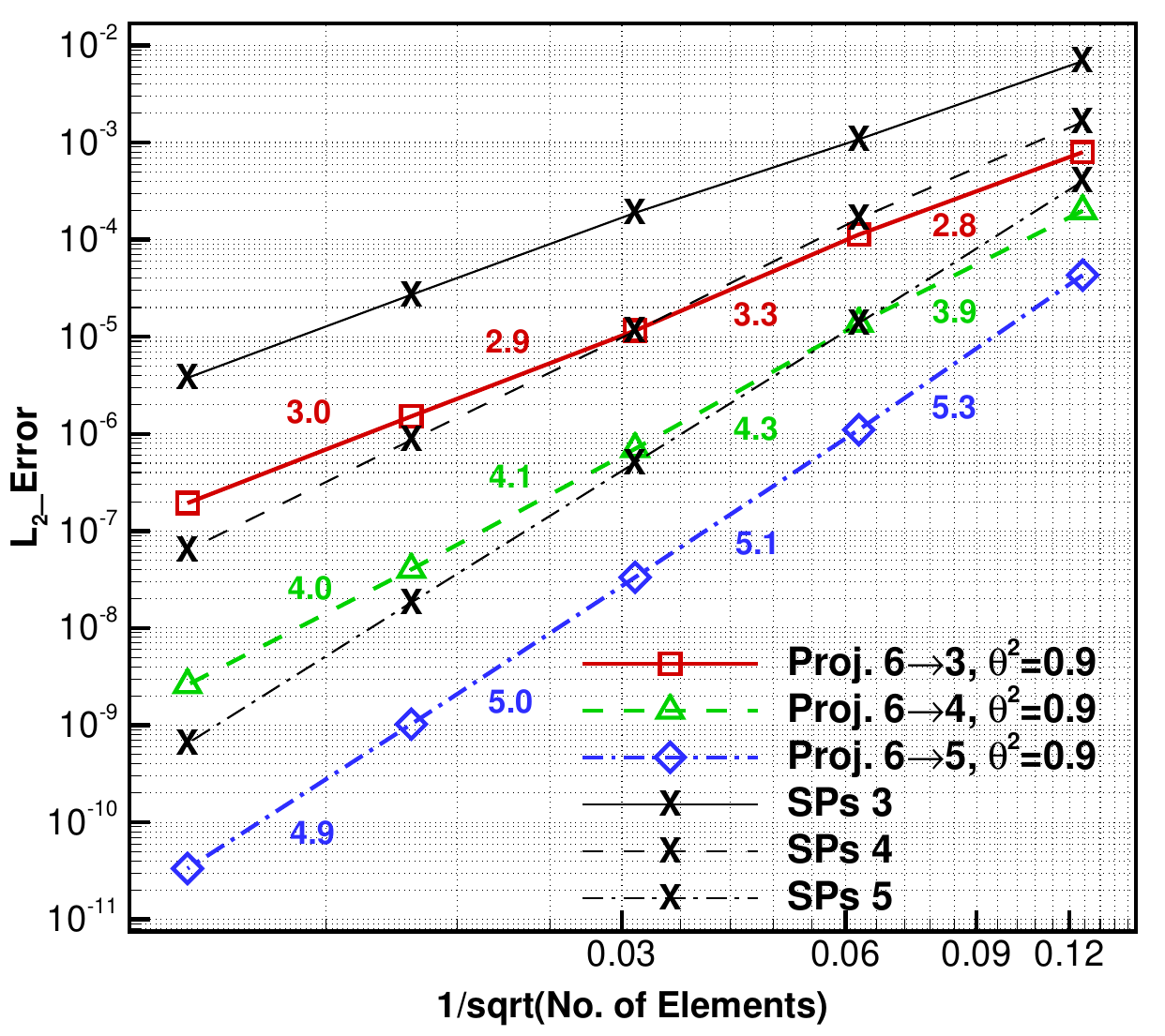}
  \label{subfig:DS_curved_fil_order6}
  }
    \caption{(a) Spatial convergence rates for spatial constructions using 3 to 5 solution points in each spatial dimension with the filtering parameter $\theta^2=0.9$. (b) Spatial convergence rates for the spatial construction using 6 solution points in each spatial dimension with the project-based polynomial filtering to $\mathbb{Q}^2$, $\mathbb{Q}^3$, and $\mathbb{Q}^4$ spaces and with the filtering parameter $\theta^2=0.9$.}
  \label{fig:DS_curved_fil}
\end{figure}

\begin{figure}[!htbp]
  \centering
  \includegraphics[width=0.45\textwidth]{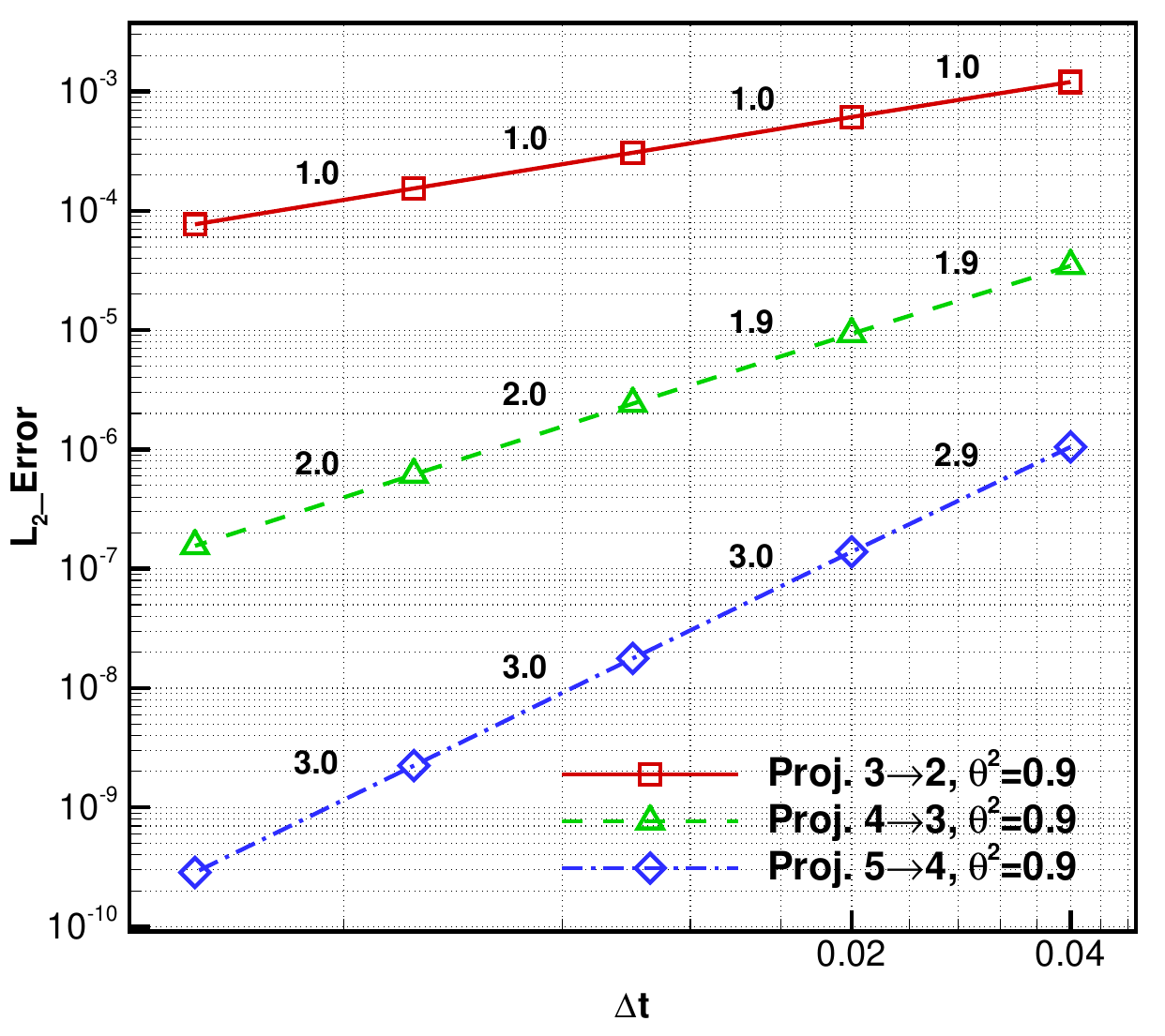}
    \caption{Global temporal convergence rates for temporal constructions using 3, 4, and 5 solution points in the time dimension with the filtering parameter $\theta^2=0.9$.}
  \label{fig:DS_Curved_Time_Fil}
\end{figure}

Finally, we present results in the order of accuracy studies of the moving/deforming circular case with the motion described by Eq.~\eqref{eq:mesh_motion_asym_def} in Figure~\ref{fig:Cylinder_curved}. Similar to the linear grid case (see Figure~\ref{fig:Cylinder_bilinear_space}), the spatial convergence rates for all STFR schemes reach their optimal rates. As shown in Figure~\ref{fig:Cylinder_curved_time}, we divide the $\Delta t$--Error diagram into three regions. Compared to Figure~\ref{fig:Cylinder_bilinear_time}, the area of Region ``A" in Figure~\ref{fig:Cylinder_curved_time} is enlarged, where the superconvergence rates of the schemes with 2, 3, and 4 temporal solution points are converging towards their optimal values. This is due to the reduced motion trajectory representation error when curvilinear space-time elements are used. Nevertheless, compared to the physical flow field error, the motion trajectory representation error becomes relatively large again in Region ``C", and the superconvergence rates start to degrade in this region. 

%
%

\begin{figure}[!htbp]
  \centering
  \subfloat[Spatial Convergence]{\includegraphics[width=0.45\textwidth]{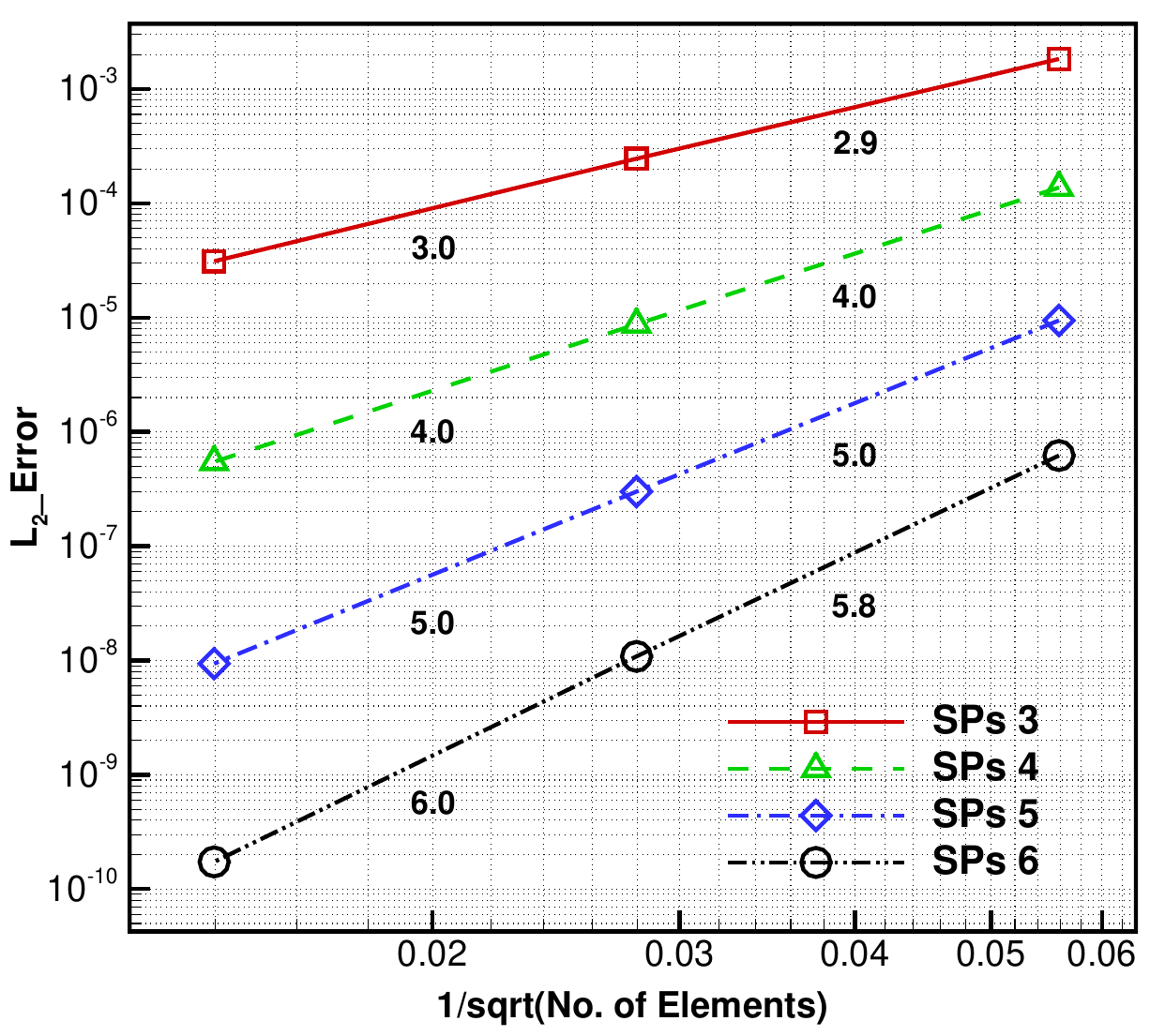}}
\hspace{0.2em}
  \subfloat[Temporal Convergence]{\includegraphics[width=0.45\textwidth]{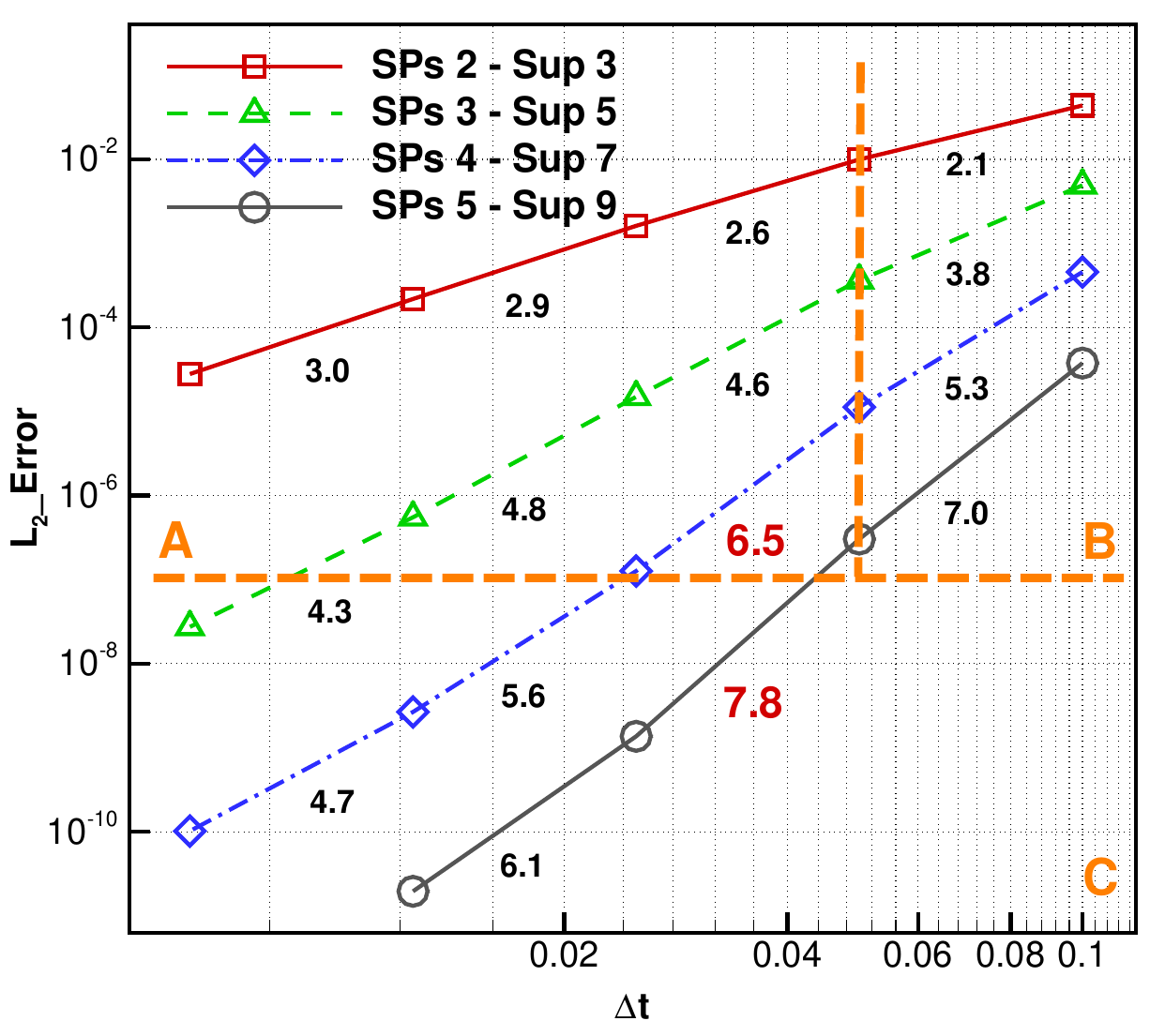}
  \label{fig:Cylinder_curved_time}}
    \caption{(a) Spatial convergence rates for spatial constructions using 3 to 6 solution points in each spatial dimension; and (b) temporal convergence rates for temporal constructions using 2 to 5 solution points in the time dimension for the 2D linear wave equation on a moving and deforming circular domain tessellated with quadratic space-time elements.}
  \label{fig:Cylinder_curved}
\end{figure}

\begin{Observation}
Curvilinear space-time elements can better capture the geometric features and motion of moving/deforming grids compared to their linear counterpart, with the trade-off of accumulated aliasing errors.  When aliasing errors due to insufficient spatial resolution dominate, they can be effectively controlled by spatial projection-based polynomial filtering. Aliasing errors due to the insufficient temporal resolution seem not to cause seriouss accuracy deterioration. Similarly to the linear space-time element cases, temporal filtering can make the filtered schemes fully lose temporal superconvergence, and present a regular global convergence rate. 
\end{Observation}

\subsection{Discussions} 
\label{subsec:Results_Dis}
From the numerical tests presented in \textbf{Sect.}~\ref{subsec:MovingLinear} and~\ref{subsec:MovingCurved}, we observed that schemes violating \textbf{Theorem}~\ref{pro:moving} and \textbf{Proposition}~\ref{pro:moving_reduced} do not substantially destroy the flow simulation accuracy, and when the scheme's order of accuracy improves, their accuracy is similar to that of the schemes which satisfy \textbf{Theorem}~\ref{pro:moving} or \textbf{Proposition}~\ref{pro:moving_reduced}. This is \textit{de facto} the numerical accuracy enhancement feature of the STFR schemes when Eq.~\eqref{eq:STFR_new_phy} is implemented with \textbf{Key Procedures}~\ref{alg:2DFR_key_new}. Similar phenomena have been reported in~\cite{YU201470} (see Section 5.3 in that work). We restate that when a scheme violates \textbf{Theorem}~\ref{pro:moving}, Eq.~\eqref{eq:STFR_new_phy} and~\eqref{eq:STFR_new_ref} are not numerically identical to each other. Violating \textbf{Proposition}~\ref{pro:moving_reduced} (i.e. the discrete GCL) can cause serious accuracy deterioration when Eq.~~\eqref{eq:STFR_new_ref} is implemented with $|J| \vect{Q}_{\vect{h}}$ explicitly approximated as polynomials; see examples from~\cite{YU201470,AbeEtAl_JCP_2015,ABE_EtAl_CF_2016}. As discussed in \textbf{Sect.}~\ref{subsubsec:new_discuss}, aliasing errors still need to be controlled when implementing Eq.~\eqref{eq:STFR_new_phy} with $\vect{Q}_{\vect{h}}$ explicitly approximated as polynomials in the reference domain. Based on the tests presented in this section, polynomial filtering can be an effective way to steer the spatial convergence rate of numerical schemes towards the desired value. Temporal filtering is not recommended considering that it can cause the numerical schemes to fully lose the temporal superconvergence property.

\section{Conclusion and Future Work}
\label{sec:con_FW}

In this study, high-order nodal tensor-product STFR methods based on Gauss--Legendre points have been developed to solve hyperbolic conservation laws on curvilinear moving grids. The complex grid motion and deformation are represented with general curvilinear space-time elements, and numerical resolution requirements to resolve curvilinear geometric features are studied.  A key finding of this study is that the STFR schemes consistently exhibit superconvergence in the time dimension, irrespective of whether the governing equations are linear or nonlinear, or whether the computations are performed on stationary or highly deforming moving grids. The superconvergence rate, however, can be influenced by errors associated with the grid motion representation. The use of curvilinear space-time elements, even when the grid and its motion are represented quadratically, substantially enhances the accuracy of the space-time geometric representation. Nevertheless, a finer numerical resolution than that used for linear space-time grids is required to adequately resolve both the flow field and the geometric features of the curvilinear elements.

A STFR scheme implementation procedure based on a hybrid reference-physics domain approach, i.e. \textbf{Key Procedures}~\ref{alg:2DFR_key_new}, is developed to enhance the accuracy of the numerical schemes. Note that this procedure can preserve the freestream even when the discrete GCL is violated. It has been proved that this procedure is numerically equivalent to the conservative form in the reference domain when \textbf{Theorem}~\ref{pro:moving} is satisfied. Space-time projection-based polynomial filtering has been developed to control aliasing errors of the STFR schemes implemented with the hybrid reference-physics domain approach when their resolution can either not satisfy that required by the discrete GCL (i.e. violating \textbf{Proposition}~\ref{pro:moving_reduced}) or not guarantee the equivalence between the hybrid form and its corresponding conservative form (i.e. violating \textbf{Theorem}~\ref{pro:moving}). A key observation here is that the polynomial filtering procedure can be used to stabilize the spatial order of accuracy of the STFR scheme to any value less than the desired one determined by the spatial solution point setup. However, the temporal superconvergence is totally lost when temporal polynomial filtering is used; instead, a global temporal convergence rate is observed.

Several directions for future work naturally extend from the present study.
First, in practical fluid flow simulations, four-dimensional (4D), i.e. 3D space plus 1D time, space-time grids need to be used. It is not clear whether the temporal superconvergence, an essential feature for numerical efficiency, still holds for general 4D polytopes~\cite{Behr_IJNMF_2008,FRONTIN_EtAl_2021_ANM}. 
Second, much research is still needed to bridge the method of lines and space-time discretization for moving domain simulation with curvilinear grids. Based on some preliminary tests presented in~\cite{Yu_Space_Time_Moving_2024}, very small time steps need to be used when the method of lines is adopted for time integration to match the accuracy obtained by the space-time method. Therefore, another promising future research direction can be identifying equivalent method of lines schemes to space-time ones, such as the DG-type methods for ordinary differential equations reported in~\cite{Huynh_JSC_2023}. As a result, the knowledge created from the space-time methods can be directly transferred to the development of the method of lines. Last but not least, curvilinear space-time (or its equivalent method-of-lines form) moving grid simulation of Navier--Stokes equations can be pursued. The numerical performance between the tensor-product-based schemes and those for irregular 4D polytopes needs to be compared.

\section*{Acknowledgments}
The early stage of this work was sponsored by the Office of Naval Research (ONR) through the award N00014-16-1-2735, and this work was also partially sponsored by the Army Research Office (ARO) through the Cooperative Agreement W911NF2420183. The views and conclusions contained in this document are those of the authors and should not be interpreted as representing the official policies, either expressed or implied, of the U.S. Government. Part of the hardware used in the computational studies is from the UMBC High Performance Computing Facility (HPCF). The facility is supported by the U.S. National Science Foundation through the MRI program (grant nos. CNS-0821258, CNS-1228778, OAC-1726023, and CNS-1920079) and the SCREMS program (grant no. DMS-0821311), with additional substantial support from the University of Maryland, Baltimore County (UMBC). 

\section*{Data Availability}
Data sets generated during the current study are available from the corresponding author on reasonable request. 

\section*{Declarations}

\noindent \textbf{Competing interest} 
The authors declare that they have no known competing financial interests or personal
relationships that could have appeared to influence the work reported in this paper.

\appendix
\section{Proof of Eq.~\eqref{eq:Proj_Coef}} 
\label{app:proj}
The same nomenclature as that adopted in \textbf{Sect.}~\ref{subsubsec:new_discuss} is used here as well as in~\ref{app:interp}. Since Gauss--Legendre quadrature points are used as solution points in STFR, and 1D Lagrange polynomials are generated based on those points, we have the following two fundamental properties:

\begin{itemize}
    \item With $N+1$ Gauss--Legendre quadrature points, a degree $2N+1$ polynomial can be integrated exactly.
    \item The set of Lagrange polynomials based on $N+1$ Gauss--Legendre quadrature points form a complete basis of the polynomial space $P^N$, and the bases are orthogonal to each other. 
\end{itemize}

Take $\phi_{i_1}^H (\xi)$, $i_1 = 1, \ldots, k^H + 1$, a set of degree $k^H$ Lagrange polynomials, as an example. They are based on $k^H + 1$ Gauss--Legendre quadrature points $\xi_{j_1}^H$, and those points can integrate a degree $2k^H+1$ polynomial exactly. Therefore, the product of any two bases $\phi_{i_1}^H (\xi) \phi_{j_1}^H (\xi)$, a degree $2k^H$ polynomial, can be integrated exactly with the $k^H + 1$ Gauss--Legendre quadrature points.   
Moreover, $\phi_{i_1}^H (\xi_{j_1}^H)$ equals to 1 when $i_1 = j_1$, and equals to 0 when $i_1 \neq j_1$. For any two bases $\phi_{i_1}^H (\xi)$ and $\phi_{j_1}^H (\xi)$, we then have their inner product calculated as
\begin{equation}
\begin{aligned}
    \int_{-1}^1 \phi_{i_1}^H (\xi) \phi_{j_1}^H (\xi) d \xi  & =
    \sum_{k_1 = 1}^{k^H+1} \phi_{i_1}^H (\xi_{k_1}^H) \phi_{j_1}^H (\xi_{k_1}^H) \omega_{s,k_1}^H \\
    & =
    \phi_{i_1}^H (\xi_{j_1}^H) \omega_{s,j_1}^H \\
    & = \Biggl\{ 
    \begin{aligned}
    \omega_{s,i_1}^H \quad \text{if} \ \ i_1 = j_1 \\
    0 \quad \text{if} \ \ i_1 \neq j_1
    \end{aligned}.
\end{aligned}
\end{equation}

The same procedures apply to $\phi_{i_1}^L (\xi)$, $i_1 = 1, \ldots, k^L + 1$, a set of degree $k^L$ Lagrange polynomials. Now we check the product of $\phi_{i_1}^H (\xi) \phi_{j_1}^L (\xi)$. It is a degree ($k^H + k^L$) polynomial, and can be integrated exactly with the $k^H + 1$ Gauss--Legendre quadrature points $\xi_{j_1}^H$. Therefore, we have the following formula
\begin{equation}
    \int_{-1}^1 \phi_{i_1}^H (\xi) \phi_{j_1}^L (\xi) d \xi   =
    \sum_{k_1 = 1}^{k^H+1} \phi_{i_1}^H (\xi_{k_1}^H) \phi_{j_1}^L (\xi_{k_1}^H) \omega_{s,k_1}^H 
     =
    \phi_{j_1}^L (\xi_{i_1}^H) \omega_{s,i_1}^H.
\end{equation}

The left side of Eq.~\eqref{eq:Proj_Coef} can then be written as
\begin{equation}
\begin{aligned}
&   \iiint \sum_{i_1=1}^{k^L+1}  
    \sum_{i_2=1}^{k^L+1} \sum_{i_3=1}^{m^L+1} Q^L_{i_1,i_2,i_3} L^L_{i_1,i_2,i_3} (\vect{\xi}) 
    L^L_{j_1,j_2,j_3} (\vect{\xi}) d\vect{\xi} \\
= & \sum_{i_1=1}^{k^L+1}  
    \sum_{i_2=1}^{k^L+1} \sum_{i_3=1}^{m^L+1}
    Q^L_{i_1,i_2,i_3} 
    \int_{-1}^1 \phi_{i_1}^L (\xi) \phi_{j_1}^L (\xi) d \xi
    \int_{-1}^1 \phi_{i_2}^L (\eta) \phi_{j_2}^L (\eta) d \eta
    \int_{-1}^1 \phi_{i_3}^L (\tau) \phi_{j_3}^L (\tau) d \tau \\
= & \sum_{i_1=1}^{k^L+1}  
    \sum_{i_2=1}^{k^L+1} \sum_{i_3=1}^{m^L+1}
    Q^L_{i_1,i_2,i_3}
    \phi_{i_1}^L (\xi_{j_1}^L) 
    \phi_{i_2}^L (\eta_{j_2}^L)
    \phi_{i_3}^L (\tau_{j_3}^L)
    \omega_{s,j_1}^L \omega_{s,j_2}^L \omega_{t,j_3}^L \\
= & Q^L_{j_1,j_2,j_3} \omega_{s,j_1}^L \omega_{s,j_2}^L \omega_{t,j_3}^L.
\end{aligned}
\label{eq:proj_l}
\end{equation}

The right side of Eq.~\eqref{eq:Proj_Coef} is calculated from
\begin{equation}
\begin{aligned}
&   \iiint \sum_{i_1=1}^{k^H+1}  
    \sum_{i_2=1}^{k^H+1} \sum_{i_3=1}^{m^H+1} Q^H_{i_1,i_2,i_3} L^H_{i_1,i_2,i_3} (\vect{\xi}) 
    L^L_{j_1,j_2,j_3} (\vect{\xi}) d\vect{\xi} \\
= & \sum_{i_1=1}^{k^H+1}  
    \sum_{i_2=1}^{k^H+1} \sum_{i_3=1}^{m^H+1} Q^H_{i_1,i_2,i_3} 
    \int_{-1}^1 \phi_{i_1}^H (\xi) \phi_{j_1}^L (\xi) d \xi
    \int_{-1}^1 \phi_{i_2}^H (\eta) \phi_{j_2}^L (\eta) d \eta
    \int_{-1}^1 \phi_{i_3}^H (\tau) \phi_{j_3}^L (\tau) d \tau \\
= & \sum_{i_1=1}^{k^H+1}  
    \sum_{i_2=1}^{k^H+1} \sum_{i_3=1}^{m^H+1}
    Q^H_{i_1,i_2,i_3}
    \phi_{j_1}^L (\xi_{i_1}^H) 
    \phi_{j_2}^L (\eta_{i_2}^H)
    \phi_{j_3}^L (\tau_{i_3}^H)
    \omega_{s,i_1}^H \omega_{s,i_2}^H \omega_{t,i_3}^H.
\end{aligned}
\label{eq:proj_r}
\end{equation}

On combining Eq.~\eqref{eq:proj_l} and Eq.~\eqref{eq:proj_r}, we have Eq.~\eqref{eq:Proj_Coef}.

\section{Proof of Eq.~\eqref{eq:L_Lagrange} and projection-based filtering strength discussion} 
\label{app:interp}

According to the basic principle of interpolation with Lagrange polynomials, $Q^L(\vect{\xi}) \in \mathbb{Q}^{k^L} (\xi, \eta) \otimes P^{m^L} (\tau)$ can be constructed using the basis functions $\phi_{i_1}^H (\xi)$, $\phi_{j_1}^H (\eta)$ and $\phi_{k_1}^H (\tau)$ of the polynomial space $\mathbb{Q}^{k^H} (\xi, \eta) \otimes P^{m^H} (\tau)$ as
\begin{equation}
Q^{',L}(\vect{\xi})  = 
\sum_{i_1=1}^{k^H+1}  \sum_{i_2=1}^{k^H+1} \sum_{i_3=1}^{m^H+1} 
Q^L(\vect{\xi}^H_{i_1,i_2,i_3})
\phi^H_{i_1}(\xi) \phi^H_{i_2}(\eta) \phi^H_{i_3}(\tau).
\label{eq:QL_alt}
\end{equation}

According to Eq.~\eqref{eq:Sol_Lagrange}, we have
\begin{equation*}
    Q^L(\vect{\xi}^H_{j_1,j_2,j_3}) =
    \sum_{i_1=1}^{k^L+1}  \sum_{i_2=1}^{k^L+1} \sum_{i_3=1}^{m^L+1} Q^L_{i_1,i_2,i_3}
    \phi^L_{i_1}(\xi_{j_1}^H) \phi^L_{i_2}(\eta_{j_2}^H) \phi^L_{i_3}(\tau_{j_3}^H).
\end{equation*}

Therefore, Eq.~\eqref{eq:QL_alt} can be written as
\begin{equation}
\begin{aligned}
    Q^{',L}(\vect{\xi}) & = 
\sum_{j_1=1}^{k^H+1}  \sum_{j_2=1}^{k^H+1} \sum_{j_3=1}^{m^H+1} 
\sum_{i_1=1}^{k^L+1}  \sum_{i_2=1}^{k^L+1} \sum_{i_3=1}^{m^L+1} Q^L_{i_1,i_2,i_3}
\phi^L_{i_1}(\xi_{j_1}^H) \phi^L_{i_2}(\eta_{j_2}^H) \phi^L_{i_3}(\tau_{j_3}^H)
\phi^H_{j_1}(\xi) \phi^H_{j_2}(\eta) \phi^H_{j_3}(\tau) \\
& = \sum_{j_1=1}^{k^H+1}  \sum_{i_1=1}^{k^L+1} 
\left( \phi^L_{i_1}(\xi_{j_1}^H) \phi^H_{j_1}(\xi)
    \sum_{j_2=1}^{k^H+1}  \sum_{i_2=1}^{k^L+1} 
\left( \phi^L_{i_2}(\eta_{j_2}^H) \phi^H_{j_2}(\eta)
    \sum_{j_3=1}^{m^H+1}  \sum_{i_3=1}^{m^L+1} Q^L_{i_1,i_2,i_3}
    \phi^L_{i_3}(\tau_{j_3}^H)
    \phi^H_{j_3}(\tau) \right) \right).
\end{aligned}
\label{eq:QL_alt_2}
\end{equation}

Since $k^L < k^H$ and $m^L < m^H$, any Lagrange polynomial $\phi_i^L(*)$ can be expressed by the Lagrange polynomial set $\phi_j^H(*)$. Take $\phi_{i_3}^L(\tau)$ and $\phi_{j_3}^H(\tau)$ as an example. We have
\begin{equation*}
    \phi_{i_3}^L (\tau) =
    \sum_{j_3=1}^{m^H+1}   
    \phi^L_{i_3}(\tau_{j_3}^H)
    \phi^H_{j_3}(\tau).
\end{equation*}
Thus, we further have that 
\begin{equation*}
\begin{aligned}
    \sum_{j_3=1}^{m^H+1}  \sum_{i_3=1}^{m^L+1} Q^L_{i_1,i_2,i_3}
    \phi^L_{i_3}(\tau_{j_3}^H)
    \phi^H_{j_3}(\tau) & = 
    \sum_{i_3=1}^{m^L+1} Q^L_{i_1,i_2,i_3} 
\left( \sum_{j_3=1}^{m^H+1}   
    \phi^L_{i_3}(\tau_{j_3}^H)
    \phi^H_{j_3}(\tau) \right) \\
   & =
   \sum_{i_3=1}^{m^L+1} Q^L_{i_1,i_2,i_3} \phi^L_{i_3}(\tau).
\end{aligned}
\end{equation*}

Therefore, Eq.~\eqref{eq:QL_alt_2} can be simplified as follows
\begin{equation}
\begin{aligned}
    Q^{',L}(\vect{\xi}) & = 
\sum_{j_1=1}^{k^H+1}  \sum_{i_1=1}^{k^L+1} 
\left( \phi^L_{i_1}(\xi_{j_1}^H) \phi^H_{j_1}(\xi)
    \sum_{j_2=1}^{k^H+1}  \sum_{i_2=1}^{k^L+1} 
\left( \phi^L_{i_2}(\eta_{j_2}^H) \phi^H_{j_2}(\eta)
     \sum_{i_3=1}^{m^L+1} Q^L_{i_1,i_2,i_3} \phi^L_{i_3}(\tau)
     \right) \right) \\
    & =
\sum_{j_1=1}^{k^H+1}  \sum_{i_1=1}^{k^L+1} 
\left( \phi^L_{i_1}(\xi_{j_1}^H) \phi^H_{j_1}(\xi)
\sum_{i_3=1}^{m^L+1} \phi^L_{i_3}(\tau)
\left(
\sum_{i_2=1}^{k^L+1} Q^L_{i_1,i_2,i_3}
\left(\sum_{j_2=1}^{k^H+1}  
\phi^L_{i_2}(\eta_{j_2}^H) \phi^H_{j_2}(\eta)  
     \right) \right) \right) \\
    & =
\sum_{j_1=1}^{k^H+1}  \sum_{i_1=1}^{k^L+1} 
\left( \phi^L_{i_1}(\xi_{j_1}^H) \phi^H_{j_1}(\xi)
\sum_{i_3=1}^{m^L+1} \sum_{i_2=1}^{k^L+1}
 Q^L_{i_1,i_2,i_3}
\phi^L_{i_3}(\tau) \phi^L_{i_2}(\eta)  
     \right) \\
     & =
\sum_{i_3=1}^{m^L+1} \sum_{i_2=1}^{k^L+1}
\phi^L_{i_3}(\tau) \phi^L_{i_2}(\eta)
\left( 
\sum_{i_1=1}^{k^L+1} Q^L_{i_1,i_2,i_3}
\left(
\sum_{j_1=1}^{k^H+1} \phi^L_{i_1}(\xi_{j_1}^H) \phi^H_{j_1}(\xi)
\right)
\right) \\
      & =
\sum_{i_3=1}^{m^L+1} \sum_{i_2=1}^{k^L+1} \sum_{i_1=1}^{k^L+1}
 Q^L_{i_1,i_2,i_3}
\phi^L_{i_3}(\tau) \phi^L_{i_2}(\eta) \phi^L_{i_1}(\xi) \\
      & = Q^L(\vect{\xi}).
\end{aligned}
\label{eq:QL_alt_3}
\end{equation}
This proves Eq.~\eqref{eq:L_Lagrange}.

Now the difference $\Delta f(\vect{\xi}) = Q^H (\vect{\xi}) - Q^L (\vect{\xi}) \in \mathbb{Q}^{k^H} (\xi, \eta) \otimes P^{m^H} (\tau)$ can be expressed as
\begin{equation}
  \begin{aligned}
    \Delta f(\vect{\xi}) & = 
    \sum_{i_1=1}^{k^H+1}  \sum_{i_2=1}^{k^H+1} \sum_{i_3=1}^{m^H+1} 
    \left(Q^H(\vect{\xi}^H_{i_1,i_2,i_3}) - Q^L(\vect{\xi}_{i_1,i_2,i_3}^H)
    \right)
    \phi^H_{i_1}(\xi) \phi^H_{i_2}(\eta) \phi^H_{i_3}(\tau)   \\
    & =
    \sum_{i_1=1}^{k^H+1}  \sum_{i_2=1}^{k^H+1} \sum_{i_3=1}^{m^H+1} 
    \Delta f_{i_1,i_2,i_3}^H
    \phi^H_{i_1}(\xi) \phi^H_{i_2}(\eta) \phi^H_{i_3}(\tau).
  \end{aligned}
\end{equation}
Thus, its energy $E_g$ can be evaluated from
\begin{equation}
    E_g =  \iiint \Delta f^2(\vect{\xi}) d \vect{\xi} = 
     \sum_{i_1=1}^{k^H+1}  \sum_{i_2=1}^{k^H+1} \sum_{i_3=1}^{m^H+1}
     \Delta f_{i_1,i_2,i_3}^{H,2}  \omega_{s,i_1}^H \omega_{s,i_2}^H \omega_{t,i_3}^H.
\label{eq:energy_f}
\end{equation}

Similarly, based on Eq.~\eqref{eq:Fil_Lagrange}, the difference between $\bar{Q}^H (\vect{\xi})$ and $Q^L (\vect{\xi})$ is 
\begin{equation}
    \Delta \bar{f}(\vect{\xi}) = \theta
    \sum_{i_1=1}^{k^H+1}  \sum_{i_2=1}^{k^H+1} \sum_{i_3=1}^{m^H+1} 
    \Delta f_{i_1,i_2,i_3}^H
    \phi^H_{i_1}(\xi) \phi^H_{i_2}(\eta) \phi^H_{i_3}(\tau).
\end{equation}
Therefore, following Eq.~\eqref{eq:energy_f}, its energy $\bar{E}_g$ is
\begin{equation}
    \bar{E}_g =  \iiint \Delta \bar{f}^2(\vect{\xi}) d \vect{\xi} = \theta^2
     \sum_{i_1=1}^{k^H+1}  \sum_{i_2=1}^{k^H+1} \sum_{i_3=1}^{m^H+1}
     \Delta f_{i_1,i_2,i_3}^{H,2}  \omega_{s,i_1}^H \omega_{s,i_2}^H \omega_{t,i_3}^H
     =\theta^2 E_g.
\end{equation}

It is clear that a $(1-\theta^2)$ portion of $E_g$ is exactly filtered from $Q^H (\vect{\xi})$ with Eq.~\eqref{eq:Fil_Lagrange}.

\bibliographystyle{elsarticle-num}
\bibliography{summary}

\end{document}